\documentclass[a4paper,11pt]{amsart}
\usepackage[utf8]{inputenc}

\usepackage{enumitem}

\usepackage{a4wide}

\usepackage{orcidlink}
\usepackage{amsmath,amssymb}
\usepackage{bm}
\usepackage{multirow}
\usepackage{algpseudocode}
\usepackage{algorithm2e}

\usepackage{hyperref}

\usepackage[misc]{ifsym} 

\usepackage{color}
\usepackage{graphicx}
\usepackage{subcaption}

\usepackage{amsfonts}
\usepackage{amsthm}
\usepackage{mathtools}
\mathtoolsset{showonlyrefs=true}

\usepackage[mathscr]{eucal}

\usepackage{pgfplots}
\pgfplotsset{compat=1.18}

\usepackage{soul}

\usepackage{bbm}
\ifpdf
\DeclareGraphicsExtensions{.eps,.pdf,.png,.jpg}
\else
\DeclareGraphicsExtensions{.eps}
\fi

\newtheorem{remark}{Remark}

\allowdisplaybreaks

\usepackage{mathtools}

\providecommand{\E}[1]{{\ensuremath{\mathbb{E}}\mspace{-2mu}\left[#1\right]}}    
\providecommand{\var}[1]{{\ensuremath{\mathrm{Var}}\mspace{-2mu}\left[#1\right]}}

\DeclarePairedDelimiter\abs{\lvert}{\rvert}%
\DeclarePairedDelimiter\norm{\lVert}{\rVert}%
\DeclarePairedDelimiter\ceil{\lceil}{\rceil}%
\DeclarePairedDelimiter\floor{\lfloor}{\rfloor}%

\newtheorem{Definition}{Definition}

\newtheorem{Theorem}{Theorem}

\newtheorem{Lemma}{Lemma}

\newcommand{\RN}[1]{%
	\textup{\uppercase\expandafter{\romannumeral#1}}%
}

\renewcommand{\le}{\leqslant}
\renewcommand{\ge}{\geqslant}
\renewcommand{\leq}{\leqslant}
\renewcommand{\geq}{\geqslant}
\renewcommand{\emptyset}{\varnothing}

\newcommand{\natu}{\mathbb{N}}

\newcommand{\bsk}{\boldsymbol{k}}

\newcommand{\bsx}{\boldsymbol{x}}

\newcommand{\bszero}{\boldsymbol{0}}
\newcommand{\bsone}{\boldsymbol{1}}

\newcommand{\EE}{\mathbb{E}}
\newcommand{\PP}{\mathbb{P}}

\newcommand{\rank}{\mathrm{rank}}

\newcommand{\bwal}[1]{ {}_{b}\mathrm{wal}_{#1} }

\newcommand{\med}{\mathrm{med}}

\newcommand{\supp}{\mathrm{supp}}

\title{Quasi-Monte Carlo with A Hankel Random Digital Net}

\begin{document}

\author[T.~Goda]{Takashi Goda}
\address[T.~Goda]{Graduate School of Engineering, The University of Tokyo, 7-3-1 Hongo, Bunkyo-ku, Tokyo 113-8656,
Japan} \email[]{goda@frcer.t.u-tokyo.ac.jp}
\thanks{The work of T.G.\ was supported by JSPS KAKENHI Grant Numbers JP23K03210 and JP26K00620.}

\author[Y.~Liu]{Yang Liu}
\address[Y.~Liu]{CEMSE, King Abdullah University of Science and Technology, Thuwal, Saudi Arabia} \email[]{yang.liu.3@kaust.edu.sa}

\author[R.~Tempone]{Raúl Tempone}
\address[R.~Tempone]{CEMSE, King Abdullah University of Science and Technology, Thuwal, Saudi Arabia} \email[]{raul.tempone@kaust.edu.sa}
\thanks{The work of R.T.\ was supported by the Alexander von Humboldt Foundation and the King Abdullah
University of Science and Technology (KAUST) office of sponsored research (OSR) under Award
No. OSR-2019-CRG8-4033}

\begin{abstract}
	This paper proposes a new randomized design of digital nets in which the generating matrices are chosen to be random Hankel matrices. Compared with previous randomized design of digital nets, this approach simplifies the construction process and reduces the number of random variables required, while still achieving desirable convergence rates when combined with appropriate estimators. We analyze the properties of the proposed design, derive bounds for Walsh coefficients, and provide error analysis for both the median-of-means estimator and a newly proposed greedy selection estimator, i.e. the selection of the best design from a batch in terms of a worst-case error bound. Numerical experiments validate our theoretical findings and demonstrate the practical performance of the proposed methods.

\end{abstract}

\subjclass{65C05, 65D30, 65D32}

\keywords{Numerical integration, quasi-Monte Carlo, digital nets, Hankel matrix, median-of-means, Walsh analysis}

\maketitle

\sloppy

\section{Introduction}

Digital nets are an important class of low-discrepancy sequences that are widely used in quasi-Monte Carlo methods for numerical integration. The concept of digital nets originated from the work of Sobol' \cite{Sob67}, is later generalized by Niederreiter \cite{Nie87,Nie92}, leading to the development of $(t,m,s)$-nets and $(t,s)$-sequences. Optimized construction of the digital nets in base 2 in terms of two-dimensional projections is studied by Joe and Kuo~\cite{JK08}, which is also widely implemented in various software packages up to dimension 21,201. 


Recently, there has been growing interest in randomized designs of digital nets, where the generating matrices are chosen randomly. This approach can simplify the construction process and provide good average-case performance---as well as probabilistic worst-case error guarantees---without the need for intricate deterministic designs. Goda and L'Ecuyer~\cite{GL22} and Goda, Suzuki, and Matsumoto~\cite{GSM24} proposed a randomized design of (higher-order) polynomial lattice rules, a special class of digital nets, by randomly selecting numerator and denominator polynomials from specific admissible sets. Pan and Owen~\cite{PO23,PO24} introduced a \emph{uniform random design} (URD) of base-$2$ generating matrices, where each entry in the matrix is independently chosen from $\{0, 1\}$. Both approaches require no construction parameters and achieve desirable convergence rates when combined with the median-of-means estimator. 

In this work, we propose a randomized design of digital nets in which the entries of generating matrices are chosen randomly to form a Hankel matrix structure. We refer to this construction as the \emph{Hankel random design} (HRD) throughout this paper. This approach serves as a bridge between the randomized polynomial lattice rules \cite{GL22,GSM24} and the URD \cite{PO23,PO24}. While the former's generating matrices are inherently Hankel but constrained by the algebraic properties of the chosen polynomials, and the latter's are entirely unstructured, our design exploits the Hankel structure to maintain a degree of algebraic regularity while allowing for greater flexibility through randomness. Consequently, compared to the previously mentioned random designs, our approach is simpler to implement, involves fewer random variables, and improves certain error bounds.

The rest of the paper is organized as follows. In Section~\ref{sec:back}, we introduce the necessary background and notation for digital nets and their randomizations. In Section~\ref{sec:random-design-digital-net}, we present our proposed HRD, analyze its properties, compare with the uniform random design and revisit the linear matrix scrambling. In Section~\ref{sec:walsh_bounds} we derive the bounds for Walsh coefficients, which are used to the error analysis in Section~\ref{sec:convergence_rates} for both the median-of-means estimator and newly proposed best design from a batch estimator. Finally, we present numerical experiments in Section~\ref{sec:numerical_results} to validate our theoretical findings and demonstrate the practical performance of the proposed methods.

\section{Background and notation}\label{sec:back}

\subsection{Notation}
We introduce some notations that we use throughout the paper. 
\begin{itemize}
    \item Let $\mathbb{N}$ be the set of positive integers, $\mathbb{N}_0 = \mathbb{N} \cup \{0\}$, $\mathbb{N}_0^s = \mathbb{N}_0 \times \cdots \times \mathbb{N}_0$ ($s$ times) and $\mathbb{N}_*^s = \mathbb{N}_0^s \setminus \{\bm{0}\}$. 
    \item $\mathbb{F}_b$ denotes the finite field of order $b$, where $b$ is a fixed prime number. $\mathbb{F}_b^{\times}$ denotes the set of non-zero elements in $\mathbb{F}_b$. We identify $\mathbb{F}_b$ with the set $\{0,1,\ldots,b-1\}$ equipped with addition and multiplication modulo $b$. 
    \item $(t, m, s)$ are the parameters of the digital net. 
    \item $E\in \natu \cup \{\infty\}$ is the precision of the digital net. In the case of base $b = 2$ with standard IEEE 754 double precision, we typically set $E = 53$, consisting of the 52-bit fractional part and the one implicit leading bit.
    \item $C_1, ..., C_s \in \mathbb{F}_b^{E \times m}$ are the generating matrices. We use the notation $C_j(1{:}q_j, :)$ to denote the first $q_j$ rows of the matrix $C_j$, for $j = 1, ..., s$. $C_j(1{:}q_j, :) = \varnothing$, when $q_j = 0$. For $\bm{q} = (q_1, ..., q_s) \in \{0, 1, \dotsc, E\}^s$, denote $C_{\bm{q}}$ to define the vertically stacked matrix:
    \begin{equation}
        C_{\bm{q}} = \begin{bmatrix}
            C_1(1{:}q_1,:)\\
            \vdots\\
            C_s(1{:}q_s,:)
        \end{bmatrix}
        \in \mathbb{F}_b^{\abs{\bm{q}}_1 \times m}.
    \end{equation}
    \item For $k \in \mathbb{N}$ with $k = \sum_{i=1}^{\nu} k_{[i]} b^{i - 1}$ and $k_{[\nu]}\ne 0$, we denote $\vec{k} = (k_{[1]}, k_{[2]}, \dotsc, k_{[\nu]})^{\top} \in \mathbb{F}_b^{\nu}$. For a precision $E\in \natu \cup \{\infty\}$, we also write 
    \[ \vec{k}_E = (k_{[1]}, k_{[2]}, \dotsc, k_{[E]})^{\top} \in \mathbb{F}_b^{E},\]
    where $k_{[i]}=0$ if $i>\nu$.
    Let $\kappa(k) = \{i \in \mathbb{N} : k_{[i]} \neq 0\}$ be the set of positions of non-zero digits in the base-$b$ expansion of $k$. We use $k_{(\alpha)}$ to denote the $\alpha$-th largest element of $\kappa$, setting $k_{(\alpha)} = 0$ if $\alpha > \abs{\kappa}$. 
    \item Niederreiter--Rosenbloom--Tsfasman (NRT) and Dick weights \cite{Nie86,RT97,Dic08}: For $k \in \mathbb{N}$, we define $\mu_1(k) = k_{(1)}$ with $\mu_1(0) = 0$. For $\alpha \in \mathbb{N}$, the higher-order weight is given by $\mu_{\alpha}(k)= \sum_{i=1}^{\alpha} k_{(i)}$. For a vector $\bm{k} \in \mathbb{N}^s_{0}$, we define $\mu_{\alpha}(\bm{k})=\sum_{j=1}^{s}\mu_{\alpha}(k_j)$.
    \item We vectorize the above notation: $\bm{\mu}_{\alpha} (\bm{k}) = (\mu_{\alpha} (k_1), \dotsc,  \mu_{\alpha} (k_s))$. 
\end{itemize}


\subsection{Digital net construction and randomizations}
In this section, we briefly discuss the digital net construction and randomization. For a more comprehensive treatment of digital nets and sequences, we refer the reader to the monographs \cite{DP10,Nie92}.


Let $b$ be a fixed prime number, and let $m, s$ be positive integers. A digital net is a point set of $N=b^m$ points in $[0,1)^s$ constructed using linear algebra over the finite field $\mathbb{F}_b$. The construction is defined by $s$ generating matrices $C_1, \dots, C_s \in \mathbb{F}_b^{E \times m}$, where $E$ is the precision and $m$ determines the quadrature size, typically $E \geq m$. 

Now we describe the procedure to generate the $j$-th coordinate of the $n$-th point $\bm{x}_n$, denoted by $x_{n,j}$, where $j = 1, \dots, s$ and $n = 0, \dots, b^m - 1$. We first represent $n$ in its base-$b$ expansion:
\begin{equation}
  n = n_0 + n_1 b + \cdots + n_{m-1} b^{m-1},
\end{equation}
which corresponds to a vector $\vec{n}_m = (n_0, \dots, n_{m-1})^{\top} \in \mathbb{F}_b^m$. Then, we compute a vector of digits $\vec{y}_{n,j} = (y_{n,j,1}, \dots, y_{n,j,E})^{\top} \in \mathbb{F}_b^E$ via the matrix-vector multiplication over $\mathbb{F}_b$:
\begin{equation}
    \label{eq:digital-net-point}
  \vec{y}_{n,j} = C_j \vec{n}_m.
\end{equation}
Here, if $E=\infty$, we assume that every column of the generating matrices has only finitely many non-zero entries to ensure the well-definedness of this matrix-vector multiplication. Finally, these digits are used to form the coordinate $x_{n,j}$ by mapping them to a real number in $[0,1)$:
\begin{equation}
  x_{n,j} = \sum_{k=1}^{E} y_{n,j,k} b^{-k}.
\end{equation}
Applying the above procedure to all $j = 1, \dotsc, s$ and $n = 0, \dotsc, b^{m} -1$ yields the resulting digital net $\{\bm{x}_0, \dots, \bm{x}_{b^m-1}\}$.

As mentioned in the introduction, recent works explore randomized construction of digital nets through randomly chosen generating matrices $C_1, \dotsc, C_s$.  Goda and L'Ecuyer~\cite{GL22} and Goda, Suzuki, and Matsumoto~\cite{GSM24} proposed random designs of (higher-order) polynomial lattice rules, a special class of digital nets proposed by Niederreiter~\cite{Nie87} and then generalized by Dick and Pillichshammer~\cite{DP07}. In particular, the numerator and denominator polynomials are randomly selected from a specific admissible set. However, the requirement that the denominator polynomial be irreducible makes this approach less straightforward. Pan and Owen~\cite{PO23,PO24} consider a URD of base-$2$ generating matrices: each entry in the matrices is randomly selected in $\{0, 1\}$. This design is more straightforward to implement. Notably, when combined with the median-of-means estimator, these random designs achieve nearly optimal convergence rates for smooth functions. A significant advantage of this approach is its universality with respect to smoothness: it attains these rates without prior knowledge of the function's smoothness, which is typically required in existing constructions of optimal deterministic/randomized digital nets \cite{Dic08,Dic11,DP07,God15,GD15,GSY18}.

The random design we introduce in this work combines the algebraic structure of polynomial lattice rules with the implementation simplicity of uniform random designs. We describe the details of this construction and its theoretical properties in the next section.

\section{Randomized design of digital nets}
\label{sec:random-design-digital-net}

In this section, we first introduce our HRD of digital nets. Then, we analyze its properties and compare it with the URD. Lastly, we revisit the properties of linear matrix scrambling \cite{Mat98} and relate them to Owen-scrambling \cite{Owe95,Owen97}.

\subsection{Hankel random design of digital nets}

We describe the proposed randomized design of digital nets. We present the procedure in one dimension as the construction is identical and independent across dimensions.

We generate the entries of the generating matrix $C \in \mathbb{F}_b^{E \times m}$ using a random vector $\bm{u} = (u_{1}, u_{2}, \dotsc, u_{E + m - 1}) \in \mathbb{F}_b^{E + m - 1}$, where each component $u_i$ is chosen independently and uniformly from $\mathbb{F}_b$. The entries of $C$ are given by
\begin{equation}
  c_{j,r} = u_{r+j-1},
\end{equation}
for $1\le j\le E$ and $1\le r\le m$. Notice that $C$ has a Hankel structure that also arises in polynomial lattice rules. Then, the point construction, as in Equation~\eqref{eq:digital-net-point}, can be written as
\begin{equation}
  \label{eq:hankel_u}
  \begin{bmatrix}
  u_{1} & u_{2} & \cdots & u_{m} \\
  u_{2} & u_{3} & \cdots & u_{m+1} \\
  \vdots  & \vdots  & \ddots & \vdots  \\
  u_{E} & u_{E+1} & \cdots & u_{E+m-1}
  \end{bmatrix}
  \begin{bmatrix}
  n_{0}  \\
  n_{1}  \\
  \vdots \\
  n_{m-1} 
  \end{bmatrix}
  =
  \begin{bmatrix}
  y_{1}  \\
  y_{2}  \\
  \vdots \\
  y_{E} 
  \end{bmatrix}
\end{equation}

Similar to linear matrix scrambling \cite{Mat98}, we further introduce a random digital shift $\vec{d}_j \in \mathbb{F}_b^E$, drawn independently and uniformly from $\mathbb{F}_b^E$ for each $j = 1, \dotsc, s$. This shift is applied to all points:
\begin{equation}
    \label{eq:digital-net-point-with-shift}
    \vec{y}_{n,j} = C_j \vec{n}_m \oplus \vec{d}_j,
\end{equation}
where $\oplus$ denotes addition over $\mathbb{F}_b$. This additional randomization ensures that each individual point $\bm{x}_n$ is uniformly distributed in $[0, 1)^s$ for $E = \infty$ (or $b^{-E}$-equidistributed for finite $E$) marginally, while preserving the structural properties of the digital net collectively. A fast implementation of the construction procedure is presented in Appendix~\ref{sec:fast_implementation_digital_nets}.

With the digital net $\{\bsx_n\}$ generated randomly by~\eqref{eq:digital-net-point-with-shift}, the sample average
\begin{equation}
  Q_{N}(f)
  \;=\;
  \frac{1}{b^m}\sum_{n=0}^{b^m-1} f(\bm{x}_n).
  \label{eq:poly-lat-rule}
\end{equation}
is an unbiased estimator of the integral
\[
I(f) = \int_{[0, 1)^s} f(\bm{x})\,\mathrm{d}\bm{x},
\]
for any $f\in L^1([0,1)^s)$, where we write $N=b^m$.

Before analyzing the integration error, we introduce the Walsh functions in base $b$.
\begin{Definition}
    For an integer base $b\ge 2$, let $\omega_b = e^{2\pi i / b}$ be the primitive $b$-th root of unity. For a non-negative integer $k\in \natu_0$ with $b$-adic expansion $k=\sum_{i=1}^{\infty}k_{[i]}b^{i-1}$ (where $k_{[i]}\in \mathbb{F}_b$ and the sum is finite) and a real number $x\in [0,1)$ with $b$-adic expansion $x=\sum_{i=1}^{\infty}x_{[i]}b^{-i}$ (unique under the condition that infinite trails of $b-1$ are not allowed), the $k$-th Walsh function is defined as
    \[ \bwal{k}(x) \coloneqq \omega_b^{k_{[1]}x_{[1]}+k_{[2]}x_{[2]}+\cdots}.\]
    For dimension $s\ge 1$, $\bm{k}=(k_1,\ldots,k_s)\in \natu_0^s$, and $\bm{x}=(x_1,\ldots,x_s)\in [0,1)^s$, the multidimensional Walsh function is defined as the product
    \[ \bwal{\bm{k}}(\bm{x})\coloneqq \prod_{j=1}^{s}\bwal{k_j}(x_j). \]
\end{Definition}
It is known that the set $\{\bwal{\bm{k}}\mid \bm{k}\in \natu_0^s\}$ forms a complete orthonormal basis of $L_2([0,1)^s)$; see, e.g., \cite[Appendix~A]{DP10}.
Accordingly, for a function $f\in L_2([0,1)^s)$, the $\bm{k}$-th Walsh coefficient is given by
\[ \hat{f}(\bm{k}) = \int_{[0,1)^s}f(\bm{x})\, \overline{\bwal{\bm{k}}(\bm{x})}\,\mathrm{d}\bm{x}, \]
and we have the following equality in the $L_2$ sense:
\[ f(\bm{x}) \sim \sum_{\bm{k}\in \natu_0^s}\hat{f}(\bm{k})\, \bwal{\bm{k}}(\bm{x}).\]

Now, assuming that $f$ can be represented by its pointwise absolutely convergent Walsh series, for any realization of the randomized digital net with a random digital shift, the integration error can be expressed as
\begin{equation}
\label{eq:error-digital-net}
\begin{split}
I(f)-Q_N(f) 
&= I(f) - \frac{1}{b^m}\sum_{n=0}^{b^m-1} f(\bm{x}_n) \\
&= I(f) - \frac{1}{b^m}\sum_{n=0}^{b^m-1} \sum_{\bm{k} \in \mathbb{N}_0^s} \hat{f}(\bm{k}) \omega_b^{\sum_{j=1}^s \vec{k}_j^{\top} \cdot (C_j \vec{n} + \vec{d}_j)} \\
&= -\sum_{\bm{k} \in \mathbb{N}_0^s} \hat{f}(\bm{k}) \ \omega_b^{\sum_{j=1}^s \vec{k}_j^{\top} \vec{d}_j} \ \frac{1}{b^m}\sum_{n=0}^{b^m-1}   \omega_b^{\sum_{j=1}^s \vec{k}_j^{\top} C_j \vec{n}}\\
&= -\sum_{\bm{k} \in \mathcal{D}^\perp \setminus \{\bm{0} \} }\hat{f}
( \bm{k} ) \ \omega_b^{\sum_{j=1}^s \vec{k}_j^{\top} \vec{d}_j},
\end{split}
\end{equation}
where the dual net $\mathcal{D}^\perp$ is defined as
\[
\mathcal{D}^\perp = \mathcal{D}^\perp(C_1,\ldots,C_s) \;=\;
\left\{\bm{k}\in\mathbb{N}_0^s:\,\sum_{j=1}^s {C}_{j}^{\top} \vec{k}_{j,E}=\bszero \in \mathbb{F}_b^m\right\}.
\]
The last equality in \eqref{eq:error-digital-net} follows from the character property of digital nets \cite{DP10}, where the inner sum vanishes unless $\bsk\in \mathcal{D}^{\perp}$. Note that the integration error in \eqref{eq:error-digital-net} vanishes in expectation over the random digital shift $d_j$, consistent with the unbiasedness of $Q_N(f)$. 

The next lemma gives the probability that a fixed frequency $\bm{k} \neq \bszero$ belongs to the dual net $\mathcal{D}^\perp$ with respect to the random choice of the Hankel matrices $C_1,\ldots,C_s$. 

\begin{Lemma}\label{lem:prob_each_k}
Consider the HRD of digital nets with the infinite-precision generating matrices $C_j \in \mathbb{F}_b^{\infty\times m}$ for $j=1,\ldots,s$. For any fixed $\bm{k} \in \mathbb{N}_*^s$, we have
\begin{equation}
  \Pr (\bm{k} \in \mathcal{D}^\perp) = \Pr\nolimits_{C_1,\ldots,C_s} (\bm{k} \in \mathcal{D}^\perp) = b^{-m}.
\end{equation}
\end{Lemma}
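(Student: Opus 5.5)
Fix $\bm{k}\in\mathbb{N}_*^s$ and choose a coordinate $j_0$ with $k_{j_0}\neq 0$. The plan is to show that the single term $C_{j_0}^\top\vec{k}_{j_0}$ is uniformly distributed on $\mathbb{F}_b^m$. Once this is established, the statement follows by independence. The matrices $C_j$ with $j\neq j_0$ are independent of $C_{j_0}$. Conditionally on them, the sum $v:=\sum_{j\neq j_0}C_j^\top\vec{k}_j$ is a fixed vector in $\mathbb{F}_b^m$. The event $\bm{k}\in\mathcal{D}^\perp$ is then the event $C_{j_0}^\top\vec{k}_{j_0}=-v$, which has probability $b^{-m}$ if $C_{j_0}^\top\vec{k}_{j_0}$ is uniform. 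Averaging over the other matrices keeps the probability at $b^{-m}$.

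For the uniformity, write $k=k_{j_0}$ with digits $k_{[1]},\dots,k_{[\nu]}$ and $k_{[\nu]}\neq0$. Since $E=\infty$ but $\vec{k}$ has only finitely many nonzero entries, only the first $\nu$ rows of $C_{j_0}$ matter. These rows involve the finitely many variables $u_1,\dots,u_{\nu+m-1}$, so there are no well-definedness issues. The $r$-th entry of $C^\top\vec{k}$, for $r=1,\dots,m$, is
\[
w_r=\sum_{i=1}^{\nu}k_{[i]}\,u_{r+i-1}.
\]
This is a linear map $\bm{u}\mapsto \bm{w}=T\bm{u}$ from $\mathbb{F}_b^{\nu+m-1}$ to $\mathbb{F}_b^m$. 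The matrix $T$ is an $m\times(\nu+m-1)$ Toeplitz-type matrix: its $r$-th row is the digit vector of $k$ shifted right by $r-1$ positions.

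The key step is to show that $T$ has full row rank $m$. Look at the columns $\nu,\nu+1,\dots,\nu+m-1$ of $T$. The entry of row $r$ in column $\nu+r'-1$ equals $k_{[\nu+r'-r]}$, which vanishes when $r'<r$ (the digit index would exceed $\nu$) and equals $k_{[\nu]}\neq0$ when $r'=r$. So this $m\times m$ submatrix is triangular with diagonal entries $k_{[\nu]}$, hence invertible over the field $\mathbb{F}_b$.

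Because $T$ is surjective and $\bm{u}$ is uniform on $\mathbb{F}_b^{\nu+m-1}$, each fiber $T^{-1}(\bm{w})$ is a coset of $\ker T$ of size $b^{\nu-1}$. Therefore $\bm{w}$ is uniform on $\mathbb{F}_b^m$. I expect the main care to be needed only in this full-rank check, getting the shift indexing right; the argument uses nothing specific about prime $b$ beyond $\mathbb{F}_b$ being a field.
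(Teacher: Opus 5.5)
Your proof is correct and follows the paper's route: write $C^\top\vec{k}$ as a full-row-rank banded Toeplitz map of $\bm{u}$, conclude uniformity, then combine coordinates by independence. The paper proves full rank via $C(x)K(x)=0\Rightarrow C(x)=0$ in the integral domain $\mathbb{F}_b[x]$ rather than your triangular minor, and your conditioning on a single $j_0$ with $k_{j_0}\neq 0$ is cleaner than the paper's claim that every $C_j^\top\vec{k}_j$ is uniform, which fails when $k_j=0$; one indexing slip is that $k_{[\nu+r'-r]}$ vanishes when $r'>r$ (when the digit index exceeds $\nu$), not when $r'<r$, so your minor is lower triangular with diagonal $k_{[\nu]}$ and the conclusion is unaffected.
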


\begin{proof}
    We first consider the one-dimensional case. Let $k\in \natu$ and choose $n\in \natu$ such that $k < b^n$. Recall the notation $\vec{k}_{\infty} = (k_{[1]}, k_{[2]}, \dotsc, k_{[n]}, 0,\dotsc,0)^{\top} \in \mathbb{F}_b^{\infty}$. We aim to show that $C^{\top} \vec{k}_{\infty}$ is uniformly distributed in $\mathbb{F}_b^{m}$. Due to the Hankel structure of $C$, the product $C^{\top} \vec{k}_{\infty}$ can be equivalently written as $\bm{M}\boldsymbol{u}$, where $\boldsymbol{u}=(u_1,\ldots,u_{m+n-1})^{\top}$ and $\bm{M}\in \mathbb{F}_b^{m\times (m+n-1)}$ is given by 
  \begin{equation}
  \bm{M} = \begin{bmatrix}
    k_{[1]} & k_{[2]} & k_{[3]} & \cdots & k_{[n]} & 0 & 0 &\cdots & 0 \\
    0 & k_{[1]} & k_{[2]} & \cdots & k_{[n-1]} & k_{[n]} & 0 & \cdots & 0 \\
    \vdots & \vdots & \vdots & \ddots & \vdots & \vdots & \ddots & \cdots & \vdots \\
    0 & 0 & 0 & \cdots & k_{[1]} & k_{[2]} & k_{[3]} & \cdots & k_{[n]} \\
  \end{bmatrix}.
\end{equation}
In the following, we show that $\bm{M}$ has full row rank $m$.

Suppose there exists a vector $\bm{c}=(c_0,\ldots,c_{m-1})^{\top} \in \mathbb{F}_b^m$ such that $\bm{c}^{\top}\bm{M} = \bm{0}$. Let
\begin{equation}
  K(x) = \sum_{i=1}^{n} k_{[i]} x^{i}\quad \text{and}\quad C(x) = \sum_{j=0}^{m-1} c_j x^{j}
\end{equation}
be polynomials in $\mathbb{F}_b[x]$. The condition $\bm{c}^{\top}\bm{M} = \bm{0}$ is equivalent to stating that all coefficients of the product polynomial $P(x)=C(x)K(x)$ are zero. Since $\mathbb{F}_b[x]$ is an integral domain and $K(x)\ne 0$ (as $k\ne 0$), the product $C(x)K(x)=0$ implies $C(x)=0$. Thus, $c_0=c_1=\dots=c_{m-1}=0$, proving that $\bm{M}$ has full row rank $m$.

Since $\bm{M}$ is surjective and $\boldsymbol{u}$ is chosen uniformly at random from $\mathbb{F}_b^{m+n-1}$, the linear transformation $\bm{M}\boldsymbol{u}$ is uniformly distributed in $\mathbb{F}_b^m$.
Thus, for each $k \in \mathbb{N}$, $\Pr(C^{\top} \vec{k}_{\infty} = \bszero) = \Pr(\bm{M}\boldsymbol{u} = \bszero) = b^{-m}$. 

In the multidimensional case, since the generating matrices $C_j$ are chosen independently for $j=1,\ldots,s$, the vectors $C_j^{\top} \vec{{k}}_{j,\infty}$ are independent and each is uniformly distributed in $\mathbb{F}_b^m$. Since the sum of independent uniform random variables over a finite group is also uniform, we conclude that $\Pr(\bm{k} \in \mathcal{D}^{\perp}) = b^{-m}$. This completes the proof.
\end{proof}

Now, we investigate the $L^2$ error (mean square error) of the estimator. Using the error expression in \eqref{eq:error-digital-net} and the independence of $C_j$ and $d_j$, we have: 
\begin{align*}
\mathbb{E}[(I(f)-Q_N(f))^2] &= \mathbb{E} \left[ \sum_{\bm{k}, \bm{\ell} \in \mathbb{N}_*^s } \left( \mathbbm{1}_{\bm{k} \in \mathcal{D}^\perp} \hat{f}
( \bm{k} ) \ \omega_b^{\sum_{j=1}^s \vec{k}_j^{\top} \vec{d}_j} \right) \left({ \mathbbm{1}_{\bm{\ell} \in \mathcal{D}^\perp} \hat{f}
( \bm{\ell} ) \ \omega_b^{\sum_{j=1}^s \vec{\ell}_j^{\top} \vec{d}_j} }\right)^{*} \right]\\
&= \sum_{\bm{k},\bm{\ell} \in \mathbb{N}_*^s } \E{\mathbbm{1}_{\bm{k} \in \mathcal{D}^\perp}\mathbbm{1}_{\bm{\ell} \in \mathcal{D}^\perp}} \hat{f}
( \bm{k} ) \left({\hat{f}
( \bm{\ell} )} \right)^* \, \mathbb{E} \left[ \omega_b^{\sum_{j=1}^s (\vec{k}_j - \vec{\ell}_j)^{\top} \vec{d}_j} \right]\\
& = \sum_{\bm{k} \in \mathbb{N}_*^s } \Pr\nolimits_{C_1,\ldots,C_s} (\bm{k} \in \mathcal{D}^\perp)\,  \abs{\hat{f}
( \bm{k} )}^2\\
&= b^{-m} \sum_{\bm{k} \in \mathbb{N}_*^s } \abs{\hat{f}
( \bm{k} )}^2 = {N}^{-1} \var{f},
\end{align*}
where we used, in the third equality, the fact that the expectation with respect to the digital shift $d_j$ vanishes unless $k_j=\ell_j$ for all $j$, in which case it equals $1$.

This shows that the basic HRD of digital nets only achieves the same variance as the standard Monte Carlo. In the rest of the paper, we introduce additional techniques, such as the median-of-means and greedy optimization, to improve the error rate. Before that, we study probability bounds for a certain event in the next subsection. 

\subsection{The probability of a bad event}
Following Pan and Owen~\cite{PO23,PO24}, certain randomized digital nets exhibit events that dominate the integration error. Particularly, the events correspond to vectors $\bm{k}$ belonging to the dual net whose Walsh coefficients $\abs{\hat{f}(\bm{k})}$ have larger magnitude. Here, we compare the probability of such an event between HRD and URD. 

We define the event 
\[\mathcal{A}_{K} = \bigcup_{\bm{k} \in K} \{ \bm{k} \in \mathcal{D}^\perp \},\] 
where $K \subset \mathbb{N}_*^s$ is a given finite set. A crude upper bound follows from the union bound: $\Pr(\mathcal{A}_{K}) \leq \sum_{\bm{k} \in K} \Pr(\bm{k} \in \mathcal{D}^\perp) = \abs{K} b^{-m}$, where we used Lemma~\ref{lem:prob_each_k} in the last equality. This bound applies to both HRD and URD. To obtain a sharper bound, we study the joint inclusion probabilities of pairs of distinct vectors in the dual net, i.e., $\Pr(\bm{k}_1, \bm{k}_2 \in \mathcal{D}^\perp)$.


First, we consider the event $\{ \bm{k}_1, \bm{k}_2 \in \mathcal{D}^\perp_X \}$ for distinct $\bm{k}_1, \bm{k}_2 \in \mathbb{N}_*^s$, where $X = U$ or $H$ denotes URD or HRD, respectively. For both types of random design, this event is equivalent to 
\begin{equation}
  \sum_{j=1}^s C_j^{\top} K_j = \bm{0}_{m \times 2},
\end{equation}
where $C_j\in \mathbb{F}_b^{\infty\times m}$ is either the URD or HRD generating matrix, $K_j = [\vec{k}_{1, j}, \vec{k}_{2, j}]\in \mathbb{F}_b^{\infty\times 2}$ be the matrix formed by stacking the vectors. 

We let $Y_j = C_j^{\top} K_j$, and define the linear transformation $T_j : \mathbb{F}_b^{\infty \times m} \to \mathbb{F}_b^{m \times 2}$ by $T_j(C_j) = Y_j$. Then each $Y_j$ is uniform on the linear subspace $\mathrm{Im}(T_j)$. Since the matrices $C_j$ are independent, each $Y_j$ is independent and thus
\begin{equation}
  \sum_{j=1}^s C_j^{\top} K_j = \sum_{j=1}^s Y_j
\end{equation}
is uniformly distributed on the linear subspace $V_{\Sigma} := V_1 + V_2 + \cdots + V_s$, where $V_j = \mathrm{Im}(T_j)$. In general, the dimension of $V_{\Sigma}$ satisfies:
\begin{equation}
   \max_j \dim V_j \leq \dim(V_{\Sigma}) \leq \min\left( 2m, \sum_{j=1}^s \dim V_j \right)
\end{equation}

The Chung--Erd\H{o}s bound~\cite{CE52} and Hunter's bound~\cite{Hun76} provide lower and upper bounds for $\Pr(\mathcal{A}_K)$, respectively, using the joint inclusion probabilities: 
\begin{equation}\label{eq:prob-bound_inequality}
\begin{split}
  \frac{\left( \sum_{i \in K} \Pr(\bm{k}_i \in \mathcal{D}^\perp) \right)^2}{\sum_{i \in K} \Pr(\bm{k}_i \in \mathcal{D}^\perp) + \sum_{i_1, i_2 \in K, i_1 \neq i_2} \Pr(\bm{k}_{i_1}, \bm{k}_{i_2} \in \mathcal{D}^\perp)} \leq \Pr(\mathcal{A}_K),\\
    \Pr(\mathcal{A}_K) \leq \sum_{i \in K} \Pr(\bm{k}_i \in \mathcal{D}^\perp) - \max_{T} \sum_{(i_1, i_2) \in T} \Pr(\bm{k}_{i_1}, \bm{k}_{i_2} \in \mathcal{D}^\perp),
\end{split}
\end{equation}
where the maximum is taken over all spanning trees $T$ on the complete graph with the vertex set $K$, and $(i_1, i_2)$ denotes an edge in $T$. 

In the following, we first discuss the one-dimensional case ($s = 1$). For URD, each component in $C_j \in \mathbb{F}_b^{\infty \times m}$ is i.i.d. Thus, 
\begin{equation}
\dim V_j=
\begin{cases}
m, & \text{if }\operatorname{rank}(K_j)=1,\\[4pt]
2m, & \text{if }\operatorname{rank}(K_j)=2,
\end{cases}
\end{equation}
For HRD, the dimension of \(V_j\) depends on the rank of \(K_j\). 
If \(\operatorname{rank}(K_j)=1\), then \(\dim V_j = m\). 
If \(\operatorname{rank}(K_j)=2\), then
\[
m+1 \le \dim V_j \le 2m.
\]
We can show that {$\Pr(\mathcal{A}_K^H) \leq \Pr(\mathcal{A}_K^U)$ as $m \to \infty$.} For simplicity, let us consider $b = 2$. In this case, there is no linearly dependent pair $({k}_{i_1}, {k}_{i_2})$ for distinct $i_1, i_2 \in K$. Then, for URD, $\dim (V_{\Sigma}) = \dim (V_1)= 2m$.
From the inequalities \eqref{eq:prob-bound_inequality}, we have a lower bound
\begin{equation}
  \Pr(\mathcal{A}_K^U) \geq \frac{\abs{K} 2^{-m}}{1 + (\abs{K} - 1)2^{-m-1}},
\end{equation}
and an upper bound
\begin{equation}
  \Pr(\mathcal{A}_K^U) \leq \abs{K} 2^{-m} - (\abs{K} - 1) 2^{-2m}.
\end{equation}
Thus $\Pr(\mathcal{A}_K^U) = {\Theta}\left(\abs{K} 2^{-m}\right)$.

For HRD, suppose that there exist $i_1 = 2^{m_1}, i_2 = 2^{m_2} \in K$, such that $d = \abs*{m_1 - m_2} < m$. Then $\dim (V_{1}) = m+d$ for the pair $(k_{i_1}, k_{i_2})$. Applying the inequalities~\eqref{eq:prob-bound_inequality}, we have an upper bound
\begin{equation}
  \Pr(\mathcal{A}_K^H) \leq \abs{K} 2^{-m} - 2^{-(m+d)} - (\abs{K} - 2) 2^{-2m} \leq \abs{K-2^{-d}} 2^{-m}.
\end{equation}
Then we have the following inequality
\begin{equation}
    \Pr(\mathcal{A}_K^H) \leq \abs{K-2^{-d}} 2^{-m} \leq \abs{K} 2^{-m} \leq \frac{\abs{K} 2^{-m}}{1 + (\abs{K} - 1)2^{-m-1}} \leq \Pr(\mathcal{A}_K^U),
\end{equation}
for $\abs{K} \geq 1$ and $m \geq 0$. This demonstrates the advantage of HRD over URD in one-dimensional case. 

The distinction between HRD and URD is less pronounced in higher dimensions, since in both cases, we have
\begin{equation}
  m \leq \dim (V_{\Sigma}) \leq 2m,
\end{equation}
and the joint inclusion probabilities satisfy
\begin{equation}
  b^{-2m} \leq \Pr(\bm{k}_i, \bm{k}_j \in \mathcal{D}^\perp_X) \leq b^{-m}.
\end{equation}
Nonetheless, our numerical results in Section~\ref{sec:numerical_results} indicate that HRD yields smaller worst-case error bounds than URD in practice.

\subsection{The $t$ parameter}
In this section, we show that the $t$ parameter, a quality measure for the equidistribution properties of digital nets, is small with high probability for the HRD.
Here, the quality parameter $t$ satisfies that $C_{\bm{q}}$ has full row rank whenever $\abs{\bm{q}}_1 \leq m - t$. The definition of the stacked matrix $C_{\bm{q}}$ can be referred in Section~\ref{sec:back}.

In the following, we examine the probability that $C_{\bm{q}}$ has full row rank. First, the union bound leads to
\begin{equation}
  \Pr\left(\rank(C_{\bm{q}}) < \abs{\bm{q}}_1 \right) = \Pr\left( \bigcup_{\bm{y} \in \mathbb{F}_b^{\abs{\bm{q}}_1} \setminus \{\bszero\}  } \left(\bm{y}^{\top} C_{\bm{q}} = \bszero \right)\right) \leq \sum_{\bm{y} \in \mathbb{F}_b^{\abs{\bm{q}}_1} \setminus \{\bszero\}  } \Pr\left( \bm{y}^{\top} C_{\bm{q}} = \bszero \right).
\end{equation}
We now want to show that, for each nonzero column vector $\bm{y}\in \mathbb{F}_b^{\abs{\bm{q}}_1}$,
\[
\Pr\left( \bm{y}^{\top} C_{\bm{q}} = \bszero \right) = b^{-m}.
\] 
Denote $\bm{y}^{\top} = (\bm{y}_1^{\top}, ..., \bm{y}_s^{\top})$, where $\bm{y}_j \in \mathbb{F}_b^{q_j}$. We also denote $\bm{v}_j^{\top} = \bm{y}_j^{\top} C_j(1{:}q_j,:)$ for $j = 1, \dotsc, s$. Then we have 
\begin{equation}
  \bm{y}^{\top} C_{\bm{q}} = \sum_{j=1}^{s} \bm{v}_j^{\top}.
\end{equation}
Notice that if $\bm{y}_j \neq \bszero$, under the randomized design of matrices $C_1, ..., C_s$, each $\bm{v}_j$ is uniformly distributed in $\mathbb{F}_b^{m}$. Thus for $\bm{y}\neq \bszero$, which implies that at least one $\bm{y}_j \neq \bszero$, $j = 1, \dotsc, s$, we have $\sum_{j=1}^{s} \bm{v}_j^{\top}$ is uniformly distributed in $\mathbb{F}_b^m$. Thus,
\begin{equation}
  \Pr\left( \bm{y}^{\top} C_{\bm{q}} = \bszero \right) = b^{-m}.
\end{equation}
Consequently,
\begin{equation}
  \Pr\left(\rank(C_{\bm{q}}) < \abs{\bm{q}}_1 \right) \leq (b^{\abs{\bm{q}}_1} - 1)b^{-m}.
\end{equation}

The probability that $t > t_0$ satisfies
\begin{align*}
    \Pr(t > t_0) &\leq \sum_{\abs{\bm{q}}_1 = m - t_0} \Pr\left(\rank(C_{\bm{q}}) < m - t_0 \right)\\
    &= {{m-t_0+s-1} \choose {s-1}} (b^{\abs{\bm{q}}_1} - 1)b^{-m}\\
    &= {{m-t_0+s-1} \choose {s-1}} \left( b^{-t_0} - b^{-m} \right)
    \leq \frac{m^{s-1}}{(s-1)!} b^{-t_0},
\end{align*}
for $t_0 \geq s-1$. Thus, for instance, if $m\ge b$, we have
\begin{equation}
    \Pr(t > (s-1) \log_b m) \leq \frac{1}{(s-1)!},
\end{equation}
and 
\begin{equation}
    \Pr(t > s \log_b m) \leq \frac{1}{m (s-1)!}.
\end{equation}
These results ensure that the $t$ parameter of the HRD is bounded above by $s\log_bm$ with high probability. This is analogous to the known results for polynomial lattice point sets (see, for instance, \cite[Corollary~10.15]{DP10}). 

\subsection{Linear matrix scrambling revisited}
In this section, we revisit the properties of linear matrix scrambling (LMS)~\cite{Mat98} for digital nets.
With LMS~\cite{Mat98}, the condition $\{\bm{k} \in \mathcal{D}^\perp\}$ is equivalent to
\begin{equation}
  \sum_{j=1}^s \vec{k}_{j,\infty}^{\top} M_j C_j = \bszero,
\end{equation}
where each $M_j \in \mathbb{F}_b^{\infty \times \infty}$ is an independent lower triangular matrix whose diagonal entries are drawn independently and uniformly from $\mathbb{F}_b^{\times}$, and whose entries below the diagonal are independent and uniformly distributed in $\mathbb{F}_b$. Here we assume that all generating matrices $C_1,\ldots,C_s$ are of size $\infty \times m$. The probability of the event $\{\bm{k} \in \mathcal{D}^{\perp}\}$ is given in the following lemma. 
\begin{Lemma}
    \label{lemma:probability_k_in_dual}
    For a non-empty subset $\mathfrak{u}\subseteq \{1,\ldots,s\}$ and a vector $\bm{\ell}_{\mathfrak{u}}\in \natu^{|\mathfrak{u}|}$, define the set 
    \[ A(\mathfrak{u}, \bm{\ell}_{\mathfrak{u}}) = \left\{ \bm{k}\in \natu_0^s \mid \text{$b^{\ell_j - 1} \leq k_j < b^{\ell_j}$ for $j \in \mathfrak{u}$, and $k_j=0$ for $j\notin \mathfrak{u}$} \right\}. \] 
    Let $\mathcal{D}_C^{\perp}$ be the dual net corresponding to fixed (deterministic) generating matrices $C_1, ..., C_s$.
    With LMS, for any $\bsk\in A(\mathfrak{u}, \bm{\ell}_{\mathfrak{u}})$, we have
    \begin{equation}
        \Pr(\bm{k} \in \mathcal{D}^{\perp}) = \frac{\abs{A(\mathfrak{u}, \bm{\ell}_{\mathfrak{u}}) \cap \mathcal{D}_C^{\perp} }}{\abs{A(\mathfrak{u}, \bm{\ell}_{\mathfrak{u}})}},
    \end{equation}
    where $\abs{A(\mathfrak{u}, \bm{\ell}_{\mathfrak{u}})}=b^{|\bm{\ell}_{\mathfrak{u}}|_1-|\mathfrak{u}|}(b-1)^{|\mathfrak{u}|}$.
\end{Lemma}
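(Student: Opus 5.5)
The idea is to show that, for fixed $\bm{k}\in A(\mathfrak{u},\bm{\ell}_{\mathfrak{u}})$, the random vector $(M_j^{\top}\vec{k}_{j,\infty})_{j\in\mathfrak{u}}$ is uniformly distributed over the digit vectors of $A(\mathfrak{u},\bm{\ell}_{\mathfrak{u}})$. Since $\vec{k}_{j,\infty}^{\top}M_jC_j=(M_j^{\top}\vec{k}_{j,\infty})^{\top}C_j$, we have
\[
\bm{k}\in\mathcal{D}^\perp \iff \bm{k}'\in\mathcal{D}_C^\perp,
\]
where $\bm{k}'$ is the random frequency with digit vectors $\vec{k}'_{j}=M_j^{\top}\vec{k}_{j,\infty}$ (and $k'_j=0$ for $j\notin\mathfrak{u}$). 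Once the uniformity is established, the probability equals the proportion of $A(\mathfrak{u},\bm{\ell}_{\mathfrak{u}})$ lying in $\mathcal{D}_C^\perp$, which is the claim. The cardinality formula is immediate: for each $j\in\mathfrak{u}$, the leading digit $k_{j,[\ell_j]}$ ranges over $\mathbb{F}_b^{\times}$ and the $\ell_j-1$ lower digits range freely, giving $(b-1)b^{\ell_j-1}$ choices per coordinate. Multiplying over $j\in\mathfrak{u}$ gives $b^{|\bm{\ell}_{\mathfrak{u}}|_1-|\mathfrak{u}|}(b-1)^{|\mathfrak{u}|}$.

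The key step is the one-dimensional computation. Fix $k$ with $b^{\ell-1}\le k<b^{\ell}$, so that $k_{[\ell]}\neq 0$ and $k_{[i]}=0$ for $i>\ell$. Since $M$ is lower triangular, $M^{\top}$ is upper triangular, and
\[
(M^{\top}\vec{k})_i=\sum_{r\ge i}m_{r,i}k_{[r]}=\sum_{r=i}^{\ell}m_{r,i}k_{[r]}.
\]
This already shows that the entries with $i>\ell$ vanish. For $i=\ell$ the entry is $m_{\ell,\ell}k_{[\ell]}$, which is uniform on $\mathbb{F}_b^{\times}$ because $m_{\ell,\ell}$ is uniform on $\mathbb{F}_b^{\times}$ and $k_{[\ell]}\neq0$. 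For $i<\ell$ the entry contains the term $m_{\ell,i}k_{[\ell]}$, where $m_{\ell,i}$ is uniform on $\mathbb{F}_b$ and independent of all other entries. Moreover, the variables $m_{\ell,i}$, $i=1,\ldots,\ell$, occur only in the $i$-th output coordinate. Conditioning on all other entries of $M$, each output coordinate $i<\ell$ is then an independent uniform shift in $\mathbb{F}_b$, and coordinate $\ell$ is an independent uniform element of $\mathbb{F}_b^{\times}$.

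It follows that $M^{\top}\vec{k}$ is uniformly distributed on the set of digit vectors of integers in $[b^{\ell-1},b^{\ell})$. Because the $M_j$ are independent across $j$, the joint vector $\bm{k}'$ is uniform on $A(\mathfrak{u},\bm{\ell}_{\mathfrak{u}})$. This gives
\[
\Pr(\bm{k}\in\mathcal{D}^\perp)=\Pr(\bm{k}'\in\mathcal{D}_C^\perp)=\frac{|A(\mathfrak{u},\bm{\ell}_{\mathfrak{u}})\cap\mathcal{D}_C^\perp|}{|A(\mathfrak{u},\bm{\ell}_{\mathfrak{u}})|}.
\]

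The main point needing care is the uniformity argument: one must check that each entry of the last row ($r=\ell$) of $M$ appears in exactly one output coordinate, so that the conditional argument yields a product of uniform distributions. This works because output coordinate $i$ involves only column $i$ of $M$. The infinite-dimensional setting causes no difficulty, since only the finitely many entries $m_{r,i}$ with $i\le r\le\ell$ are involved.
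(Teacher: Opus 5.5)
Your proposal is correct and follows essentially the same route as the paper: you show that $\vec{k}_{j,\infty}^{\top}M_j$ has zero digits above position $\ell_j$, a uniform $\mathbb{F}_b^{\times}$ digit at position $\ell_j$, and independent uniform digits below it, so the transformed frequency is uniform on $A(\mathfrak{u},\bm{\ell}_{\mathfrak{u}})$ and the dual-net condition transfers to $\mathcal{D}_C^{\perp}$. Your step of conditioning on all entries of $M_j$ outside row $\ell_j$ justifies the independence of the lower digits more carefully than the paper, which simply asserts it.
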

\begin{proof}
    Denote $\vec{v}_{j,\infty}^{\top} = \vec{k}_{j,\infty}^{\top} M_j$. Since $b$ is a prime number and $M_j$ is a random lower triangular matrix with non-zero diagonal entries, for a fixed $k_j$ with $b^{\ell_j-1}\le k_j<b^{\ell_j}$, its $\ell_j$-th digit (the most significant digit) is multiplied by a uniform element in $\mathbb{F}_b^{\times}$. As a result, the $\ell_j$-th component of $\vec{v}_j$ is uniformly distributed in $\mathbb{F}_b^{\times}$, while all components with indices $i<\ell_j$ are independently and uniformly distributed in $\mathbb{F}_b$ due to the entries of $M_j$ below the diagonal. All components with indices $i>\ell_j$ are zero. Thus the integer $v_j$ is uniformly distributed over the set of integers $[b^{\mu_1(k_j) - 1}, b^{\mu_1(k_j) })$.
    
    Due to the independence between $M_1,\ldots,M_s$, the vector $\bm{v} = (v_1, \dotsc, v_s)$ is uniformly distributed in the set $A(\mathfrak{u}, \bm{\ell}_{\mathfrak{u}})$. The condition $\bm{k} \in \mathcal{D}^\perp$ under LMS is then equivalent to the condition $\bm{v} \in \mathcal{D}_C^\perp$. Thus, the probability is given by the ratio of the number of elements in $\mathcal{A}(\mathfrak{u}, \bm{\ell}_{\mathfrak{u}})$ that satisfy the dual net condition to the total size of the set. The cardinality is $|\mathcal{A}(\mathfrak{u}, \bm{\ell}_{\mathfrak{u}})| = \prod_{j \in \mathfrak{u}} (b^{\ell_j} - b^{\ell_j-1}) = b^{|\bm{\ell}_{\mathfrak{u}}|_1-|\mathfrak{u}|}(b-1)^{|\mathfrak{u}|}$, which completes the proof.
\end{proof}

\begin{remark}[Variance equivalence between Owen-scrambling and LMS]
    The above results can be used to establish a variance equivalence between Owen-scrambling and LMS. Specifically, the probability term $\Pr((\bm{k}_{\mathfrak{u}},\bszero) \in \mathcal{D}^{\perp})$ arising from LMS can be related to the gain coefficient from Owen-scrambling through the identity:
    \begin{equation}
        \label{eq:probability_gain_equivalence}
        \Pr\left(\sum_{j \in \mathfrak{u}} \vec{k}_{j,\infty}^{\top} M_j C_j = \bszero \right) = b^{-m}  \Gamma_{\mathfrak{u}, \bm{\mu}_1 (\bm{k}_{\mathfrak{u}}) - \bm{1}_{\mathfrak{u}} },
    \end{equation}
    where $\Gamma_{\mathfrak{u}, \bm{\mu}_1 (\bm{k}_{\mathfrak{u}}) - \bm{1}_{\mathfrak{u}} }$ is the gain coefficient defined in~\cite{Owen97}. 
    The equivalence~\eqref{eq:probability_gain_equivalence} in base 2 follows by comparing the results in \cite[Section~4]{PO23b} and \cite[Lemma~4]{PO24}. The extension to a general prime base $b$ is obtained by comparing Lemma~\ref{lemma:probability_k_in_dual} with \cite[Lemma~3.3]{GS23}. 
    
    Recall that the Walsh variance components are defined as
    \begin{equation}
        \sigma^2_{\mathfrak{u}, \bm{\ell}} = \sum_{ b^{{\ell}_j} \leq  {k}_{j} < b^{\ell_j + 1}, \ j \in \mathfrak{u} } \abs{\hat{f}(\bm{k}_{\mathfrak{u}}, \bm{0} )}^2.
    \end{equation}
    Using this, we can show the equivalence of the variance under Owen-scrambling and LMS with a digital shift as follows:
    \begin{align*}
        \var{\hat{I}} &= \sum_{\varnothing \neq \mathfrak{u} \subseteq 1{:}s} \sum_{\bm{k}_{\mathfrak{u}} \in \mathbb{N}^{\abs{\mathfrak{u}}}} \Pr \left( (\bm{k}_{\mathfrak{u}}, \bm{0}) \in \mathcal{D}^{\perp} \right) \abs{\hat{f}(\bm{k}_{\mathfrak{u}}, \bm{0})}^2 \\
        &= \sum_{\varnothing \neq \mathfrak{u} \subseteq 1{:}s} \sum_{\bm{\ell} \in \mathbb{N}^{\abs{\mathfrak{u}}}_0 } \sum_{\bm{k} \in A(\mathfrak{u}, \bm{\ell} + \bm{1}_{\mathfrak{u}})} \Pr \left( (\bm{k}_{\mathfrak{u}}, \bm{0}) \in \mathcal{D}^{\perp} \right) \abs{\hat{f}(\bm{k}_{\mathfrak{u}}, \bm{0})}^2\\
        &= \frac{1}{N} \sum_{\varnothing \neq \mathfrak{u} \subseteq 1{:}s} \sum_{\bm{\ell} \in \mathbb{N}^{\abs{\mathfrak{u}}}_0 } \Gamma_{\mathfrak{u}, \bm{\ell}} \sigma^2_{\mathfrak{u}, \bm{\ell}}.
    \end{align*}
\end{remark}

For any non-empty subset $\mathfrak{u}\subseteq \{1,\ldots,s\}$, let $C_{\mathfrak{u}}$ denote the set of generating matrices $\{C_j\}_{j\in \mathfrak{u}}$. We define $t_{\mathfrak{u}}$ as the smallest non-negative integer such that for any vector $\bm{\ell}_{\mathfrak{u}}\in \natu_0^{|\mathfrak{u}|}$ satisfying $|\bm{\ell}_{\mathfrak{u}}|_1=m-t_{\mathfrak{u}}$, the rows of the matrices $\{C_j(1{:}\ell_j,:)\}_{j\in \mathfrak{u}}$ are linearly independent over $\mathbb{F}_b$. We then provide bounds for the probability $\Pr{(\bm{k} \in \mathcal{D}^{\perp})}$ in the following lemma, which serves as the counterpart of Lemma~\ref{lem:prob_each_k} for the proposed HRD. The advantage of the LMS can be found in the regime where $\mu_1(\bm{k}) \leq m - t_{\mathfrak{u}}$, while the HRD provides sharper (or at least equivalent) bounds for the other cases.

\begin{Lemma}\label{lem:prob_each_k_LMS}
    For a non-empty subset $\mathfrak{u}\subseteq \{1,\ldots,s\}$ and a vector $\bm{k}_{\mathfrak{u}}\in \natu^{|\mathfrak{u}|}$, let $\bm{k} = (\bm{k}_{\mathfrak{u}}, \bm{0})$. Under LMS, we have the following bounds:
    \begin{equation}
        \Pr{(\bm{k} \in \mathcal{D}^{\perp})} = \begin{cases}
            0 & \text{if $\mu_1(\bm{k}) \leq m - t_{\mathfrak{u}}$}, \\
            b^{-m} & \text{if $\max_{j\in \mathfrak{u}} \mu_1({k}_j) > m$},
        \end{cases}
    \end{equation}
    and for the intermediate regime where $\max_{j\in \mathfrak{u}} \mu_1({k}_j) \leq m$,
    \begin{equation}
        \label{eq:probability_k_in_dual_bound}
        \Pr{(\bm{k} \in \mathcal{D}^{\perp})} \leq \begin{cases}
       \min\left(\frac{b^{-m + t_{\mathfrak{u}} + \abs{\mathfrak{u}} - 1}}{(b-1)^{\abs{\mathfrak{u}} -1}}, 1\right) & \text{if $m - t_{\frak{u}} < \mu_1(\bm{k}) \leq m - t_{\mathfrak{u}} + \abs{\mathfrak{u}}$}, \\
        b^{-m + t_{\mathfrak{u}}} & \text{if $\mu_1(\bm{k}) > m - t_{\mathfrak{u}} + \abs{\mathfrak{u}}$}.
        \end{cases}
    \end{equation}
\end{Lemma}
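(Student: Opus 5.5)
The plan is to reduce everything to counting, via Lemma~\ref{lemma:probability_k_in_dual}. Set $\ell_j=\mu_1(k_j)$ for $j\in\mathfrak{u}$. The lemma gives $\Pr(\bm{k}\in\mathcal{D}^\perp)=|A(\mathfrak{u},\bm{\ell}_{\mathfrak{u}})\cap\mathcal{D}_C^\perp|/|A(\mathfrak{u},\bm{\ell}_{\mathfrak{u}})|$, where $|A|=b^{|\bm{\ell}_{\mathfrak{u}}|_1-|\mathfrak{u}|}(b-1)^{|\mathfrak{u}|}$.

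An element $\bm{v}\in A$ is encoded by its digit vectors $\vec v_j\in\mathbb{F}_b^{\ell_j}$, $j\in\mathfrak{u}$, with the top digit $v_{j,[\ell_j]}\neq0$. The dual condition $\sum_{j\in\mathfrak{u}}C_j(1{:}\ell_j,:)^\top\vec v_j=\bszero$ is a homogeneous linear system in these $|\bm{\ell}_{\mathfrak{u}}|_1$ digits. Its coefficient matrix is $C_{\bm{\ell}}^\top$, with zeros for $j\notin\mathfrak{u}$. So each case reduces to a rank statement for $C_{\bm{\ell}}$ or for a row-subselection $C_{\bm{q}}$ with $\bm{q}\le\bm{\ell}$ componentwise. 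The only tool needed is the definition of $t_{\mathfrak{u}}$. If $\bm{q}\le\bm{\ell}$ and $|\bm{q}|_1=m-t_{\mathfrak{u}}$, then the rows of $C_{\bm{q}}$ are independent. Hence they are also independent whenever $|\bm{q}|_1\le m-t_{\mathfrak{u}}$, by padding $\bm{q}$ up to a vector of size $m-t_{\mathfrak{u}}$.

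\emph{Case $\mu_1(\bm{k})\le m-t_{\mathfrak{u}}$.} Here $|\bm{\ell}_{\mathfrak{u}}|_1\le m-t_{\mathfrak{u}}$, so the rows of $C_{\bm{\ell}}$ are linearly independent. A nonzero digit vector (top digits are nonzero) therefore cannot satisfy the system. Hence $A\cap\mathcal{D}_C^\perp=\varnothing$ and the probability is $0$.

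\emph{Intermediate regime, upper bounds.} Let $S\subseteq\mathfrak{u}$ be nonempty. Fix the top digits $v_{j,[\ell_j]}$ for $j\in S$, which gives $(b-1)^{|S|}$ choices. Leave all other digits free, including the top digits for $j\notin S$; this only overcounts. The free variables are those of the stacked matrix $C_{\bm{q}'}$ with $q'_j=\ell_j-1$ for $j\in S$ and $q'_j=\ell_j$ otherwise. The system becomes an affine system $C_{\bm{q}'}^\top z=w$, whose solution set, if nonempty, is a coset of the kernel. Its size is at most $b^{|\bm{\ell}|_1-|S|-\rank(C_{\bm{q}'})}$.
\begin{itemize}
\item If $|\bm{q}'|_1\ge m-t_{\mathfrak{u}}$, then some $\bm{q}\le\bm{q}'$ has $|\bm{q}|_1=m-t_{\mathfrak{u}}$, so $\rank(C_{\bm{q}'})\ge m-t_{\mathfrak{u}}$.
\item Dividing by $|A|$ then gives
\[
\Pr(\bm{k}\in\mathcal{D}^\perp)\le \frac{(b-1)^{|S|}\,b^{|\bm{\ell}|_1-|S|-m+t_{\mathfrak{u}}}}{b^{|\bm{\ell}|_1-|\mathfrak{u}|}(b-1)^{|\mathfrak{u}|}}=\frac{b^{-m+t_{\mathfrak{u}}+|\mathfrak{u}|-|S|}}{(b-1)^{|\mathfrak{u}|-|S|}}.
\]
\end{itemize}
The choice of $S$ is dictated by the requirement $|\bm{\ell}|_1-|S|\ge m-t_{\mathfrak{u}}$.
\begin{itemize}
\item When $\mu_1(\bm{k})>m-t_{\mathfrak{u}}+|\mathfrak{u}|$, take $S=\mathfrak{u}$. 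The bound becomes $b^{-m+t_{\mathfrak{u}}}$.
\item When $m-t_{\mathfrak{u}}<\mu_1(\bm{k})\le m-t_{\mathfrak{u}}+|\mathfrak{u}|$, the slack is at least $1$, so a singleton $S=\{j_0\}$ is allowed. This yields $b^{-m+t_{\mathfrak{u}}+|\mathfrak{u}|-1}/(b-1)^{|\mathfrak{u}|-1}$. The minimum with $1$ is trivial.
\end{itemize}

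\emph{Case $\max_j\mu_1(k_j)>m$.} Pick $j^*$ with $\ell_{j^*}\ge m+1$. Under LMS, the lowest $m$ digits of $v_{j^*}$ are uniform on $\mathbb{F}_b^m$ and independent of everything else; the top digit and the other coordinates are independent of them. Their contribution to $\sum_j C_j^\top\vec v_j$ is $C_{j^*}(1{:}m,:)^\top$ applied to a uniform vector. This term is uniform on $\mathbb{F}_b^m$, hence so is the whole sum, giving exactly $b^{-m}$, provided $C_{j^*}(1{:}m,:)$ is nonsingular.

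This is the step I expect to be the main obstacle. The definition of $t_{\mathfrak{u}}$ only guarantees that the first $m-t_{\mathfrak{u}}$ rows of each $C_j$ are independent, not the first $m$. I would therefore make explicit the standard assumption, satisfied for instance by Sobol' and Niederreiter matrices, that every one-dimensional projection is a $(0,m,1)$-net, i.e.\ each $C_j(1{:}m,:)$ is invertible. Without that assumption I would fall back to the weaker statement $\le b^{-m+t_{\mathfrak{u}}}$ from the counting argument above.
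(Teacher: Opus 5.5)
Your proof is correct, but it reaches the bounds by a different route from the paper.

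\textbf{The paper's route.} It conditions on the top-digit term $\sum_{j\in\mathfrak u}k_j(\ell_j)M_j(\ell_j,\ell_j)C_j(\ell_j,:)$. The lower digits then contribute a vector uniform on the row span of $C_{\bm\mu_1(\bm k)-\bm 1_{\mathfrak u}}$, which gives a factor $b^{-\rank(C_{\bm\mu_1(\bm k)-\bm 1_{\mathfrak u}})}$. The probability that the top-digit multipliers, uniform in $(\mathbb{F}_b^{\times})^{|\mathfrak u|}$, fall into the subspace $V$ is controlled by a rank--nullity bound on $\dim V$ together with \cite[Lemma~3.9]{GS23}.

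\textbf{Your route.} You pass to exact counting via Lemma~\ref{lemma:probability_k_in_dual}. You fix only the top digits indexed by a subset $S$, deliberately overcount by freeing the remaining top digits, and bound the solutions of the affine system by the kernel dimension of $C_{\bm q'}^{\top}$. This needs nothing beyond the definition of $t_{\mathfrak u}$ and avoids \cite[Lemma~3.9]{GS23}. Taking $|S|=\min(|\mathfrak u|,\mu_1(\bm k)-m+t_{\mathfrak u})$ gives $b^{-m+t_{\mathfrak u}}(b/(b-1))^{|\mathfrak u|-|S|}$. This is exactly what the paper's rank and $\dim V$ estimates produce before specialising to the worst case. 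The only thing you give up is the nonzero constraint on top digits outside $S$. The paper's exact conditional formula keeps that constraint, but it does not matter for the stated bounds.

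\textbf{The case $\max_{j\in\mathfrak u}\mu_1(k_j)>m$.} Your hesitation here is justified, and it is not a defect of your method. The paper's proof handles this case only by citing \cite[Lemma~4]{PO24} and does not address the issue you raise. The definition of $t_{\mathfrak u}$ alone does not make $C_{j^*}(1{:}m,:)$ nonsingular, and without that the claimed equality is false. Here is a counterexample:
\begin{itemize}
\item Take $b=2$, $m=2$, $s=1$, and let $C_1$ have rows $(1,0),(1,0),(1,0),(0,1)$ followed by zero rows.
\item Then $C_1$ has full column rank and $t_{\{1\}}=1$.
\item Take $k=4$, so $\mu_1(k)=3>m$. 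The scrambled $v$ is uniform on $\{4,5,6,7\}$, and $C_1^{\top}\vec v_{\infty}=(v_{[1]}+v_{[2]}+1,0)^{\top}$ vanishes exactly for $v\in\{5,6\}$.
\item Hence $\Pr(k\in\mathcal{D}^{\perp})=1/2\neq 2^{-m}$.
\end{itemize}
So your extra hypothesis is genuinely needed. It holds automatically when $t_{\mathfrak u}=0$, and for Sobol' matrices, which are upper triangular with unit diagonal. Under it your argument is complete.

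Your fallback is also right exactly as stated. Since $\ell_{j^*}\ge m+1$ and $\ell_j\ge 1$ for all $j\in\mathfrak u$, you have $|\bm\ell|_1\ge m+|\mathfrak u|\ge m-t_{\mathfrak u}+|\mathfrak u|$. So $S=\mathfrak u$ is admissible and yields $\Pr(\bm k\in\mathcal{D}^{\perp})\le b^{-m+t_{\mathfrak u}}$. The example above shows this bound is attained.
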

\begin{proof}
    The first case $\mu_1(\bm{k}) \leq m - t_{\mathfrak{u}}$ is straightforward from the definition of the $t_{\mathfrak{u}}$ parameter. For instance, the reader may refer to \cite[Lemmas 1 and 5]{Hel02}. The second case $\max_{j\in \mathfrak{u}} \mu_1({k}_j) > m$ is a natural extension of \cite[Lemma~4]{PO24} from base $2$ to a prime base $b$. The remaining cases in~\eqref{eq:probability_k_in_dual_bound} extend \cite[Lemma~4]{PO24}, which we detail as follows. 

    For simplicity, let $\ell_j = \mu_1(k_j)$. We consider the decomposition:
\begin{equation}
    \label{eq:dual_net_decomposition}
    \sum_{j \in \mathfrak{u}} \vec{k}_j^{\top} M_j C_j = \sum_{j \in \mathfrak{u}} k_j(\ell_j) M_j(\ell_j, \ell_j) C_j(\ell_j, :) + \sum_{j \in \mathfrak{u}} \left( \vec{k}_j^{\top} M_j C_j - k_j(\ell_j) M_j(\ell_j, \ell_j) C_j(\ell_j, :) \right).
\end{equation}
For each $j \in \mathfrak{u}$, the summand of the second term on the right-hand side of \eqref{eq:dual_net_decomposition} is uniformly distributed on the span of the rows of $C_j(1{:}\ell_j - 1, :)$. Due to the independence of the random matrices $M_j$ across dimensions, this second sum is uniformly distributed on the linear span of $C_{\bm{\mu}_1(\bm{k}) - \bsone_{\mathfrak{u}}}$. Thus, if $\sum_{j \in \mathfrak{u}} k_j(\ell_j) M_j(\ell_j, \ell_j) C_j(\ell_j, :) \notin \operatorname{span}(C_{\bm{\mu}_1(\bm{k}) - \bsone_{\mathfrak{u}}})$, the conditional probability of the total sum being the zero vector is $0$, i.e.,
\begin{equation}
    \label{eq:cond_prob_dual_net_1}
    \Pr \left( \sum_{j=1}^s \vec{k}_j^{\top} M_j C_j = \bszero \;\middle|\; \sum_{j \in \mathfrak{u}} k_j(\ell_j) M_j(\ell_j, \ell_j) C_j(\ell_j, :) \notin \operatorname{span}(C_{\bm{\mu}_1(\bm{k}) - \bsone_{\mathfrak{u}}}) \right) = 0.
\end{equation}
Otherwise, the conditional probability is given by
\begin{equation}
    \label{eq:cond_prob_dual_net_2}
    \Pr \left( \sum_{j=1}^s \vec{k}_j^{\top} M_j C_j = \bszero \;\middle|\; \sum_{j \in \mathfrak{u}} k_j(\ell_j) M_j(\ell_j, \ell_j) C_j(\ell_j, :) \in \operatorname{span}(C_{\bm{\mu}_1(\bm{k}) - \bsone_{\mathfrak{u}}}) \right) = b^{-\rank(C_{\bm{\mu}_1(\bm{k}) - \bsone_{\mathfrak{u}}})}.
\end{equation}
From the definition of the $t_{\mathfrak{u}}$-parameter, we have the following rank bounds:
    \begin{equation}
    \label{eq:rank_bound_c_mu_minus_1}
    \begin{aligned}
    \rank\!\bigl(C_{\bm{\mu}_1(\bm{k}) - \bm{1}_{\mathfrak{u}}}\bigr)
    &= \mu_1(\bm{k}) - \abs{\mathfrak{u}},
    &&\text{if } \mu_1(\bm{k}) \le m - t_{\mathfrak{u}} + \lvert\mathfrak{u}\rvert, \\[4pt]
    \rank\!\bigl(C_{\bm{\mu}_1(\bm{k}) - \bm{1}_{\mathfrak{u}}}\bigr)
    &\ge m - t_{\mathfrak{u}},
    &&\text{otherwise.}
    \end{aligned}
    \end{equation}

We now study the probability that $\sum_{j \in \mathfrak{u}} k_j(\ell_j) M_j(\ell_j, \ell_j) C_j(\ell_j, :) \in \operatorname{span}(C_{\bm{\mu}_1(\bm{k}) - \bsone_{\mathfrak{u}}})$. Let $\underline{C}_{\bm{\mu}_1(\bm{k})}$ be the vertical concatenation of the rows $C_j(\ell_j, :)$ for $j \in \mathfrak{u}$. Defining $\bm{q} = (k_j(\ell_j) M_j(\ell_j, \ell_j))_{j \in \mathfrak{u}}^{\top} \in (\mathbb{F}_b^{\times})^{|\mathfrak{u}|}$, the condition is equivalent to $\underline{C}_{\bm{\mu}_1(\bm{k})}^{\top} \bm{q} \in \mathrm{im}(C_{\bm{\mu}_1(\bm{k}) - \bsone_{\mathfrak{u}}}^{\top})$. Let $V \subseteq \mathbb{F}_b^{|\mathfrak{u}|}$ be the subspace of vectors $\bm{q}$ satisfying this condition. By the rank-nullity theorem, we have
\begin{equation}
    \dim V = |\mathfrak{u}| + \rank(C_{\bm{\mu}_1(\bm{k}) - \bsone_{\mathfrak{u}}}) - \rank(C_{\bm{\mu}_1(\bm{k})}).
\end{equation}
Since $\rank(C_{\bm{\mu}_1(\bm{k})}) - \rank(C_{\bm{\mu}_1(\bm{k}) - \bsone_{\mathfrak{u}}}) \ge 0$, and specifically, when $m - t_{\mathfrak{u}} < \mu_1(\bm{k}) \le m - t_{\mathfrak{u}} + |\mathfrak{u}|$, we have $\rank(C_{\bm{\mu}_1(\bm{k})}) - \rank(C_{\bm{\mu}_1(\bm{k}) - \bsone_{\mathfrak{u}}}) \ge (m - t_{\mathfrak{u}}) - (\mu_1(\bm{k}) - |\mathfrak{u}|)$, it follows that
\begin{equation}
    \label{eq:bound_dimV}
    \dim V \le 
    \begin{cases}
        -m + t_{\mathfrak{u}} + \mu_1(\bm{k}) & \text{if } m - t_{\mathfrak{u}} < \mu_1(\bm{k}) \le m - t_{\mathfrak{u}} + |\mathfrak{u}|, \\
        |\mathfrak{u}| & \text{if } \mu_1(\bm{k}) > m - t_{\mathfrak{u}} + |\mathfrak{u}|.
    \end{cases}
\end{equation}
Given that $\bm{q}$ is uniformly distributed in $(\mathbb{F}_b^{\times})^{|\mathfrak{u}|}$, the probability is bounded as
\begin{equation}
    \label{eq:bound_dimV_prob}
    \Pr(\underline{C}_{\bm{\mu}_1(\bm{k})}^{\top} \bm{q} \in \mathrm{im}(C_{\bm{\mu}_1(\bm{k}) - \bsone_{\mathfrak{u}}}^{\top})) \le \frac{(b-1)^{\dim V}}{(b-1)^{|\mathfrak{u}|}},
\end{equation}
referring to \cite[Lemma 3.9]{GS23}. Combining \eqref{eq:cond_prob_dual_net_2}, \eqref{eq:rank_bound_c_mu_minus_1}, \eqref{eq:bound_dimV}, and \eqref{eq:bound_dimV_prob} completes the proof.
\end{proof}

Comparing Lemmas~\ref{lem:prob_each_k} and \ref{lem:prob_each_k_LMS} reveals that LMS is advantageous in the regime where $\mu_1(\bm{k}) \leq m - t_{\mathfrak{u}}$ (i.e., for ``small'' $\bm{k}$), whereas HRD (as well as URD) is better in the intermediate regime (i.e., for ``medium'' $\bm{k}$). Importantly, for well-known sequences such as Sobol' sequences \cite{Sob67}, the full-dimensional $t$-value scales as $O(s\log s)$. In such cases, HRD is particularly effective as it avoids the exponential growth of the probability bound with respect to $s$ that would otherwise be incurred by the $t$-value.

\section{Bounds on Walsh Coefficients}
\label{sec:walsh_bounds}

In this section, we provide bounds on the Walsh coefficients to facilitate the derivations of convergence rates in the next section. 

Let $\alpha\in \natu$ denote the smoothness parameter. For $k\in \natu$ with its $b$-adic expansion $k=\sum_{i=1}^{\nu}c_ib^{a_i-1}$ with $c_i\in \mathbb{F}_b^{\times}$ and $a_1>a_2>\cdots>a_{\nu}>0$, we write 
\[ k^{+}_{\alpha} := \sum_{i=1}^{\min(\alpha, \nu)} c_i b^{a_i - 1}\quad \text{and}\quad k^{-}_{\alpha} := \sum_{i=\alpha+1}^{\nu} c_i b^{a_i - 1}.\]
If $k=0$, we set $k^{+}_{\alpha}=k^{-}_{\alpha}=0$. Notice that $k = k^{+}_{\alpha} + k^{-}_{\alpha}$ holds, and $k^{+}_{\alpha} \geq k^{-}_{\alpha}$ for $\alpha \in \mathbb{N}$. As we introduced in Section~\ref{sec:back}, recall that we denote by $\kappa(k) = \{a_1,\ldots,a_{\nu}\}$ the set of positions of non-zero digits in the base-$b$ expansion of $k$, and by $k_{(\alpha)}$ the $\alpha$-th largest element of $\kappa$, i.e., $k_{(\alpha)}=a_{\alpha}$ if $\nu\ge \alpha$ and $k_{(\alpha)}=0$ otherwise. For a vector $\bm{k}\in \natu_0^s$, we write $\bm{k}_{(\alpha)}=(k_{1,(\alpha)},\ldots,k_{s,(\alpha)})$. 

Moreover, let $\bm{n}_{\alpha}(\bm{k})=(n_{\alpha}(k_1),\ldots,n_{\alpha}(k_s))\in \natu_0^s$, where $n_{\alpha}(k_j)=\min(\alpha,|\kappa(k_j)|)$. It is important to note that, for any $k\in \natu_0$ such that $|\kappa(k)|\le \alpha$, we have 
\[ k^{+}_{n_{\alpha}(k)}=k^{+}_{n_{\alpha}(k)+1}=\cdots=k^{+}_{\alpha}\quad \text{and}\quad k^{-}_{n_{\alpha}(k)}=k^{-}_{n_{\alpha}(k)+1}=\cdots=k^{-}_{\alpha}=0. \]
For a function $f: [0, 1)^s\to \mathbb{R}$ that is $\alpha$-times differentiable in each variable, Suzuki and Yoshiki~\cite[Theorem~2.5]{SY16} derived the following formula for the $\bm{k}$-th Walsh coefficient:
\begin{equation}
    \label{eq:walsh_coef}
    \hat{f}(\bm{k}) = (-1)^{|\bm{n}_{\alpha}(\bm{k})|_1} \int_{[0, 1)^s} f^{(\bm{n}_{\alpha}(\bm{k}))} (\bm{x}) \prod_{j=1}^s \overline{\bwal{{k_j}^{-}_{\alpha}}(x_j)} W({k_j}^{+}_{\alpha})(x_j)\, \mathrm{d}\bm{x},
\end{equation}
where $f^{(\bm{n}_{\alpha}(\bm{k}))}$ denotes the partial mixed derivative of $f$ of order $\bm{n}_{\alpha}(\bm{k})$. Furthermore, the auxiliary function $W(k)(\cdot)$ appearing in \eqref{eq:walsh_coef} is periodic with period $b^{-a_{\nu} + 1}$ when $\nu > 0$ and is defined recursively in \cite[Definition 2.1]{SY16}.
\begin{Definition}
    For $k \in \mathbb{N}_0$, $W(k)(\cdot): [0, 1) \to \mathbb{C}$ is defined recursively as:
    \begin{equation}
        \begin{aligned}
            W(0)(x) &= 1,\\
            W(k)(x) &= \int_0^x  \overline{\bwal{{c_\nu} b^{a_{\nu} - 1}}(y)} W(k^{\prime}) (y)\, \mathrm{d}y,
        \end{aligned}
    \end{equation}
    where $k^{\prime} = k - {c_\nu} b^{a_{\nu} - 1}$. 
\end{Definition}

We list a useful property of $W$ in the following lemma. The proof is deferred to Appendix~\ref{sec:proof_W_linfty_bound}. 

\begin{Lemma}[$L^{\infty}$ bound]
    \label{lemma:W_linfty_bound}
    For any $k > 0$, the following bound for the function $W(k_{\alpha}^{+})$ holds:
    \begin{equation}
        \norm{W(k_{\alpha}^{+})}_{L^{\infty}} \leq \left(\frac{1 + \pi}{2} \right) b^{n_{\alpha}(k)-\mu_{\alpha}(k)}
    \end{equation}
    for any fixed base $b \geq 2$. 
\end{Lemma}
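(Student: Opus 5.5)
Write $k^{+}_{\alpha}=\sum_{i=1}^{n}c_i b^{a_i-1}$ with $n=n_{\alpha}(k)$ and $a_1>\dots>a_n$, so that $\mu_{\alpha}(k)=a_1+\dots+a_n$. The bound to prove is $\|W(k^+_\alpha)\|_\infty\le \frac{1+\pi}{2}\,b^{n-\sum_i a_i}$. I would argue by induction on the number of digits, peeling off the lowest digit $c_nb^{a_n-1}$ exactly as in the recursive definition. Set $k'=k^+_\alpha-c_nb^{a_n-1}$. Then
\[
W(k^+_\alpha)(x)=\int_0^x \overline{\bwal{c_nb^{a_n-1}}(y)}\,W(k')(y)\,\mathrm{d}y .
\]
Two structural facts drive the argument.
\begin{enumerate}
\item $W(k')$ is periodic with period $b^{-a_{n-1}+1}$. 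This period is a multiple of $b^{-a_n+1}$, since $a_{n-1}>a_n$ means $b^{-a_{n-1}+1}$ divides into $b^{-a_n+1}$-blocks; equivalently, $W(k')$ varies on the coarser scale.
\item The Walsh factor $\bwal{c_nb^{a_n-1}}$ depends only on the $a_n$-th digit of $y$. It is therefore constant on intervals of length $b^{-a_n}$ and has mean zero over every interval of length $b^{-a_n+1}$.
\end{enumerate}

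The key step is a one-level estimate. Write $g=W(k')$ and $w=\bwal{c_nb^{a_n-1}}$, and split $[0,x]$ into complete blocks of length $b^{-a_n+1}$ plus a remainder. On each complete block $J$,
\[
\int_J \bar w\,g=\int_J \bar w\,\bigl(g-g(\text{left endpoint of }J)\bigr),
\]
because $\bar w$ has mean zero on $J$. To control this I need a Lipschitz-type bound for $g$. Since $W(k')$ is an integral of a bounded function, $|g'|\le\|W(k'')\|_\infty$. On the remainder interval, I would bound the integral by (length)$\times\|g\|_\infty$. The remainder has length at most $b^{-a_n+1}$, and the partial sums of $\bar w$ over its digit sub-intervals are bounded by a geometric-sum quantity $|\sum_{r<R}\omega_b^{-cr}|\,b^{-a_n}\le \frac{1}{\sin(\pi/b)}b^{-a_n}$. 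Using that bound directly loses a factor of $b$ in the constant, so I would handle it more carefully as follows.

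The cleaner route is to prove a sharper inductive statement that carries a factor $b^{-a_i+1}$ per digit, i.e.\ $\|W(k)\|_\infty\le C\,b^{\sum_i(1-a_i)}$. In the base case $n=1$, $W(c\,b^{a-1})(x)=\int_0^x\overline{\omega_b^{c\,y_{[a]}}}\,\mathrm{d}y$. This is periodic with period $b^{-a+1}$, and its maximum modulus is
\[
b^{-a}\max_R\Bigl|\sum_{r<R}\omega_b^{-cr}\Bigr|+\text{partial}\le b^{-a}\,\frac{1}{\sin(\pi/b)}\cdots .
\]
Here I expect to use the known estimate $\frac{1}{b\sin(\pi/b)}\le\frac{1}{2}$ (for $b\ge2$; it equals $\frac12$ at $b=2$ and decreases toward $\frac{1}{\pi}$). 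Combined with an extra additive term, this gives $\le\frac{1+\pi}{2}\,b^{1-a}$. For the induction step, I would not iterate the constant. Instead I would use the block mean-zero cancellation, so that each new digit contributes a factor $b^{-a_n+1}$ times $\sup|g|$ over a single block, while the variation of $g$ within the block is absorbed because $g$ is constant-pattern-periodic at the coarser scale. Concretely, I would show that $W(k^+_\alpha)(x)$ equals $W(k')$ evaluated at the block start times a one-digit $W$ plus a term that is small relative to it. The target inequality is
\[
\|W(k)\|_\infty\le \|W(k')\|_\infty\,\sup_x\Bigl|\int_{\text{block}\cap[0,x]}\bar w\Bigr|+\dots
\]

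The main obstacle is keeping the constant $\frac{1+\pi}{2}$ from growing with $n$. If the crude induction multiplies by a constant at every level, I would switch to the product structure from Suzuki--Yoshiki. Because the periods are nested, on each $b^{-a_n+1}$-block $W(k^+_\alpha)$ equals a piecewise-linear-in-phase function whose amplitude is the previous level's local increment. By induction, these increments are bounded by $b^{1-a_i}$ times a factor $\le 1$ (namely $\frac{1}{b\sin(\pi/b)}\le\frac12$ absorbed with the $+1$ from the linear part), so only the last level produces the constant $\frac{1+\pi}{2}$ and the earlier levels contribute factors at most $1$.
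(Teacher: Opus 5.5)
Your structural fact (1) has the scales reversed, and this is a genuine gap. Since $a_{n-1}>a_n$, the period $b^{-a_{n-1}+1}$ of $W(k')$ is \emph{smaller} than $b^{-a_n+1}$. In fact it divides $b^{-a_n}$, the length of the intervals on which $\bwal{c_nb^{a_n-1}}$ is constant. So $W(k')$ is the fast-oscillating factor, not the slowly varying one, and the mechanisms you build on the opposite picture fail:
\begin{itemize}
\item The Lipschitz estimate on complete blocks gives, in total, only about $\tfrac12 b^{1-a_n}\|W(k'')\|_\infty$. This misses the factor $b^{1-a_{n-1}}$ altogether.
\item For $n\ge 2$ your proposed leading term ``$W(k')$ at the block start times a one-digit $W$'' is identically zero. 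Every block start is a multiple of the period of $W(k')$, and $W(k')(0)=0$. A correct leading term would involve the \emph{mean} of $W(k')$ over a period.
\end{itemize}
As written, the induction step is never closed. Your fallback to ``the product structure from Suzuki--Yoshiki'', without naming the estimate, is essentially what the paper does. It quotes \cite[Propositions~3.5 and 3.7]{SY16}: the exact value $2^{-\mu_\alpha(k)}$ for $b=2$, and a bound in terms of $m_b=2\sin(\pi/b)$ and $M_b$ for $b>2$. It then uses $m_b\ge 4/b$, $bm_b\le 2\pi$, $M_b\le 2$ and $n_\alpha(k)\ge 1$ to turn $(2+2\pi)4^{-n_\alpha(k)}$ into $(1+\pi)/2$.

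Once the ordering is corrected, your own ingredients give a short self-contained proof, and the ``growing constant'' obstacle disappears. For $r=0,\dots,b-1$ let $I_r=[rb^{-a_n},(r+1)b^{-a_n})$.
\begin{enumerate}
\item On $I_r$ the factor $\overline{\bwal{c_nb^{a_n-1}}}$ equals $\omega_b^{-c_nr}$, and $W(k')$ runs through whole periods. Hence $\int_{I_r}W(k')$ does not depend on $r$.
\item The integral over $[0,b^{-a_n+1})$ is therefore this common value times $\sum_{r=0}^{b-1}\omega_b^{-c_nr}=0$.
\item The integrand is $b^{-a_n+1}$-periodic, so $W(k^+_\alpha)$ is $b^{-a_n+1}$-periodic, as the paper states.
\item For $x\in[0,b^{-a_n+1})$ your ``remainder'' bound is then the whole story: $|W(k^+_\alpha)(x)|\le b^{-a_n+1}\|W(k')\|_\infty$.
\item Iterating down to $W(0)=1$ gives $\|W(k^+_\alpha)\|_\infty\le b^{n_\alpha(k)-\mu_\alpha(k)}$.
\end{enumerate}
This is the lemma with constant $1$, and it needs no $\sin(\pi/b)$ estimates at all. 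Compared with the paper, this route is elementary and gives a better constant. The Suzuki--Yoshiki bound is finer in principle: it carries a factor $(2+2\pi)4^{-n_\alpha(k)}$, which beats $1$ once $n_\alpha(k)\ge 2$. The paper discards that gain when it uses only $n_\alpha(k)\ge1$.
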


\subsection{Bounds for functions with finite generalized Vitali variation}
Here, we focus on a class of smooth functions with finite generalized Vitali variation, which is defined as follows:

\begin{Definition}
    For a function $f: [0, 1)^s\to \mathbb{R}$ and a parameter $p\in (0,1]$, the generalized Vitali variation, denoted by $V_p^{(s)}(f)$, is defined as:
    \[ V_p^{(s)}(f)\coloneqq \sup_{\mathcal{P}} \left( \sum_{J \in \mathcal{P}} \mathrm{Vol}(J) \left| \frac{\Delta (f, J)}{\mathrm{Vol}(J)^p} \right|^2 \right)^{1/2},\]
    where $\mathcal{P}$ is the set of all finite partitions of $[0, 1)^s$ into $s$-dimensional axis-parallel boxes $J=\prod_{j=1}^{s}[a_j,b_j]$. Here $\Delta (f, J)$ denotes the $s$-th order mixed difference of $f$ over the box $J$, i.e.,
    \[ \Delta (f, J) \coloneqq \sum_{\bm{\epsilon} \in \{0, 1\}^s} (-1)^{s - |\bm{\epsilon}|_1} f(a_1 + \epsilon_1(b_1 - a_1), \dots, a_s + \epsilon_s(b_s - a_s)).\]
\end{Definition}

For a vector $\bm{\ell}\in \natu_*^s$, we define the following index sets
    \[ B_{\alpha, \bm{\ell}, s} = \left\{ \bm{k} \in \mathbb{N}_{*}^s \mid  \bm{k}_{(\alpha)} = \bm{\ell} \right\} \quad \text{and}\quad T_{\alpha, \bm{\ell}, s} = \left\{ \bm{k} \in \mathbb{N}_{*}^s \mid  \bm{k}_{(\alpha)}\le \bm{\ell} \right\},  \]
where the inequality $\bm{k}_{(\alpha)}\le \bm{\ell}$ is understood component-wise, i.e., $k_{j,(\alpha)}\le \ell_j$ for all $j\in 1{:}s$. With these definitions of the function's variation and the index sets, we can establish a bound on the sum of squared Walsh coefficients.

\begin{Lemma}\label{lem:bound_on_Walsh_partial_sum}
    Let $\alpha\in \natu$ and $p\in (0,1]$. For a subset $\mathfrak{u} \subseteq 1{:}s$ and a function $f$, define its projection onto the active variables by integrating out the inactive ones:
    $$ f_{\mathfrak{u}}(\bm{x}_{\mathfrak{u}}) \coloneqq \int_{[0, 1)^{s-|\mathfrak{u}|}} f(\bm{x}_{\mathfrak{u}}, \bm{x}_{-\mathfrak{u}}) \, \mathrm{d}\bm{x}_{-\mathfrak{u}}. $$
    
    For fixed vectors $\bm{k}_{\alpha}^{+},\bm{\ell}\in \natu_*^s$ such that $\bsk\in B_{\alpha+1,\bm{\ell},s}$, let $\bm{\tau} = \bm{n}_{\alpha}(\bm{k}_{\alpha}^{+})$ and $\mathfrak{u}\coloneqq \supp(\bm{\tau})=\supp(\bm{k})$. Let $\mathfrak{v} \coloneqq \supp(\bm{\ell})$ and $\mathfrak{w} \coloneqq \mathfrak{u} \setminus \mathfrak{v}$. Then the Walsh coefficients of $f$ satisfy
    \begin{align*} 
    \sum_{\substack{\bsk\in B_{\alpha+1,\bm{\ell},s}\\ \text{with fixed $\bm{k}_{\alpha}^{+}$}}} \abs{\hat{f}(\bm{k})}^2 & \leq  \norm{W((\bm{k}^{+}_{\alpha}))}^2_{L^{\infty}([0, 1)^{s})}(b-1)^{(2p-1)_{+}|\mathfrak{v}|}b^{-2p |\bm{\ell}_{\mathfrak{u}}|_1}\\
    & \quad \times \int_{[0,1)^{|\mathfrak{w}|}} \left(V_p^{(|\mathfrak{v}|)}\left( f_{\mathfrak{u}}^{(\bm{\tau}_{\mathfrak{u}})}(\cdot,\bm{x}_{\mathfrak{w}})\right)\right)^2\, \mathrm{d}\bm{x}_{\mathfrak{w}},
    \end{align*}
    where $W(\bm{k}^{+}_{\alpha})(\bsx)=\prod_{j=1}^{s}W({k_j}^{+}_{\alpha})(x_j)$ and $(x)_{+}=\max(0,x)$.
\end{Lemma}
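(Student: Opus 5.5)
We start from the Suzuki--Yoshiki formula \eqref{eq:walsh_coef}. Fix $\bm{k}^{+}_{\alpha}$ and let $\bm{\tau}=\bm{n}_{\alpha}(\bm{k}^{+}_{\alpha})$. For coordinates $j\notin\mathfrak{u}$ we have $k_j=0$, so integrating out $\bm{x}_{-\mathfrak{u}}$ replaces $f$ by $f_{\mathfrak{u}}$. For $j\in\mathfrak{u}\setminus\mathfrak{v}=\mathfrak{w}$ the condition $\bm{k}\in B_{\alpha+1,\bm{\ell},s}$ with $\ell_j=0$ forces $|\kappa(k_j)|\le\alpha$, hence $k_{j,\alpha}^{-}=0$. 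Coordinates in $\mathfrak{w}$ are therefore fully determined by the fixed $\bm{k}^{+}_{\alpha}$ and carry no Walsh factor. For $j\in\mathfrak{v}$, the tail $k_{j,\alpha}^{-}$ has largest digit position exactly $\ell_j$, so it ranges over $b^{\ell_j-1}\le k^{-}_{j,\alpha}<b^{\ell_j}$. The sum over $\bm{k}$ with fixed $\bm{k}^{+}_{\alpha}$ is thus a sum over $\bm{k}^{-}_{\mathfrak{v}}$ in the block $A(\mathfrak{v},\bm{\ell}_{\mathfrak{v}})$ of Lemma~\ref{lemma:probability_k_in_dual}. Set
\[ g(\bm{x}_{\mathfrak{u}})=f_{\mathfrak{u}}^{(\bm{\tau}_{\mathfrak{u}})}(\bm{x}_{\mathfrak{u}})\prod_{j\in\mathfrak{u}}W(k^{+}_{j,\alpha})(x_j). \]
Then $|\hat f(\bm{k})|$ equals the modulus of the Walsh coefficient of $g$ at $(\bm{k}^{-}_{\mathfrak{v}},\bm{0}_{\mathfrak{w}})$.

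The key structural point is that $W(k^{+}_{j,\alpha})$ is periodic with period $b^{-a_\nu+1}$, where $a_\nu=k_{j,(\alpha)}>\ell_j$ for $j\in\mathfrak{v}$. Hence $W$ is constant-pattern on the scale $b^{-\ell_j}$ of the Walsh functions being tested; more precisely, it is periodic with period dividing $b^{-\ell_j}$. Next, fix $\bm{x}_{\mathfrak{w}}$ and apply Parseval/Bessel in the $\mathfrak{v}$ variables restricted to the frequency band $A(\mathfrak{v},\bm{\ell}_{\mathfrak{v}})$. The standard identity is that the sum of $|\hat h(\bm{k})|^2$ over $k_j\in[b^{\ell_j-1},b^{\ell_j})$ equals the $L^2$ norm of a mixed difference operator: $h$ averaged over $b$-adic boxes of side $b^{-\ell_j}$, minus its averages over the parent boxes of side $b^{-\ell_j+1}$, applied in each coordinate of $\mathfrak{v}$. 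Each such one-dimensional operator is a combination of differences of $h$ across $b$ subintervals of a box of side $b^{-\ell_j+1}$. The periodicity of $W$ lets us pull $W$ out of these local differences, since $W$ takes the same values on each sub-box pattern, at the cost of $\|W\|_{L^\infty}^2$. What remains is expressed through the mixed differences $\Delta(f^{(\bm\tau)}_{\mathfrak{u}}(\cdot,\bm{x}_{\mathfrak{w}}),J)$ over boxes $J$ of volume about $b^{-|\bm\ell_{\mathfrak{v}}|_1}$. The Walsh character of the fixed part $\bm{k}^{+}$ on $\mathfrak{w}$ is handled by Cauchy--Schwarz in $\bm{x}_{\mathfrak{w}}$, which produces the outer integral.

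We then compare with the definition of $V_p$. The grid of boxes at level $\bm\ell_{\mathfrak{v}}$ (finer sub-boxes chosen appropriately) is a partition, so
\[ \sum_J \mathrm{Vol}(J)\,|\Delta|^2 \le \mathrm{Vol}(J)^{2p}\, V_p^2 . \]
The average-minus-parent operator in each coordinate involves a sum over the $b$ children of local differences. Bounding it by Cauchy--Schwarz in the $b-1$ nontrivial frequency digits gives a factor $(b-1)^{2p-1}$ when $p>1/2$, and the factor $1$ when $p\le1/2$ via a Jensen/concavity argument. Tracking the normalisation produces $b^{-2p|\bm\ell_{\mathfrak{v}}|_1}$.

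The remaining power $b^{-2p|\bm\ell_{\mathfrak{w}}|_1}$ is trivial because $\ell_j=0$ on $\mathfrak{w}$, so $|\bm\ell_{\mathfrak{u}}|_1=|\bm\ell_{\mathfrak{v}}|_1$.

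The main obstacle is the local-difference step: writing the band sum of squared Walsh coefficients exactly as an $L^2$ quantity built from $s$-th order mixed differences over a genuine partition, while commuting the periodic weight $W$ through, so that the constant is $(b-1)^{(2p-1)_+|\mathfrak{v}|}$ rather than something like $b^{|\mathfrak{v}|}$. The approach would be to first do the one-dimensional case $|\mathfrak{v}|=1$ carefully with the discrete Fourier transform on $\mathbb{F}_b$ over the $b$ children, and then tensorise over $\mathfrak{v}$.
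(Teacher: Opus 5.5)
Your outline follows the paper's proof step for step. Both project onto $\mathfrak u$. Both write the band sum as $\bigl\|\prod_{j\in\mathfrak v}(E_{\ell_j}-E_{\ell_j-1})\,g\bigr\|_{L^2}^2$, where $E_\ell$ is the conditional expectation on $b$-adic intervals of length $b^{-\ell}$; this is the paper's inclusion--exclusion over $\mathfrak z\subseteq\mathfrak v$ combined with the Walsh--Dirichlet kernel. Both rewrite child-minus-parent averages as differences between sibling cells, and both pull $W$ out of $\Delta$ because its period $b^{-k_{j,(\alpha)}+1}$ divides $b^{-\ell_j}$. (A small correction: on $\mathfrak w$ there is no Walsh character, only the factor $W$. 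The outer integral comes from Jensen in $\bm x_{\mathfrak w}$ together with $|W_{\mathfrak w}|\le\|W_{\mathfrak w}\|_{L^\infty}$.)

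\textbf{The gap.} The step you flag as the main obstacle cannot be closed with the constant $(b-1)^{(2p-1)_+|\mathfrak v|}$.

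\emph{Why the argument fails.} The sibling differences are mixed differences over boxes with sides $|h_j-d_j|\,b^{-\ell_j}$, which can be as large as $(b-1)b^{-\ell_j}$. These boxes are not cells of the level-$\bm\ell$ grid. For $b\ge 3$, the boxes attached to different children of one parent overlap; for example, child $0$ to child $2$ and child $1$ to child $2$. Hence $\sum_J \mathrm{Vol}(J)\,|\Delta/\mathrm{Vol}(J)^p|^2$ over them is not bounded by $V_p^2$. Neither Cauchy--Schwarz over the $b-1$ nontrivial digits nor a concavity argument for $p\le 1/2$ produces $(b-1)^{(2p-1)_+}$.

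\emph{The inequality itself is false for large $b$.} Take $s=\alpha=1$, $\bm k^+_\alpha=b$, $\ell=1$ and $f(x)=\sin(\pi x)/\pi$.
\begin{itemize}
\item Integration by parts gives $\hat f(b+k)=-\widehat{G}(k)$ for $1\le k<b$, with $G=f'\,W(b)$. So the left side equals $\|(E_1-E_0)G\|_{L^2}^2$.
\item $W(b)$ has period $1/b$ and mean $c=b^{-2}/(1-\omega_b^{-1})$ over each period. Also $\|W(b)\|_{L^\infty}\le 2|c|$, because $W(b)$ traces a regular polygon through $0$ inscribed in the circle of radius $|c|$ centred at $c$.
\item The cell averages of $G$ are $c\cos(\pi d/b)+\epsilon_d$ with $|\epsilon_d|\le 2\pi|c|/b$. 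Hence the left side is at least $\bigl(\sqrt{1/2-b^{-2}}-2\pi/b\bigr)^2|c|^2$.
\item Since $V_p(f')\le\|f''\|_{L^2}=\pi/\sqrt2$ for every $p\in(0,1]$, the right side is at most $2\pi^2(b-1)^{(2p-1)_+}b^{-2p}|c|^2$. This tends to $0$ relative to $|c|^2$.
\item For $p=1$ and $b=101$, the two bounds are $\ge 0.41|c|^2$ and $\le 0.2|c|^2$.
\end{itemize}
Structurally, frequencies in $[b^{\ell-1},b^{\ell})$ also resolve the parent scale $b^{-\ell+1}$. So the constant in front of $b^{-2p|\bm\ell|_1}$ must grow at least like $b^{2p}$ per coordinate of $\mathfrak v$.

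\textbf{The paper's proof has the same defect.} In its last inequality, with $\bm h^*_{\mathfrak v}$ fixed, the boxes $J_{\bm d_{\mathfrak v},\bm h^*_{\mathfrak v},\bm x_{\mathfrak v}}$ for different $\bm d_{\mathfrak v}$ overlap once $b\ge 3$. The sum over $\bm a_{\mathfrak v}$ is therefore not a sum over a partition. The argument is sound for $b=2$, where each parent cell carries a single nondegenerate box.

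\textbf{A correct version of your plan.}
\begin{enumerate}
\item In each coordinate, use $\sum_d|A_d-\bar A|^2=b^{-1}\sum_{d<d'}|A_d-A_{d'}|^2$.
\item Telescope $A_{d'}-A_d$ into adjacent-cell differences. This gives $I-M\preceq\kappa_b D^{*}D$ on $\mathbb{C}^b$, where $M$ is the sibling average and $D$ the forward difference.
\item Tensorise over $\mathfrak v$.
\item Integrate over a common offset, so that the adjacent-cell boxes of side $b^{-\ell_j}$ are pairwise disjoint.
\end{enumerate}
This proves the lemma with $(b-1)^{(2p-1)_+|\mathfrak v|}$ replaced by $\kappa_b^{|\mathfrak v|}$, where $\kappa_b=\max_i b^{-1}\sum_{d\le i<d'}(d'-d)$. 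This equals $1/2$ for $b=2$ and $1$ for $b=3$, and is of order $b^2$ in general. Downstream it only alters $C_{\alpha,p}$.
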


\begin{proof}
Since $\ell_j > 0$ implies that $k_j$ has at least $\alpha+1$ non-zero digits, we see that $\tau_j=\alpha$ for the corresponding coordinates $j\in \mathfrak{v}$. Thus, we naturally have $\mathfrak{v} \subseteq \mathfrak{u}$. For any $\bm{k} \in B_{\alpha+1,\bm{\ell},s}$ with a fixed $\bm{k}_{\alpha}^{+}$ supported on $\mathfrak{u}$, we have $k_j = 0$ for all $j \notin \mathfrak{u}$. Consequently, the Walsh functions evaluate to $1$ for these inactive coordinates, and we can directly project the function onto $\mathfrak{u}$ by integrating out as:
$$\hat{f}(\bm{k}) = \int_{[0, 1)^{|\mathfrak{u}|}} \left( \int_{[0, 1)^{s-|\mathfrak{u}|}} f(\bm{x}_{\mathfrak{u}}, \bm{x}_{-\mathfrak{u}}) \, \mathrm{d}\bm{x}_{-\mathfrak{u}} \right) \prod_{j\in \mathfrak{u}}\overline{\bwal{k_j}(x_j)} \, \mathrm{d}\bm{x}_{\mathfrak{u}} = \widehat{f_{\mathfrak{u}}}(\bm{k}_{\mathfrak{u}}).$$

For all $j \in \mathfrak{w}$, it holds that ${k_j}_{\alpha}^{-}=0$ since $\ell_j = 0$. Thus, the summation over $\bsk_{\alpha}^{-}$ only varies in the coordinates in $\mathfrak{v}$. Define the following function over $[0, 1)^{|\mathfrak{u}|}$:
\begin{equation}
    \label{eq:beta_function_def}
    \beta_{\bm{\ell}_{\mathfrak{u}}}(\bm{t}_{\mathfrak{u}};(\bm{k}_{\alpha}^{+})_{\mathfrak{u}}) = \sum_{(\bsk_{\alpha}^{-})_{\mathfrak{u}}\in T_{1,\bm{\ell}_{\mathfrak{u}},|\mathfrak{u}|}} \widehat{f_{\mathfrak{u}}}(\bm{k}_{\mathfrak{u}})\, \bwal{(\bsk_{\alpha}^{-})_{\mathfrak{u}}} (\bm{t}_{\mathfrak{u}}).
\end{equation}
By applying the inclusion-exclusion formula over $\mathfrak{v}$, we obtain
\begin{align}
    \sum_{\substack{\bsk\in B_{\alpha+1,\bm{\ell},s}\\ \text{with fixed $\bm{k}_{\alpha}^{+}$}}} \abs{\hat{f}(\bm{k})}^2 = \sum_{(\bsk_{\alpha}^{-})_{\mathfrak{u}}\in B_{1,\bm{\ell}_{\mathfrak{u}},|\mathfrak{u}|}} \abs{\widehat{f_{\mathfrak{u}}}(\bm{k}_{\mathfrak{u}})}^2 = \int_{[0, 1)^{|\mathfrak{u}|}} \left(  \sum_{\mathfrak{z} \subseteq \mathfrak{v}} (-1)^{\abs{\mathfrak{z}}} \beta_{\bm{\ell}_{\mathfrak{u}} - \bm{1}_{\mathfrak{z}} } (\bm{t}_{\mathfrak{u}};(\bm{k}_{\alpha}^{+})_{\mathfrak{u}}) \right)^2\, \mathrm{d} \bm{t}_{\mathfrak{u}}.
\end{align}

We substitute~\eqref{eq:walsh_coef} into~\eqref{eq:beta_function_def} and apply the Walsh--Dirichlet kernel to obtain
\begin{align*}
    & \beta_{\bm{\ell}_{\mathfrak{u}}}(\bm{t}_{\mathfrak{u}};(\bm{k}_{\alpha}^{+})_{\mathfrak{u}}) \\
    &=  \sum_{(\bsk_{\alpha}^{-})_{\mathfrak{u}}\in T_{1,\bm{\ell}_{\mathfrak{u}},|\mathfrak{u}|}} (-1)^{|\bm{\tau}_{\mathfrak{u}}|_1} \int_{[0, 1)^{|\mathfrak{u}|}} f_{\mathfrak{u}}^{(\bm{\tau}_{\mathfrak{u}})} (\bm{x}_{\mathfrak{u}}) \overline{\bwal{(\bsk_{\alpha}^{-})_{\mathfrak{u}}}(\bm{x}_{\mathfrak{u}})} \bwal{(\bsk_{\alpha}^{-})_{\mathfrak{u}}} (\bm{t}_{\mathfrak{u}}) W((\bm{k}^{+}_{\alpha})_{\mathfrak{u}})(\bm{x}_{\mathfrak{u}}) \, \mathrm{d}\bm{x}_{\mathfrak{u}} \\
    &= (-1)^{|\bm{\tau}_{\mathfrak{u}}|_1} \int_{[0, 1)^{|\mathfrak{u}|}} f_{\mathfrak{u}}^{(\bm{\tau}_{\mathfrak{u}})} (\bm{x}_{\mathfrak{u}}) W((\bm{k}^{+}_{\alpha})_{\mathfrak{u}})(\bm{x}_{\mathfrak{u}}) \sum_{(\bsk_{\alpha}^{-})_{\mathfrak{u}}\in T_{1,\bm{\ell}_{\mathfrak{u}},|\mathfrak{u}|}}  \bwal{(\bsk_{\alpha}^{-})_{\mathfrak{u}}} (\bm{t}_{\mathfrak{u}}\ominus \bm{x}_{\mathfrak{u}})  \, \mathrm{d}\bm{x}_{\mathfrak{u}}\\
    &= (-1)^{|\bm{\tau}_{\mathfrak{u}}|_1} b^{\abs{\bm{\ell}_{\mathfrak{u}}}_1} \int_{[0, 1)^{|\mathfrak{u}|}} f_{\mathfrak{u}}^{(\bm{\tau}_{\mathfrak{u}})} (\bm{x}_{\mathfrak{u}}) W((\bm{k}^{+}_{\alpha})_{\mathfrak{u}})(\bm{x}_{\mathfrak{u}}) \mathbbm{1}_{[0, b^{-\bm{\ell}_{\mathfrak{u}}})}(\bm{t}_{\mathfrak{u}}\ominus \bm{x}_{\mathfrak{u}}) \, \mathrm{d}\bm{x}_{\mathfrak{u}}.
\end{align*}

Given $\bm{\ell}_{\mathfrak{u}}$ and $\bm{a}_{\mathfrak{u}}\in \natu_0^{|\mathfrak{u}|}$ with $0\le a_j<b^{\ell_j}$, we write
\begin{equation}
    c_{\bm{\ell}_{\mathfrak{u}}, \bm{a}_{\mathfrak{u}}} = \int_{[\bm{a}_{\mathfrak{u}}b^{-\bm{\ell}_{\mathfrak{u}}}, (\bm{a}_{\mathfrak{u}} + \bm{1}_{\mathfrak{u}}) b^{-\bm{\ell}_{\mathfrak{u}}})} f_{\mathfrak{u}}^{(\bm{\tau}_{\mathfrak{u}})} (\bm{x}_{\mathfrak{u}}) W((\bm{k}^{+}_{\alpha})_{\mathfrak{u}})(\bm{x}_{\mathfrak{u}}) \, \mathrm{d}\bm{x}_{\mathfrak{u}},
\end{equation}
Let $\bm{t}_{\mathfrak{u}} \in [\bm{a}_{\mathfrak{u}}b^{-{\bm{\ell}_{\mathfrak{u}}}}, (\bm{a}_{\mathfrak{u}} + \bm{1}_{\mathfrak{u}}) b^{-\bm{\ell}_{\mathfrak{u}}})$, then we have
\begin{align}
    \label{eq:beta_ell_function}
    \beta_{\bm{\ell}_{\mathfrak{u}}}(\bm{t}_{\mathfrak{u}};(\bm{k}_{\alpha}^{+})_{\mathfrak{u}}) & = (-1)^{|\bm{\tau}_{\mathfrak{u}}|_1} b^{\abs{\bm{\ell}_{\mathfrak{u}}}_1} \int_{[\bm{a}_{\mathfrak{u}}b^{-{\bm{\ell}_{\mathfrak{u}}}}, (\bm{a}_{\mathfrak{u}} + \bm{1}_{\mathfrak{u}}) b^{-\bm{\ell}_{\mathfrak{u}}})} f_{\mathfrak{u}}^{(\bm{\tau}_{\mathfrak{u}})} (\bm{x}_{\mathfrak{u}}) W((\bm{k}^{+}_{\alpha})_{\mathfrak{u}})(\bm{x}_{\mathfrak{u}}) \, \mathrm{d}\bm{x}_{\mathfrak{u}} \\
    & = (-1)^{|\bm{\tau}_{\mathfrak{u}}|_1} b^{\abs{\bm{\ell}_{\mathfrak{u}}}_1} c_{\bm{\ell}_{\mathfrak{u}}, \bm{a}_{\mathfrak{u}}} ,
\end{align}
and, similarly, for any non-empty $\mathfrak{z} \subseteq \mathfrak{v}$,
\begin{align*}
    \beta_{\bm{\ell}_{\mathfrak{u}} - \bm{1}_{\mathfrak{z}} } (\bm{t}_{\mathfrak{u}};(\bm{k}_{\alpha}^{+})_{\mathfrak{u}}) = (-1)^{|\bm{\tau}_{\mathfrak{u}}|_1} b^{\abs{\bm{\ell}_{\mathfrak{u}}}_1-\abs{\mathfrak{z}}}  c_{\bm{\ell}_{\mathfrak{u}} - \bm{1}_{\mathfrak{z}} , (\floor{\bm{a}_{\mathfrak{z}} / b}, \bm{a}_{\mathfrak{u}\setminus \mathfrak{z}})}.
\end{align*}
Let $A_{\bm{\ell}_{\mathfrak{u}}} = \{\bm{a}_{\mathfrak{u}} \in \mathbb{N}_0^{|\mathfrak{u}|} \mid 0 \leq a_j < b^{\ell_j}, j \in \mathfrak{u} \}$. Note that $a_j = 0$ identically for all $j \in \mathfrak{w}$. The above consideration leads to
\begin{equation}
   \sum_{(\bsk_{\alpha}^{-})_{\mathfrak{u}}\in B_{1,\bm{\ell}_{\mathfrak{u}},|\mathfrak{u}|}} \abs{\widehat{f_{\mathfrak{u}}}(\bm{k}_{\mathfrak{u}})}^2 = b^{\abs{\bm{\ell}_{\mathfrak{u}}}_1} \sum_{\bm{a}_{\mathfrak{u}} \in A_{\bm{\ell}_{\mathfrak{u}}}} \abs*{\sum_{\mathfrak{z} \subseteq \mathfrak{v}} (-1)^{\abs{\mathfrak{z}}} b^{-\abs{\mathfrak{z}}} c_{\bm{\ell}_{\mathfrak{u}} - \bm{1}_{\mathfrak{z}}, (\floor{\bm{a}_{\mathfrak{z}}/b}, \bm{a}_{\mathfrak{u} \setminus \mathfrak{z}} ) } }^2.
\end{equation}

Following \cite[Chapter 13, page 429]{DP10}, define $\bm{e}_{\mathfrak{u}} = b \floor{\bm{a}_{\mathfrak{u}} / b}$ and $\bm{d}_{\mathfrak{u}} = \bm{a}_{\mathfrak{u}} - \bm{e}_{\mathfrak{u}}$. Then we have
\begin{align*}
    & \sum_{\mathfrak{z} \subseteq \mathfrak{v}} (-1)^{\abs{\mathfrak{z}}} b^{-\abs{\mathfrak{z}}} c_{\bm{\ell}_{\mathfrak{u}} - \bm{1}_{\mathfrak{z}}, (\floor{\bm{a}_{\mathfrak{z}}/b}, \bm{a}_{\mathfrak{u} \setminus \mathfrak{z}} ) } \\
    & = \sum_{\mathfrak{z} \subseteq \mathfrak{v}} (-1)^{\abs{\mathfrak{z}}} b^{-\abs{\mathfrak{z}}} \sum_{\bm{h}_{\mathfrak{z}} \in A_{\bm{1}_{\mathfrak{z}}}} c_{\bm{\ell}_{\mathfrak{u}}, \bm{e}_{\mathfrak{u}}+(\bm{h}_{\mathfrak{z}}, \bm{d}_{\mathfrak{u}\setminus \mathfrak{z}})} \\
    & = \sum_{\mathfrak{z} \subseteq \mathfrak{v}} (-1)^{\abs{\mathfrak{z}}} b^{-\abs{\mathfrak{z}}}b^{-\abs{\mathfrak{v}}+\abs{\mathfrak{z}}} \sum_{\bm{h}_{\mathfrak{v}} \in A_{\bm{1}_{\mathfrak{v}}}} c_{\bm{\ell}_{\mathfrak{u}}, \bm{e}_{\mathfrak{u}}+(\bm{h}_{\mathfrak{z}}, \bm{d}_{\mathfrak{u}\setminus \mathfrak{z}})} \\
    & = b^{-\abs{\mathfrak{v}}}\sum_{\bm{h}_{\mathfrak{v}} \in A_{\bm{1}_{\mathfrak{v}}}}\sum_{\mathfrak{z} \subseteq \mathfrak{v}} (-1)^{\abs{\mathfrak{z}}}   c_{\bm{\ell}_{\mathfrak{u}}, \bm{e}_{\mathfrak{u}}+(\bm{h}_{\mathfrak{z}}, \bm{d}_{\mathfrak{u}\setminus \mathfrak{z}})} \\
    & = b^{-\abs{\mathfrak{v}}}\sum_{\bm{h}_{\mathfrak{v}} \in A_{\bm{1}_{\mathfrak{v}}}} \int_{[\bm{a}_{\mathfrak{u}}b^{-\bm{\ell}_{\mathfrak{u}}}, (\bm{a}_{\mathfrak{u}} + \bm{1}_{\mathfrak{u}}) b^{-\bm{\ell}_{\mathfrak{u}}})} \\
    & \qquad \qquad \qquad \sum_{\mathfrak{z} \subseteq \mathfrak{v}} (-1)^{\abs{\mathfrak{z}}} f_{\mathfrak{u}}^{(\bm{\tau}_{\mathfrak{u}})} (\bm{x}_{\mathfrak{u}}+b^{-\bm{\ell}_{\mathfrak{u}}}(\bm{h}_{\mathfrak{z}}, \bm{d}_{\mathfrak{u}\setminus \mathfrak{z}})) W((\bm{k}^{+}_{\alpha})_{\mathfrak{u}})(\bm{x}_{\mathfrak{u}}+b^{-\bm{\ell}_{\mathfrak{u}}}(\bm{h}_{\mathfrak{z}}, \bm{d}_{\mathfrak{u}\setminus \mathfrak{z}})) \, \mathrm{d}\bm{x}_{\mathfrak{u}}\\
    & = b^{-\abs{\mathfrak{v}}}\sum_{\bm{h}_{\mathfrak{v}} \in A_{\bm{1}_{\mathfrak{v}}}} \int_{[\bm{a}_{\mathfrak{u}}b^{-\bm{\ell}_{\mathfrak{u}}}, (\bm{a}_{\mathfrak{u}} + \bm{1}_{\mathfrak{u}}) b^{-\bm{\ell}_{\mathfrak{u}}})}\pm \Delta(f_{\mathfrak{u}}^{(\bm{\tau}_{\mathfrak{u}})}(\cdot)W((\bm{k}^{+}_{\alpha})_{\mathfrak{u}})(\cdot), J_{\bm{d}_{\mathfrak{v}},\bm{h}_{\mathfrak{v}},\bm{x}_{\mathfrak{v}}})\, \mathrm{d}\bm{x}_{\mathfrak{u}},
\end{align*}
where we have defined
\[ J_{\bm{d}_{\mathfrak{v}},\bm{h}_{\mathfrak{v}},\bm{x}_{\mathfrak{v}}} \coloneqq \prod_{j\in \mathfrak{v}}\left[\frac{x_j-\min(h_j-d_j,0)}{b^{\ell_j}}, \frac{x_j+\max(h_j-d_j,0)}{b^{\ell_j}} \right),\]
and the sign in the integral depends on $J_{\bm{d}_{\mathfrak{v}},\bm{h}_{\mathfrak{v}},\bm{x}_{\mathfrak{v}}}$. 

For fixed $\bm{x}_{\mathfrak{v}}$, let us write
\begin{align*}
    g(\bm{x}_{\mathfrak{v}}) & \coloneqq \int_{[\bm{a}_{\mathfrak{w}}b^{-\bm{\ell}_{\mathfrak{w}}}, (\bm{a}_{\mathfrak{w}} + \bm{1}_{\mathfrak{w}}) b^{-\bm{\ell}_{\mathfrak{w}}})}f_{\mathfrak{u}}^{(\bm{\tau}_{\mathfrak{u}})}(\bm{x}_{\mathfrak{v}},\bm{x}_{\mathfrak{w}})W((\bm{k}^{+}_{\alpha})_{\mathfrak{w}})(\bm{x}_{\mathfrak{w}}) \, \mathrm{d}\bm{x}_{\mathfrak{w}}\\
    & \: = \int_{[0,1)^{|\mathfrak{w}|}}f_{\mathfrak{u}}^{(\bm{\tau}_{\mathfrak{u}})}(\bm{x}_{\mathfrak{v}},\bm{x}_{\mathfrak{w}})W((\bm{k}^{+}_{\alpha})_{\mathfrak{w}})(\bm{x}_{\mathfrak{w}}) \, \mathrm{d}\bm{x}_{\mathfrak{w}},
\end{align*}
where the second equality comes from the fact that $\ell_j=0$ for all $j\in \mathfrak{w}$.
Since the difference operator acts only on the variables $\bm{x}_{\mathfrak{v}}$ independently of the remaining ones $\bm{x}_{\mathfrak{w}}$, the integration and the difference operator can be reordered, giving 
\begin{align*}
    & \sum_{\mathfrak{z} \subseteq \mathfrak{v}} (-1)^{\abs{\mathfrak{z}}} b^{-\abs{\mathfrak{z}}} c_{\bm{\ell}_{\mathfrak{u}} - \bm{1}_{\mathfrak{z}}, (\floor{\bm{a}_{\mathfrak{z}}/b}, \bm{a}_{\mathfrak{u} \setminus \mathfrak{z}} ) } \\
    & = b^{-\abs{\mathfrak{v}}}\sum_{\bm{h}_{\mathfrak{v}} \in A_{\bm{1}_{\mathfrak{v}}}} \int_{[\bm{a}_{\mathfrak{v}}b^{-\bm{\ell}_{\mathfrak{v}}}, (\bm{a}_{\mathfrak{v}} + \bm{1}_{\mathfrak{v}}) b^{-\bm{\ell}_{\mathfrak{v}}})}\pm \Delta\left( g(\cdot) W((\bm{k}^{+}_{\alpha})_{\mathfrak{v}})(\cdot), J_{\bm{d}_{\mathfrak{v}},\bm{h}_{\mathfrak{v}},\bm{x}_{\mathfrak{v}}}\right)\, \mathrm{d}\bm{x}_{\mathfrak{v}} \\
    & = b^{-\abs{\mathfrak{v}}}\sum_{\bm{h}_{\mathfrak{v}} \in A_{\bm{1}_{\mathfrak{v}}}} \int_{[\bm{a}_{\mathfrak{v}}b^{-\bm{\ell}_{\mathfrak{v}}}, (\bm{a}_{\mathfrak{v}} + \bm{1}_{\mathfrak{v}}) b^{-\bm{\ell}_{\mathfrak{v}}})}\pm \Delta\left(g, J_{\bm{d}_{\mathfrak{v}},\bm{h}_{\mathfrak{v}},\bm{x}_{\mathfrak{v}}}\right)W((\bm{k}^{+}_{\alpha})_{\mathfrak{v}})(\bm{x}_{\mathfrak{v}})\, \mathrm{d}\bm{x}_{\mathfrak{v}}\\
    & = b^{-\abs{\mathfrak{v}}}\sum_{\bm{h}_{\mathfrak{v}} \in A_{\bm{1}_{\mathfrak{v}}}} \int_{[\bm{a}_{\mathfrak{u}}b^{-\bm{\ell}_{\mathfrak{u}}}, (\bm{a}_{\mathfrak{u}} + \bm{1}_{\mathfrak{u}}) b^{-\bm{\ell}_{\mathfrak{u}}})} \pm \Delta\left(f_{\mathfrak{u}}^{(\bm{\tau}_{\mathfrak{u}})}(\cdot,\bm{x}_{\mathfrak{w}}), J_{\bm{d}_{\mathfrak{v}},\bm{h}_{\mathfrak{v}},\bm{x}_{\mathfrak{v}}}\right)W((\bm{k}^{+}_{\alpha})_{\mathfrak{u}})(\bm{x}_{\mathfrak{u}})\, \mathrm{d}\bm{x}_{\mathfrak{u}},
\end{align*}
where, in the second equality, we used the fact that the function $W({k_j}^{+}_{\alpha})$ has a period of $b^{-\nu_j+1}$, where $\nu_j\ge \ell_j+1$ is the position of the least significant non-zero digit in the $b$-adic expansion of ${k_j}^{+}_{\alpha}$. Since $h_j$ and $d_j$ are integers, the shift amount $b^{-\ell_j}(h_j - d_j)$ incurred by the difference operator $\Delta$ over the box $J_{\bm{d}_{\mathfrak{v}},\bm{h}_{\mathfrak{v}},\bm{x}_{\mathfrak{v}}}$ is exactly an integer multiple of the period $b^{-\nu_j+1}$. Therefore, $W((\bm{k}^{+}_{\alpha})_{\mathfrak{w}})$ takes the same value at all vertices of the region defining the difference operator and can be factored out of $\Delta$. Therefore, we further have
\begin{align*}
    & \sum_{(\bsk_{\alpha}^{-})_{\mathfrak{u}}\in B_{1,\bm{\ell}_{\mathfrak{u}},|\mathfrak{u}|}} \abs{\widehat{f_{\mathfrak{u}}}(\bm{k}_{\mathfrak{u}})}^2 \\
    & = b^{\abs{\bm{\ell}_{\mathfrak{u}}}_1-2\abs{\mathfrak{v}}} \sum_{\bm{a}_{\mathfrak{v}} \in A_{\bm{\ell}_{\mathfrak{v}}}} \abs*{\sum_{\bm{h}_{\mathfrak{v}} \in A_{\bm{1}_{\mathfrak{v}}}} \int_{[\bm{a}_{\mathfrak{u}}b^{-\bm{\ell}_{\mathfrak{u}}}, (\bm{a}_{\mathfrak{u}} + \bm{1}_{\mathfrak{u}}) b^{-\bm{\ell}_{\mathfrak{u}}})} \pm \Delta\left(f_{\mathfrak{u}}^{(\bm{\tau}_{\mathfrak{u}})}(\cdot,\bm{x}_{\mathfrak{w}}), J_{\bm{d}_{\mathfrak{v}},\bm{h}_{\mathfrak{v}},\bm{x}_{\mathfrak{v}}}\right)W((\bm{k}^{+}_{\alpha})_{\mathfrak{u}})(\bm{x}_{\mathfrak{u}})\, \mathrm{d}\bm{x}_{\mathfrak{u}}}^2\\
    & \le b^{\abs{\bm{\ell}_{\mathfrak{u}}}_1-2\abs{\mathfrak{v}}}  \sum_{\bm{h}_{\mathfrak{v}},\bm{h}'_{\mathfrak{v}} \in A_{\bm{1}_{\mathfrak{v}}}} \sum_{\bm{a}_{\mathfrak{v}} \in A_{\bm{\ell}_{\mathfrak{v}}}} \int_{[\bm{a}_{\mathfrak{u}}b^{-\bm{\ell}_{\mathfrak{u}}}, (\bm{a}_{\mathfrak{u}} + \bm{1}_{\mathfrak{u}}) b^{-\bm{\ell}_{\mathfrak{u}}})} \pm \Delta\left(f_{\mathfrak{u}}^{(\bm{\tau}_{\mathfrak{u}})}(\cdot,\bm{x}_{\mathfrak{w}}), J_{\bm{d}_{\mathfrak{v}},\bm{h}_{\mathfrak{v}},\bm{x}_{\mathfrak{v}}}\right)W((\bm{k}^{+}_{\alpha})_{\mathfrak{u}})(\bm{x}_{\mathfrak{u}})\, \mathrm{d}\bm{x}_{\mathfrak{u}} \\
    & \qquad \qquad \qquad \qquad\times \int_{[\bm{a}_{\mathfrak{u}}b^{-\bm{\ell}_{\mathfrak{u}}}, (\bm{a}_{\mathfrak{u}} + \bm{1}_{\mathfrak{u}}) b^{-\bm{\ell}_{\mathfrak{u}}})} \pm \Delta\left(f_{\mathfrak{u}}^{(\bm{\tau}_{\mathfrak{u}})}(\cdot,\bm{x}_{\mathfrak{w}}), J_{\bm{d}_{\mathfrak{v}},\bm{h}'_{\mathfrak{v}},\bm{x}_{\mathfrak{v}}}\right)W((\bm{k}^{+}_{\alpha})_{\mathfrak{u}})(\bm{x}_{\mathfrak{u}})\, \mathrm{d}\bm{x}_{\mathfrak{u}}.
\end{align*}

Each integral is bounded above by
\begin{align*}
    & \int_{[\bm{a}_{\mathfrak{u}}b^{-\bm{\ell}_{\mathfrak{u}}}, (\bm{a}_{\mathfrak{u}} + \bm{1}_{\mathfrak{u}}) b^{-\bm{\ell}_{\mathfrak{u}}})} \pm \Delta\left(f_{\mathfrak{u}}^{(\bm{\tau}_{\mathfrak{u}})}(\cdot,\bm{x}_{\mathfrak{w}}), J_{\bm{d}_{\mathfrak{v}},\bm{h}_{\mathfrak{v}},\bm{x}_{\mathfrak{v}}}\right)W((\bm{k}^{+}_{\alpha})_{\mathfrak{u}})(\bm{x}_{\mathfrak{u}})\, \mathrm{d}\bm{x}_{\mathfrak{u}} \\
    & \le \norm{W((\bm{k}^{+}_{\alpha})_{\mathfrak{u}})}_{L^{\infty}([0, 1)^{|\mathfrak{u}|})}\int_{[\bm{a}_{\mathfrak{u}}b^{-\bm{\ell}_{\mathfrak{u}}}, (\bm{a}_{\mathfrak{u}} + \bm{1}_{\mathfrak{u}}) b^{-\bm{\ell}_{\mathfrak{u}}})} \left| \Delta\left(f_{\mathfrak{u}}^{(\bm{\tau}_{\mathfrak{u}})}(\cdot,\bm{x}_{\mathfrak{w}}), J_{\bm{d}_{\mathfrak{v}},\bm{h}_{\mathfrak{v}},\bm{x}_{\mathfrak{v}}}\right)\right| \, \mathrm{d}\bm{x}_{\mathfrak{u}}\\
    & \le \norm{W((\bm{k}^{+}_{\alpha})_{\mathfrak{u}})}_{L^{\infty}([0, 1)^{|\mathfrak{u}|})}b^{-\abs{\bm{\ell}_{\mathfrak{u}}}_1/2}\left(\int_{[\bm{a}_{\mathfrak{u}}b^{-\bm{\ell}_{\mathfrak{u}}}, (\bm{a}_{\mathfrak{u}} + \bm{1}_{\mathfrak{u}}) b^{-\bm{\ell}_{\mathfrak{u}}})} \left| \Delta\left(f_{\mathfrak{u}}^{(\bm{\tau}_{\mathfrak{u}})}(\cdot,\bm{x}_{\mathfrak{w}}), J_{\bm{d}_{\mathfrak{v}},\bm{h}_{\mathfrak{v}},\bm{x}_{\mathfrak{v}}}\right)\right|^2 \, \mathrm{d}\bm{x}_{\mathfrak{u}}\right)^{1/2},
\end{align*}
where we used the Cauchy--Schwarz inequality for the last bound.  Let us denote the square root of the last integral by $D_{\bm{a}_{\mathfrak{v}},\bm{h}_{\mathfrak{v}}}$. Then, summing over $\bm{a}_{\mathfrak{v}}$ and choosing $\bm{h}_{\mathfrak{v}}=\bm{h}^{*}_{\mathfrak{v}}\in A_{\bm{1}_{\mathfrak{v}}}$ that maximizes the sum $\sum_{\bm{a}_{\mathfrak{v}} \in A_{\bm{\ell}_{\mathfrak{v}}}} D^2_{\bm{a}_{\mathfrak{v}},\bm{h}_{\mathfrak{v}}}$ gives
\begin{align*}
    \sum_{(\bsk_{\alpha}^{-})_{\mathfrak{u}}\in B_{1,\bm{\ell}_{\mathfrak{u}},|\mathfrak{u}|}} \abs{\widehat{f_{\mathfrak{u}}}(\bm{k}_{\mathfrak{u}})}^2 
    & \le \norm{W((\bm{k}^{+}_{\alpha})_{\mathfrak{u}})}^2_{L^{\infty}([0, 1)^{|\mathfrak{u}|})}b^{-2\abs{\mathfrak{v}}} \sum_{\bm{h}_{\mathfrak{v}},\bm{h}'_{\mathfrak{v}} \in A_{\bm{1}_{\mathfrak{v}}}} \sum_{\bm{a}_{\mathfrak{v}} \in A_{\bm{\ell}_{\mathfrak{v}}}} D_{\bm{a}_{\mathfrak{v}},\bm{h}_{\mathfrak{v}}}D_{\bm{a}_{\mathfrak{v}},\bm{h}'_{\mathfrak{v}}}\\
    & \le \norm{W((\bm{k}^{+}_{\alpha})_{\mathfrak{u}})}^2_{L^{\infty}([0, 1)^{|\mathfrak{u}|})}\max_{\bm{h}_{\mathfrak{v}},\bm{h}'_{\mathfrak{v}} \in A_{\bm{1}_{\mathfrak{v}}}} \sum_{\bm{a}_{\mathfrak{v}} \in A_{\bm{\ell}_{\mathfrak{v}}}} D_{\bm{a}_{\mathfrak{v}},\bm{h}_{\mathfrak{v}}}D_{\bm{a}_{\mathfrak{v}},\bm{h}'_{\mathfrak{v}}}\\
    & = \norm{W((\bm{k}^{+}_{\alpha})_{\mathfrak{u}})}^2_{L^{\infty}([0, 1)^{|\mathfrak{u}|})} \max_{\bm{h}_{\mathfrak{v}} \in A_{\bm{1}_{\mathfrak{v}}}} \sum_{\bm{a}_{\mathfrak{v}} \in A_{\bm{\ell}_{\mathfrak{v}}}} D^2_{\bm{a}_{\mathfrak{v}},\bm{h}_{\mathfrak{v}}}\\
    & = \norm{W((\bm{k}^{+}_{\alpha})_{\mathfrak{u}})}^2_{L^{\infty}([0, 1)^{|\mathfrak{u}|})} \sum_{\bm{a}_{\mathfrak{v}} \in A_{\bm{\ell}_{\mathfrak{v}}}} D^2_{\bm{a}_{\mathfrak{v}},\bm{h}^*_{\mathfrak{v}}}.
\end{align*}

Again, since the difference operator acts only on the variables $\bm{x}_{\mathfrak{v}}$, we obtain
\begin{align*}
    & \sum_{\bm{a}_{\mathfrak{v}} \in A_{\bm{\ell}_{\mathfrak{v}}}} D^2_{\bm{a}_{\mathfrak{v}},\bm{h}^*_{\mathfrak{v}}} \\
    & = \sum_{\bm{a}_{\mathfrak{v}} \in A_{\bm{\ell}_{\mathfrak{v}}}}\int_{[\bm{a}_{\mathfrak{u}}b^{-\bm{\ell}_{\mathfrak{u}}}, (\bm{a}_{\mathfrak{u}} + \bm{1}_{\mathfrak{u}}) b^{-\bm{\ell}_{\mathfrak{u}}})} \left| \Delta\left(f_{\mathfrak{u}}^{(\bm{\tau}_{\mathfrak{u}})}(\cdot,\bm{x}_{\mathfrak{w}}), J_{\bm{d}_{\mathfrak{v}},\bm{h}^*_{\mathfrak{v}},\bm{x}_{\mathfrak{v}}}\right)\right|^2 \, \mathrm{d}\bm{x}_{\mathfrak{u}}\\
    & \le b^{-|\bm{\ell}_{\mathfrak{u}}|_1}\int_{[0,1)^{|\mathfrak{w}|}}\sum_{\bm{a}_{\mathfrak{v}} \in A_{\bm{\ell}_{\mathfrak{v}}}}\sup_{\bm{x}_{\mathfrak{v}}\in [\bm{a}_{\mathfrak{v}}b^{-\bm{\ell}_{\mathfrak{v}}}, (\bm{a}_{\mathfrak{v}} + \bm{1}_{\mathfrak{v}}) b^{-\bm{\ell}_{\mathfrak{v}}})}\left| \Delta\left(f_{\mathfrak{u}}^{(\bm{\tau}_{\mathfrak{u}})}(\cdot,\bm{x}_{\mathfrak{w}}), J_{\bm{d}_{\mathfrak{v}},\bm{h}^*_{\mathfrak{v}},\bm{x}_{\mathfrak{v}}}\right)\right|^2\, \mathrm{d}\bm{x}_{\mathfrak{w}}\\
    & \le (b-1)^{(2p-1)_{+}|\mathfrak{v}|}b^{-2p |\bm{\ell}_{\mathfrak{u}}|_1}\\
    & \quad \times \int_{[0,1)^{|\mathfrak{w}|}} \sum_{\bm{a}_{\mathfrak{v}} \in A_{\bm{\ell}_{\mathfrak{v}}}}\sup_{\bm{x}_{\mathfrak{u}}\in [\bm{a}_{\mathfrak{u}}b^{-\bm{\ell}_{\mathfrak{u}}}, (\bm{a}_{\mathfrak{u}} + \bm{1}_{\mathfrak{u}}) b^{-\bm{\ell}_{\mathfrak{u}}})}\mathrm{Vol}(J_{\bm{d}_{\mathfrak{v}},\bm{h}^*_{\mathfrak{v}} ,\bm{x}_{\mathfrak{v}}})\left| \frac{\Delta\left(f_{\mathfrak{u}}^{(\bm{\tau}_{\mathfrak{u}})}(\cdot,\bm{x}_{\mathfrak{w}}), J_{\bm{d}_{\mathfrak{v}},\bm{h}^*_{\mathfrak{v}},\bm{x}_{\mathfrak{v}}}\right)}{\mathrm{Vol}(J_{\bm{d}_{\mathfrak{v}},\bm{h}^*_{\mathfrak{v}} ,\bm{x}_{\mathfrak{v}}})^p}\right|^2\, \mathrm{d}\bm{x}_{\mathfrak{w}}\\
    & \le (b-1)^{(2p-1)_{+}|\mathfrak{v}|}b^{-2p |\bm{\ell}_{\mathfrak{u}}|_1} \int_{[0,1)^{|\mathfrak{w}|}} \left(V_p^{(|\mathfrak{v}|)}\left( f_{\mathfrak{u}}^{(\bm{\tau}_{\mathfrak{u}})}(\cdot,\bm{x}_{\mathfrak{w}})\right)\right)^2\, \mathrm{d}\bm{x}_{\mathfrak{w}}.
\end{align*}
This completes the proof.
\end{proof}

\subsection{Bounds for functions in weighted Sobolev spaces}

Motivated by the results for the bounds on the Walsh coefficients shown in the previous subsection, let us consider the following weighted Sobolev space.

\begin{Definition}[Weighted Sobolev--variation space]\label{def:weighted_sobolev_variation}
    Let $\alpha\in \natu$, $p\in (0,1]$, and $\bm{\gamma}=(\gamma_{\mathfrak{u}})_{\mathfrak{u}\subseteq 1{:}s}$ be a set of positive weights. We define the weighted Sobolev--variation space $\mathcal{W}_{s, \alpha, p, \bm{\gamma}}$ equipped with norm
    \begin{equation}
        \norm{f}_{{s, \alpha, p, \bm{\gamma}}} = \sum_{\frak{u} \subseteq 1{:}s} \gamma_{\frak{u}}^{-1} \left( \sum_{\frak{v} \subseteq \frak{u}} \sum_{\bm{\tau}_{\frak{u} \setminus \frak{v}} \in \{1, \dotsc, \alpha-1 \}^{\abs{\frak{u} \setminus \frak{v}}}} \int_{[0,1)^{|\frak{u} \setminus \frak{v}|}} \left(V_p^{(|\mathfrak{v}|)}\left( f_{\mathfrak{u}}^{(\bm{\tau}_{\mathfrak{u}\setminus \mathfrak{v}},\bm{\alpha}_{\mathfrak{v}})}(\cdot,\bm{x}_{\frak{u} \setminus \frak{v}})\right)\right)^2\, \mathrm{d}\bm{x}_{\frak{u} \setminus \frak{v}} \right)^{1/2},
    \end{equation}
    where $f_{\frak{u}}$ is the projection of $f$ onto the variables $\bm{x}_{\mathfrak{u}}$ by integrating out $\bm{x}_{-\mathfrak{u}}$:
    $$ f_{\mathfrak{u}}(\bm{x}_{\mathfrak{u}}) \coloneqq \int_{[0, 1)^{s-|\mathfrak{u}|}} f(\bm{x}_{\mathfrak{u}}, \bm{x}_{-\mathfrak{u}}) \, \mathrm{d}\bm{x}_{-\mathfrak{u}}. $$
\end{Definition}

It follows from the definition that 
\[ \left(\int_{[0,1)^{|\frak{u} \setminus \frak{v}|}} \left(V_p^{(|\mathfrak{v}|)}\left( f_{\mathfrak{u}}^{(\bm{\tau}_{\mathfrak{u}\setminus \mathfrak{v}},\bm{\alpha}_{\mathfrak{v}})}(\cdot,\bm{x}_{\frak{u} \setminus \frak{v}})\right)\right)^2\, \mathrm{d}\bm{x}_{\frak{u} \setminus \frak{v}}\right)^{1/2}\le \gamma_{\mathfrak{u}}\norm{f}_{{s, \alpha, p, \bm{\gamma}}}, \]
for any subsets $\mathfrak{v}\subseteq \mathfrak{u}\subseteq 1{:}s$, and Lemma~\ref{lem:bound_on_Walsh_partial_sum} implies
\[ \sum_{\substack{\bsk\in B_{\alpha+1,\bm{\ell},s}\\ \text{with fixed $\bm{k}_{\alpha}^{+}$}}} \abs{\hat{f}(\bm{k})}^2 \leq  \gamma_{\mathfrak{u}}^2 \norm{f}^2_{{s, \alpha, p, \bm{\gamma}}} \norm{W((\bm{k}^{+}_{\alpha}))}^2_{L^{\infty}([0, 1)^{s})}(b-1)^{(2p-1)_{+}|\mathfrak{v}|}b^{-2p |\bm{\ell}_{\mathfrak{u}}|_1}. \]
Moreover, using Lemma~\ref{lemma:W_linfty_bound}, we obtain
\begin{equation}
\label{eq:intermediate_result}
\begin{split}
    \sum_{\substack{\bsk\in B_{\alpha+1,\bm{\ell},s}\\ \text{with fixed $\bm{k}_{\alpha}^{+}$}}} \abs{\hat{f}(\bm{k})}^2 & \le \gamma_{\mathfrak{u}}^2 \norm{f}^2_{{s, \alpha, p, \bm{\gamma}}} (b-1)^{(2p-1)_{+}|\mathfrak{v}|}b^{-2p |\bm{\ell}_{\mathfrak{u}}|_1}\prod_{j\in \mathfrak{u}}\left(\frac{1 + \pi}{2} \right)^2 b^{2\min(\alpha, \tau_j)-2\mu_{\alpha}((k_j)_{\alpha}^{+})}\\
    & \le \gamma_{\mathfrak{u}}^2 \norm{f}^2_{{s, \alpha, p, \bm{\gamma}}}C_{\alpha,p}^{2|\mathfrak{u}|} \, b^{-2\mu_{\alpha}(\bm{k}_{\alpha}^{+})-2p |\bm{\ell}_{\mathfrak{u}}|_1} ,
\end{split}
\end{equation} 
where 
\begin{align}\label{eq:constant_C_alpha_p}
    C_{\alpha,p}=(b-1)^{(2p-1)_{+}/2}\left(\frac{1 + \pi}{2} \right) b^{\alpha}.
\end{align}
This bound trivially applies to the individual terms, which gives
\begin{align}
    \abs{\hat{f}(\bm{k})} & \le \gamma_{\mathfrak{u}} \norm{f}_{{s, \alpha, p, \bm{\gamma}}}C_{\alpha,p}^{|\mathfrak{u}|} \, b^{-\mu_{\alpha}(\bm{k})-p (\mu_{\alpha+1}(\bm{k})-\mu_{\alpha}(\bm{k}))} \notag \\
    & = \gamma_{\mathfrak{u}} \norm{f}_{{s, \alpha, p, \bm{\gamma}}}C_{\alpha,p}^{|\mathfrak{u}|} \, b^{-((1-p)\mu_{\alpha}(\bm{k})+p \mu_{\alpha+1}(\bm{k}))} \notag \\
    & \le \gamma_{\mathfrak{u}} \norm{f}_{{s, \alpha, p, \bm{\gamma}}}C_{\alpha,p}^{|\mathfrak{u}|} \, b^{-(\alpha+p) \mu_{\alpha+1}(\bm{k})/(\alpha+1)},\label{eq:walsh_bound_each_function}
\end{align}
where, for the last bound, we used the inequality $\mu_{\alpha}(k)/\alpha\ge \mu_{\alpha+1}(k)/(\alpha+1)$ that holds for any $\alpha\in \natu$ and $k\in \natu_0$.

{
\begin{remark}
    We can also connect the weighted Sobolev--variation norm to a weighted Sobolev norm. Let $\alpha\in \natu$, $1\le q< \infty$, $1 \le r < \infty$ and $\bm{\gamma}=(\gamma_{\mathfrak{u}})_{\mathfrak{u}\subseteq 1{:}s}$ be a set of positive weights. Define the weighted Sobolev space $\mathcal{W}_{s, \alpha, q, r, \bm{\gamma}}$ equipped with norm
    \begin{align*}
        & \norm{f}_{{s, \alpha, q, r, \bm{\gamma}}} = \sum_{\frak{u} \subseteq 1{:}s} \gamma_{\frak{u}}^{-1}\\
        & \times \left( \sum_{\frak{v} \subseteq \frak{u}} \sum_{\bm{\tau}_{\frak{u} \setminus \frak{v}} \in \{1, \dotsc, \alpha\}^{\abs{\frak{u} \setminus \frak{v}}}} \int_{[0,1)^{|\frak{u} \setminus \frak{v}|}} \left( \int_{[0, 1)^{|\mathfrak{v}|}} \abs*{ \int_{[0, 1)^{s - \abs{\frak{u}}}} f^{(\bm{\tau}_{\mathfrak{u}\setminus \mathfrak{v}},\bm{\alpha}_{\mathfrak{v}}, \bm{0} )} (\bm{x}) \, \mathrm{d}\bm{x}_{-\frak{u}} }^q \, \mathrm{d}\bm{x}_{\frak{v}} \right)^{r/q} \, \mathrm{d}\bm{x}_{\frak{u} \setminus \frak{v}} \right)^{1/r}. 
    \end{align*}
    Notice that this norm is an upper bound of the norm in \cite[Definition~3.3]{DKLNS14}. 
    
    From~\cite{Liu25}, we bound the Vitali variation by the Sobolev norm. For $p \in [1/2, 1]$, let $q = 2/(3 - 2p)$ and we have:
    \begin{align*}
        & \int_{[0,1)^{|\frak{u} \setminus \frak{v}|}} \left(V_p^{(|\mathfrak{v}|)}\left( f_{\mathfrak{u}}^{(\bm{\tau}_{\mathfrak{u}\setminus \mathfrak{v}},\bm{\alpha}_{\mathfrak{v}})}(\cdot,\bm{x}_{\frak{u} \setminus \frak{v}})\right)\right)^2\, \mathrm{d}\bm{x}_{\frak{u} \setminus \frak{v}}\\
        &\leq \int_{[0,1)^{|\frak{u} \setminus \frak{v}|}} \left( \int_{[0, 1)^{|\mathfrak{v}|}} \abs*{ f_{\mathfrak{u}}^{(\bm{\tau}_{\mathfrak{u}\setminus \mathfrak{v}},\bm{\alpha}_{\mathfrak{v}} + \bm{1}_{\frak{v}} )}(\bm{x}_{\frak{v}},\bm{x}_{\frak{u} \setminus \frak{v}})}^q \, \mathrm{d}\bm{x}_{\frak{v}} \right)^{2/q} \, \mathrm{d}\bm{x}_{\frak{u} \setminus \frak{v}} \\
        &= \int_{[0,1)^{|\frak{u} \setminus \frak{v}|}} \left( \int_{[0, 1)^{|\mathfrak{v}|}} \abs*{ \int_{[0, 1)^{s - \abs{\frak{u}}}} f^{(\bm{\tau}_{\mathfrak{u}\setminus \mathfrak{v}},\bm{\alpha}_{\mathfrak{v}} + \bm{1}_{\frak{v}} , \bm{0})} (\bm{x}) \, \mathrm{d}\bm{x}_{-\frak{u}} }^q \, \mathrm{d}\bm{x}_{\frak{v}} \right)^{2/q} \, \mathrm{d}\bm{x}_{\frak{u} \setminus \frak{v}}.
    \end{align*}
    Thus, 
    \begin{align*}
        \norm{f}_{{s, \alpha, p, \bm{\gamma}}} &= \sum_{\frak{u} \subseteq 1{:}s} \gamma_{\frak{u}}^{-1} \left( \sum_{\frak{v} \subseteq \frak{u}} \sum_{\bm{\tau}_{\frak{u} \setminus \frak{v}} \in \{1, \dotsc, \alpha-1 \}^{\abs{\frak{u} \setminus \frak{v}}}} \int_{[0,1)^{|\frak{u} \setminus \frak{v}|}} \left(V_p^{(|\mathfrak{v}|)}\left( f_{\mathfrak{u}}^{(\bm{\tau}_{\mathfrak{u}\setminus \mathfrak{v}},\bm{\alpha}_{\mathfrak{v}})}(\cdot,\bm{x}_{\frak{u} \setminus \frak{v}})\right)\right)^2\, \mathrm{d}\bm{x}_{\frak{u} \setminus \frak{v}} \right)^{1/2}\\
        &\leq \sum_{\frak{u} \subseteq 1{:}s} \gamma_{\frak{u}}^{-1} \left( \sum_{\frak{v} \subseteq \frak{u}} \sum_{\bm{\tau}_{\frak{u} \setminus \frak{v}} \in \{1, \dotsc, \alpha-1 \}^{\abs{\frak{u} \setminus \frak{v}}}} \right. \\
        & \left. \qquad \qquad \int_{[0,1)^{|\frak{u} \setminus \frak{v}|}} \left( \int_{[0, 1)^{|\mathfrak{v}|}} \abs*{ \int_{[0, 1)^{s - \abs{\frak{u}}}} f^{(\bm{\tau}_{\mathfrak{u}\setminus \mathfrak{v}},\bm{\alpha}_{\mathfrak{v}} + \bm{1}_{\frak{v}}, \bm{0} )} (\bm{x}) \, \mathrm{d}\bm{x}_{-\frak{u}} }^q \, \mathrm{d}\bm{x}_{\frak{v}} \right)^{2/q} \, \mathrm{d}\bm{x}_{\frak{u} \setminus \frak{v}}\right)^{1/2}\\
        & \le \norm{f}_{{s, \alpha+1, q, 2, \bm{\gamma}}}.
    \end{align*}
\end{remark}
}
\section{Error bounds: high order convergence and dimension independence}\label{sec:convergence_rates}
In this section, we discuss the convergence rates from two approaches: the median-of-means estimator and the greedy optimization of Hankel digital nets. 

\subsection{Median-of-means Hankel designs}
\label{sec:median-of-means-estimator}
In this subsection, we discuss the median-of-means estimator, in line with recent works \cite{GK26,GL22,GSM24,Pan25,Pan26,PO23,PO24}. Let $r$ be a positive integer, and let $Q_{N,1},\ldots,Q_{N,2r-1}$ be independent and identically distributed (i.i.d.) copies of the QMC rule based on a Hankel random design with a random digital shift in infinite precision. The median-of-means estimator is then defined as 
\[ Q_N^{(r)}(f)=\med\left( Q_{N,1}(f),\ldots,Q_{N,2r-1}(f)\right),\]
where the median is uniquely defined since $2r-1$ is odd. Our main contribution here is to analyze the root-mean-square error of this estimator in the weighted Sobolev--variation space $\mathcal{W}_{s, \alpha, p, \bm{\gamma}}$. A key highlight is that, without any prior knowledge of the parameters $\alpha, p$, and $\bm{\gamma}$, the estimator $Q_N^{(r)}(f)$ achieves a convergence rate of order $N^{-(\alpha+p+1/2)+\varepsilon}$ for arbitrarily small $\varepsilon>0$. Furthermore, the implied constant can be independent of the dimension $s$ under a suitable summability condition on the weights $\bm{\gamma}$.

The following argument builds upon the approach of Pan \cite{Pan26}. However, we extend the known results to a general prime base $b \ge 2$ and employ Hankel designs (rather than uniform designs) for the generating matrices, while conducting our analysis within the weighted Sobolev--variation space $\mathcal{W}_{s, \alpha, p, \bm{\gamma}}$.
Motivated by the structure of the bound on the sum of Walsh coefficients in \eqref{eq:intermediate_result}, and following~\cite{Pan26}, we define the set $K_{\frak{u}}(T)$ as
\begin{equation}
    K_{\frak{u}}(T) = \left\{ \bm{k}_{\frak{u}} \in \mathbb{N}^{\abs{\frak{u}}} \mid p \sum_{j \in \frak{u}} {k_j}_{(\alpha + 1)} + \sum_{j \in \frak{u}} \mu_{\alpha} (k_j) \leq T \right\},
\end{equation}
for a non-empty subset $\frak{u}\subseteq 1{:}s$ and a given parameter $T > 0$. In what follows, we write
    \[ B_{\alpha, \bm{\ell}_{\frak{u}}, \frak{u}} = \left\{ \bm{k}_{\frak{u}} \in \mathbb{N}^{|\frak{u}|} \mid  {\bm{k}_{\frak{u}}}_{(\alpha)} = \bm{\ell}_{\frak{u}} \right\} \quad \text{and}\quad T_{\alpha, \bm{\ell}_{\frak{u}}, \frak{u}} = \left\{ \bm{k}_{\frak{u}} \in \mathbb{N}^{|\frak{u}|} \mid  {\bm{k}_{\frak{u}}}_{(\alpha)} \le \bm{\ell}_{\frak{u}} \right\},  \]
where the inequality ${\bm{k}_{\frak{u}}}_{(\alpha)} \le \bm{\ell}_{\frak{u}}$ is understood component-wise. 

We compute the sum of squared Walsh coefficients $\abs{\hat{f} (\bm{k}_{\frak{u}}, \bm{0})}^2$ on the complement of the set $K_{\frak{u}}(T)$. For any $\theta\in (0,1)$, it holds that
\begin{align}
    &  \sum_{\bm{k}_{\frak{u}} \in \mathbb{N}^{\abs{\frak{u}}}} \abs{\hat{f} (\bm{k}_{\frak{u}}, \bm{0})}^2 \chi \left( \bm{k}_{\frak{u}} \notin  K_{\frak{u}}(T) \right) \notag \\
    &= \sum_{\bm{\ell}_{\frak{u}} \in \mathbb{N}_0^{\abs{\frak{u}}}} \sum_{\bm{k}_{\frak{u}} \in B_{\alpha+1, \bm{\ell}_{\frak{u}}, \frak{u}}} \abs{\hat{f} (\bm{k}_{\frak{u}}, \bm{0})}^2 \chi \left( \bm{k}_{\frak{u}} \notin K_{\frak{u}}(T) \right) \notag \\
    &= \sum_{\bm{\ell}_{\frak{u}} \in \mathbb{N}_0^{\abs{\frak{u}}}} \sum_{{\bm{k}_{\frak{u}}}_{\alpha}^{+} \in \prod_{j\in \frak{u}} (\mathbb{N} \setminus T_{\alpha, \ell_j, 1}) } \sum_{{\bm{k}_{\frak{u}}}_{\alpha}^{-} \in B_{1, \bm{\ell}_{\frak{u}}, \frak{u}}} \abs{\hat{f}({\bm{k}_{\frak{u}}}, \bm{0})}^2 \chi \left( \bm{k}_{\frak{u}} \notin K_{\frak{u}}(T) \right) \notag \\
    & \leq \sum_{\bm{\ell}_{\frak{u}} \in \mathbb{N}_0^{\abs{\frak{u}}}} \sum_{{\bm{k}_{\frak{u}}}_{\alpha}^{+} \in \prod_{j\in \frak{u}} (\mathbb{N} \setminus T_{\alpha, \ell_j, 1}) } \sum_{{\bm{k}_{\frak{u}}}_{\alpha}^{-} \in B_{1, \bm{\ell}_{\frak{u}}, \frak{u}}} b^{2\theta(p \sum_{j \in \frak{v}} {k_j}_{(\alpha + 1)} + \sum_{j \in \frak{u}} \mu_{\alpha} ({k_j}_{\alpha}^{+}) - T)} \abs{\hat{f}({\bm{k}_{\frak{u}}}, \bm{0})}^2 \notag \\
    & \leq \sum_{\bm{\ell}_{\frak{u}} \in \mathbb{N}_0^{\abs{\frak{u}}}} \sum_{{\bm{k}_{\frak{u}}}_{\alpha}^{+} \in \prod_{j\in \frak{u}} (\mathbb{N} \setminus T_{\alpha, \ell_j, 1}) } b^{2\theta(p \abs{\bm{\ell}_{\frak{u}} }_1 + \sum_{j \in \frak{u}} \mu_{\alpha} ({k_j}_{\alpha}^{+}) - T)}  \sum_{{\bm{k}_{\frak{u}}}_{\alpha}^{-} \in B_{1, \bm{\ell}_{\frak{u}}, \frak{u}}} \abs{\hat{f}({\bm{k}_{\frak{u}}}, \bm{0})}^2 \notag \\
    & \leq \gamma_{\frak{u}}^2 \norm{f}_{{s, \alpha, p, \bm{\gamma}}}^2   b^{-2\theta T} C_{\alpha, p}^{2\abs{\frak{u}}} \sum_{\bm{\ell}_{\frak{u}} \in \mathbb{N}_0^{\abs{\frak{u}}}} b^{2(\theta - 1) p \abs{\bm{\ell}_{\frak{u}} }_1} \sum_{{\bm{k}_{\frak{u}}}_{\alpha}^{+} \in \prod_{j\in \frak{u}} (\mathbb{N}_0 \setminus T_{\alpha, \ell_j, 1}) }  \prod_{j \in \frak{u}}  b^{2(\theta - 1) \mu_{\alpha}({k_j}_{\alpha}^{+})} \notag \\
    & = \gamma_{\frak{u}}^2 \norm{f}_{{s, \alpha, p, \bm{\gamma}}}^2   b^{-2\theta T} C_{\alpha, p}^{2\abs{\frak{u}}} \sum_{\bm{\ell}_{\frak{u}} \in \mathbb{N}_0^{\abs{\frak{u}}}} b^{2(\theta - 1) p \abs{\bm{\ell}_{\frak{u}} }_1} \prod_{j \in \frak{u}} \sum_{{{k}_{j}}_{\alpha}^{+} \in \mathbb{N} \setminus T_{\alpha, \ell_j, 1} } b^{2(\theta - 1) \mu_{\alpha}({k_j}_{\alpha}^{+})}  \notag \\
    & \leq \gamma_{\frak{u}}^2 \norm{f}_{{s, \alpha, p, \bm{\gamma}}}^2   b^{-2\theta T} C_{\alpha, p}^{2\abs{\frak{u}}} \sum_{\bm{\ell}_{\frak{u}} \in \mathbb{N}_0^{\abs{\frak{u}}}} b^{2(\theta - 1) p \abs{\bm{\ell}_{\frak{u}} }_1} \prod_{j \in \frak{u}} \frac{1}{\alpha !} \left( {(b-1)}\sum_{i = \ell_j + 1}^{\infty} b^{2(\theta - 1) i} \right)^{\alpha} \notag \\
    &\leq \gamma_{\frak{u}}^2 \norm{f}_{{s, \alpha, p, \bm{\gamma}}}^2   b^{-2\theta T} C_{\alpha, p, \theta}^{\abs{\frak{u}}}, \label{eq:sum_walsh_notinK}
\end{align}
with
\[ C_{\alpha, p, \theta} = C_{\alpha, p}^2 \frac{{(b-1)^{\alpha}}}{\alpha! (b^{2(1-\theta)} - 1)^{\alpha} (1 - b^{2(\theta - 1)(\alpha + p)})}.\] 

We next derive an upper bound for the size $\abs{K_{\frak{u}}(T)}$. 
\begin{Lemma}
For any $T > 0$, we have
\begin{equation}
    \abs*{K_{\frak{u}} (T)} \leq b^{\abs{\frak{u}}} b^{T/(\alpha + p)} \frac{(T+\abs{\frak{u}}\alpha -1)^{\abs{\frak{u}}\alpha -1}}{(\abs{\frak{u}}\alpha - 1)!}.
\end{equation}
\end{Lemma}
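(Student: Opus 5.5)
The plan is a direct counting argument. I parametrize each $\bm{k}_{\frak{u}}\in K_{\frak{u}}(T)$ by the positions of its leading nonzero digits and by the remaining digits. I then bound the number of admissible digit choices by a power of $b$ that the constraint turns into $b^{T/(\alpha+p)}$. The positions are finally counted as lattice points of a simplex, which produces the binomial-type factor.

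Concretely, for each $j\in\frak{u}$ write $k_j=(k_j)^{+}_{\alpha}+(k_j)^{-}_{\alpha}$, with top positions $a_{j,1}>\cdots>a_{j,n_j}$ ($n_j=n_\alpha(k_j)$) and $\ell_j={k_j}_{(\alpha+1)}$. Two facts drive the estimate.

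First, for fixed positions $(a_{j,i})$ and fixed $\ell_j$, the lower part $(k_j)^{-}_{\alpha}$ satisfies $(k_j)^{-}_{\alpha}<b^{\ell_j}$, so there are at most $b^{\ell_j}$ choices. The top digits contribute at most $(b-1)^{n_j}$ further choices.

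Second, since $a_{j,i}\ge \ell_j+1$ for all $i\le \alpha$, we have $\mu_\alpha(k_j)\ge \alpha\ell_j$ when $\ell_j>0$. Hence $(\alpha+p)\ell_j\le p\ell_j+\mu_\alpha(k_j)$. Summing over $j$ and using the defining constraint of $K_{\frak{u}}(T)$ gives
\[
b^{|\bm{\ell}_{\frak{u}}|_1}\le b^{T/(\alpha+p)}.
\]
So the lower digits cost at most $b^{T/(\alpha+p)}$ uniformly.

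It remains to count the positions. I would encode the data $(a_{j,1},\ldots,a_{j,\alpha})$, padded as in the definition of $\mu_\alpha$, as a vector of $|\frak{u}|\alpha$ nonnegative integers whose sum is $\sum_j\mu_\alpha(k_j)\le T$. The number of such vectors is at most the number of lattice points in $\{\bm{x}\in\natu_0^{|\frak{u}|\alpha}: |\bm{x}|_1\le T\}$. With a suitable shift (using $a_{j,1}\ge 1$ for $j\in\frak{u}$), this is bounded by $(T+|\frak{u}|\alpha-1)^{|\frak{u}|\alpha-1}/(|\frak{u}|\alpha-1)!$ via the standard estimate $\binom{n+d-1}{d-1}\le (n+d-1)^{d-1}/(d-1)!$. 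Once the $a$'s are fixed, $\ell_j$ is not free: either $\ell_j=0$, or it ranges over values below $a_{j,\alpha}$. I would handle this by summing the geometric weight $b^{\ell_j}$ over $\ell_j< a_{j,\alpha}$ directly, rather than counting $\ell_j$ as an extra free coordinate. This yields $\sum_{\ell<a}b^{\ell}\le b^{a}/(b-1)$ and keeps the dimension of the simplex at $|\frak{u}|\alpha$.

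The main obstacle is the bookkeeping of constants, namely getting exactly $b^{|\frak{u}|}$ rather than $(b-1)^{\alpha|\frak{u}|}b^{|\frak{u}|}$. The nonzero top digits and the geometric sum over $\ell_j$ both produce per-coordinate factors.

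What I would try first is to not separate the top digits from the lower part. For fixed positions $a_{j,1}>\cdots>a_{j,\alpha}$, all $k_j$ sharing them lie in a set of integers below $b^{a_{j,1}}$. I would count $k_j$ whose $\alpha$-th largest digit position is at most $a_{j,\alpha}$ together with the lower digits, as integers in a block of length $b^{a_{j,\alpha}}$. This gives at most $b\cdot b^{\ell_j}$-type counts, the factor $b$ per coordinate being the $b^{|\frak{u}|}$ in the statement.

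If the top nonzero digits still leave a stray factor $(b-1)^{\alpha}$ per coordinate, I would absorb it by slightly enlarging the simplex. The argument would be that $(b-1)$ choices of a digit at position $a$ correspond to integers in $[b^{a-1},b^{a})$. This lets the digit choices be traded against a shift of the position count, which the $-1$ inside $(T+|\frak{u}|\alpha-1)$ leaves some room for.
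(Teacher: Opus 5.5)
\paragraph{The target inequality is false.}
The difficulty you flag as the ``main obstacle'' is not bookkeeping: the inequality you are trying to prove does not hold, so those steps cannot be completed. Take $b=2$, $\alpha=1$, $p=1$, $|\mathfrak{u}|=1$, $T=9$, so that $K_{\mathfrak{u}}(9)=\{k\ge 1: k_{(1)}+k_{(2)}\le 9\}$. Group by the leading position $a=k_{(1)}$. For $a\le 5$ all $2^{a-1}$ integers qualify, since $a+k_{(2)}\le 2a-1\le 9$. For $6\le a\le 9$ exactly $2^{9-a}$ qualify. Hence $|K_{\mathfrak{u}}(9)|=31+15=46$, whereas the right-hand side is $2\cdot 2^{9/2}\cdot 9^{0}/0!=2^{11/2}\approx 45.25$. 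More generally, in this setting $|K_{\mathfrak{u}}(2M+1)|=3\cdot 2^{M}-2>2^{M+3/2}$ for all $M\ge 4$.

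\paragraph{Where your plan breaks.}
The failure is already visible at your simplex step. For $d=|\mathfrak{u}|\alpha$, the number of $\bm{x}\in\mathbb{N}_0^{d}$ with $|\bm{x}|_1\le T$ is $\binom{\lfloor T\rfloor+d}{d}$, which has degree $d$ in $T$. The estimate $\binom{n+d-1}{d-1}$ counts only the slice $|\bm{x}|_1=n$. Using $a_{j,1}\ge 1$ lowers the budget by $|\mathfrak{u}|$ but removes no dimension: for $d=1$ the leading position alone takes $\lfloor T\rfloor$ values, while your bound gives $T^{0}/0!=1$. The factor $(b-1)^{\alpha}$ per coordinate is also genuinely present. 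For fixed positions $a_{j,1}>\cdots>a_{j,\alpha}>\ell_j\ge 1$ there are exactly $(b-1)^{\alpha+1}b^{\ell_j-1}$ integers $k_j$, and the proposed ``trade'' against the position count is not an argument. Finally, summing $b^{\ell_j}$ over all $\ell_j<a_{j,\alpha}$ discards the budget $p|\bm{\ell}|_1\le T-\sum_j\mu_{\alpha}(k_j)$. It yields $b^{a_{j,\alpha}}$, which can be of size $b^{T/\alpha}\gg b^{T/(\alpha+p)}$. Your ``block of length $b^{a_{j,\alpha}}$'' count has the same defect.

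\paragraph{What the paper proves, and a repair.}
The paper's proof actually ends with the weaker bound
$|K_{\mathfrak{u}}(T)|\le (b-1)^{|\mathfrak{u}|(\alpha+1)}b^{T/(\alpha+p)}\binom{\lfloor T\rfloor+|\mathfrak{u}|\alpha}{|\mathfrak{u}|\alpha}\le (b-1)^{|\mathfrak{u}|(\alpha+1)}b^{T/(\alpha+p)}(T+|\mathfrak{u}|\alpha)^{|\mathfrak{u}|\alpha}/(|\mathfrak{u}|\alpha)!$.
This is the form that enters $\Gamma_{m,\xi}$ and Lemma~\ref{lem:union_bound}. Its route begins exactly like yours: free lower digits, and $(\alpha+p)\ell_j\le p\ell_j+\mu_\alpha(k_j)$ to get $b^{|\bm{\ell}|_1}\le b^{T/(\alpha+p)}$. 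It then relaxes to non-strict chains at the price of $(b-1)^{|\mathfrak{u}|(\alpha+1)}$ and counts the full $|\mathfrak{u}|\alpha$-dimensional simplex. So you should aim for a bound of the type $\mathrm{poly}(T)\,b^{T/(\alpha+p)}$, which is all the union-bound argument needs.

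The paper also passes from chains of length $\alpha+1$ to chains of length $\alpha$ without counting the choices of $k_{j,(\alpha+1)}$. Your geometric-sum idea, done \emph{under the budget}, repairs this. Set $R=\lfloor T/(\alpha+p)\rfloor$; every element of $K_{\mathfrak{u}}(T)$ then has $|\bm{\ell}|_1\le R$. The lower parts $((k_j)^{-}_{\alpha})_{j\in\mathfrak{u}}$ with $|\bm{\ell}|_1\le R$ number at most $\binom{R+|\mathfrak{u}|-1}{|\mathfrak{u}|-1}b^{R}$. To see this, sum out one coordinate using $1+\sum_{\ell=1}^{L}(b-1)b^{\ell-1}=b^{L}$. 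The leading parts number at most $(b-1)^{|\mathfrak{u}|\alpha}\binom{\lfloor T\rfloor+|\mathfrak{u}|\alpha}{|\mathfrak{u}|\alpha}$. Since $k_j$ is determined by its two parts, this gives rigorously
$|K_{\mathfrak{u}}(T)|\le (b-1)^{|\mathfrak{u}|\alpha}\binom{\lfloor T\rfloor+|\mathfrak{u}|\alpha}{|\mathfrak{u}|\alpha}\binom{R+|\mathfrak{u}|-1}{|\mathfrak{u}|-1}b^{T/(\alpha+p)}$.
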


\begin{proof}[Proof]
Recall that we have
\[
\abs{K_{\frak{u}}(T)}:=\#\Bigl\{\bm{k}_{\frak{u}} \in\mathbb N^{\abs{\frak{u}}}:\ 
\sum_{j \in \frak{u}} \left( \sum_{r=1}^\alpha k_{j, (r)} + p k_{j, (\alpha + 1)} \right) \le T\Bigr\}.
\]
In each coordinate $j\in \frak{u}$, we denote by $\nu_j$ the number of non-zero digits of {$k_j$}. Note that we write $k_{j, (\nu_j+1)}=k_{j, (\nu_j+2)}=\cdots=0$. When top-$(\alpha{+1})$ positions in coordinate $j$ are fixed, all lower bits below $k_{j, (\alpha+1)}$ are free to choose. Thus we have  
\begin{equation}
    \abs{K_{\frak{u}}(T)} = \sum_{\substack{k_{j, (1)} > \cdots > k_{j, (\nu_j)} > 0,\, j \in \frak{u} \\ \sum_{j \in \frak{u}} (\sum_{r=1}^\alpha k_{j, (r)} + p k_{j, (\alpha + 1) } )\le T } } {(b-1)^{\sum_{j\in \frak{u}}\min(\nu_j,\alpha+1)}}b^{\sum_{j \in \frak{u}} k_{j, (\alpha+1)}}.
\end{equation}
Notice that the above also holds when $k_{j, (\alpha+1)} = 0$. Now we relax the strict decreasing sequence $k_{j, (1)} > \cdots > k_{j, (\nu_j)} > 0,\, j \in \frak{u}$ to obtain
\begin{equation}
\abs{K_{\frak{u}}(T)} \leq 
\sum_{\substack{k_{j, (1)} \ge\cdots\ge k_{j, (\alpha+1)} \ge 0,\, j \in \frak{u} \\ \sum_{j \in \frak{u}} (\sum_{r=1}^\alpha k_{j, (r)} + p k_{j, (\alpha + 1) } )\le T } }
{(b-1)^{|\frak{u}|(\alpha+1)}}b^{\sum_{j \in \frak{u}} k_{j, (\alpha+1)}}.
\end{equation}

Let $M_j:=\sum_{r=1}^{\alpha} k_{j, (r)} + p k_{j, (\alpha+1)}$. Since $\sum_{r=1}^\alpha k_{j, (r)} \ge \alpha k_{j, (\alpha + 1)} $, we have $0\leq k_{j, (\alpha+1)} \leq \lfloor M_j / (\alpha + p) \rfloor$, leading to
{
\begin{align*}
    \abs{K_{\frak{u}}(T)} & \le (b-1)^{|\frak{u}|(\alpha+1)}\sum_{\substack{k_{j, (1)} \ge\cdots\ge k_{j, (\alpha+1)} \ge 0,\, j \in \frak{u} \\ \sum_{j \in \frak{u}} M_j\le T } }b^{\sum_{j \in \frak{u}} \lfloor M_j / (\alpha + p) \rfloor}  \\
    & \le (b-1)^{|\frak{u}|(\alpha+1)}b^{T / (\alpha + p) }\sum_{\substack{k_{j, (1)} \ge\cdots\ge k_{j, (\alpha)} \ge 0,\, j \in \frak{u} \\ \sum_{j \in \frak{u}} \sum_{r=1}^{\alpha} k_{j, (r)}\le T } }1\\
    & \le (b-1)^{|\frak{u}|(\alpha+1)}b^{T / (\alpha + p) }\sum_{\substack{a_{j,r}\ge 0,\, j \in \frak{u}, r=1,\ldots,\alpha \\ \sum_{j \in \frak{u}} \sum_{r=1}^{\alpha} a_{j,r}\le \lfloor T\rfloor} }1\\
    & = (b-1)^{|\frak{u}|(\alpha+1)}b^{T / (\alpha + p) }\binom{\lfloor T\rfloor+|\frak{u}|\alpha}{|\frak{u}|\alpha} \le (b-1)^{|\frak{u}|(\alpha+1)} b^{T / (\alpha + p) }\frac{(T+|\frak{u}|\alpha)^{|\frak{u}|\alpha}}{(|\frak{u}|\alpha)!},
\end{align*}
which proves the lemma.
}
\end{proof}
Following~\cite{Pan26}, for $0 < \delta < 1$, $\xi > 0$, we let
\begin{equation}
    \label{eq:def_t_u_m}
    T_{\frak{u}, m, \delta, \xi} = (\alpha + p) m - (\alpha + p) \left(\log_b \Gamma_{m, \xi} - \log_b (\delta) - \frac{\log_b \gamma_{\frak{u}}}{\alpha + p + \xi}\right),
\end{equation}
where
\[
    \Gamma_{m, \xi} = 1 + \sum_{\substack{\frak{u} \subseteq 1{:}s \\ \frak{u} \neq \varnothing}} \gamma_{\frak{u}}^{\frac{1}{\alpha + p + \xi}} \frac{(b-1)^{\abs{\frak{u}} (\alpha + 1)} ((\alpha + p)m + \abs{\frak{u}} \alpha)^{\abs{\frak{u}} \alpha} }{(\abs{\frak{u}} \alpha )!}.
\]
In the following, we bound the probability of a ``bad event'', i.e., when there exists a $\bm{k} \in \bigcup_{\substack{\frak{u} \subseteq 1{:}s \\ \frak{u} \neq \varnothing}} K_{\frak{u}} (T_{\frak{u}, m, \delta, \xi})$ that is in the dual net $\mathcal{D}^{\perp}$. 
\begin{Lemma}\label{lem:union_bound}
    Given $0 < \delta < 1$, we have
    \[
        \Pr\left( \bigcup_{\bm{k} \in \bigcup_{\substack{\frak{u} \subseteq 1{:}s \\ \frak{u} \neq \varnothing}} K_{\frak{u}} (T_{\frak{u}, m, \delta, \xi}) }  \left\{ \bm{k} \in \mathcal{D}^{\perp} \right\} \right) \leq \delta. 
    \]
\end{Lemma}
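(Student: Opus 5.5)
The plan is a union bound over all frequencies in $\bigcup_{\frak{u}} K_{\frak{u}}(T_{\frak{u},m,\delta,\xi})$, followed by the cardinality estimate for $K_{\frak{u}}(T)$ from the preceding lemma. We regard a vector $\bm{k}_{\frak{u}}\in K_{\frak{u}}(T)$ as the frequency $(\bm{k}_{\frak{u}},\bm{0})\in\mathbb{N}_*^s$. Lemma~\ref{lem:prob_each_k} (infinite precision, HRD) gives $\Pr((\bm{k}_{\frak{u}},\bm{0})\in\mathcal{D}^\perp)=b^{-m}$ for every such frequency. Hence the probability in the statement is at most
\[
b^{-m}\sum_{\emptyset\neq\frak{u}\subseteq 1{:}s}\abs{K_{\frak{u}}(T_{\frak{u},m,\delta,\xi})}.
\]
If $T_{\frak{u},m,\delta,\xi}\le 0$, then $K_{\frak{u}}$ is empty, since each nonzero $k_j$ contributes $\mu_\alpha(k_j)\ge 1$. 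We may therefore restrict the sum to those $\frak{u}$ with $T_{\frak{u}}:=T_{\frak{u},m,\delta,\xi}>0$.

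For each such $\frak{u}$ we apply the cardinality bound in the form actually derived at the end of its proof:
\[
\abs{K_{\frak{u}}(T)}\le (b-1)^{|\frak{u}|(\alpha+1)}\, b^{T/(\alpha+p)}\,\frac{(T+|\frak{u}|\alpha)^{|\frak{u}|\alpha}}{(|\frak{u}|\alpha)!}.
\]
This matches the summands of $\Gamma_{m,\xi}$ exactly. We then substitute $T=T_{\frak{u}}$. By \eqref{eq:def_t_u_m},
\[
b^{T_{\frak{u}}/(\alpha+p)} = b^{m}\,\Gamma_{m,\xi}^{-1}\,\delta\,\gamma_{\frak{u}}^{1/(\alpha+p+\xi)}.
\]
The factor $b^m$ cancels the $b^{-m}$ from the union bound.

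The step I expect to need the most care is the polynomial factor. Monotonicity of $x\mapsto (x+|\frak{u}|\alpha)^{|\frak{u}|\alpha}$ gives $(T_{\frak{u}}+|\frak{u}|\alpha)^{|\frak{u}|\alpha}\le((\alpha+p)m+|\frak{u}|\alpha)^{|\frak{u}|\alpha}$, provided $T_{\frak{u}}\le(\alpha+p)m$. That inequality is equivalent to
\[
\log_b\Gamma_{m,\xi}-\log_b\delta-\frac{\log_b\gamma_{\frak{u}}}{\alpha+p+\xi}\ge 0,
\]
that is, $\Gamma_{m,\xi}\ge\delta\,\gamma_{\frak{u}}^{1/(\alpha+p+\xi)}$. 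This holds because $\delta<1$ and $\Gamma_{m,\xi}$ contains the term $\gamma_{\frak{u}}^{1/(\alpha+p+\xi)}$ times a coefficient that is at least $1$. To see the coefficient bound: for $b\ge 3$, $(b-1)^{|\frak{u}|(\alpha+1)}\ge 1$, and $(x+n)^n/n!\ge n^n/n!\ge1$ with $n=|\frak{u}|\alpha$. For $b=2$ the same conclusion holds since $(b-1)^{\cdot}=1$. If this monotonicity argument turned out to be delicate, the fallback is to absorb the discrepancy into $\Gamma_{m,\xi}$; with the stated definition it should go through directly.

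Combining these pieces gives
\[
\Pr(\cdots)\le \frac{\delta}{\Gamma_{m,\xi}}\sum_{\frak{u}\neq\emptyset}\gamma_{\frak{u}}^{\frac{1}{\alpha+p+\xi}}\frac{(b-1)^{|\frak{u}|(\alpha+1)}((\alpha+p)m+|\frak{u}|\alpha)^{|\frak{u}|\alpha}}{(|\frak{u}|\alpha)!}=\delta\,\frac{\Gamma_{m,\xi}-1}{\Gamma_{m,\xi}}\le\delta,
\]
which is the claimed bound.
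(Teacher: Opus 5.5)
Your proposal is correct and takes the same route as the paper. Both arguments apply a union bound with Lemma~\ref{lem:prob_each_k}, use the cardinality estimate for $K_{\frak{u}}(T)$ in the form actually derived in its proof, substitute $b^{T_{\frak{u},m,\delta,\xi}/(\alpha+p)}=\delta b^m\gamma_{\frak{u}}^{1/(\alpha+p+\xi)}/\Gamma_{m,\xi}$, and close with the monotonicity step based on $T_{\frak{u},m,\delta,\xi}\le(\alpha+p)m$. Your explicit justification of that inequality via $\Gamma_{m,\xi}\ge\delta\gamma_{\frak{u}}^{1/(\alpha+p+\xi)}$, and your treatment of the empty case $T_{\frak{u},m,\delta,\xi}\le 0$, supply details the paper only asserts or leaves implicit.
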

\begin{proof}
    Notice that we have $T_{\frak{u}, m, \delta, \xi} \leq (\alpha + p) m$ for any $\frak{u} \subseteq 1{:}s$ and $m > 0$. Thus, we have
\begin{align*}
    \sum_{\substack{\frak{u} \subseteq 1{:}s \\ \frak{u} \neq \varnothing} }\abs*{K_{\frak{u}} (T_{\frak{u}, m, \delta, \xi})} &\leq \delta b^m \sum_{\substack{\frak{u} \subseteq 1{:}s \\ \frak{u} \neq \varnothing} } \frac{\gamma_{\frak{u}}^{\frac{1}{\alpha + p + \xi}} }{\Gamma_{m, \xi}} (b-1)^{\abs{\frak{u}} (\alpha + 1)} \frac{(T_{\frak{u}, m, \delta, \xi} + \abs{\frak{u}}\alpha )^{\abs{\frak{u}}\alpha }}{(\abs{\frak{u}}\alpha )!} \\
    &\leq \delta b^m \frac{1}{{\Gamma_{m, \xi}}} \sum_{\substack{\frak{u} \subseteq 1{:}s \\ \frak{u} \neq \varnothing} } {\gamma_{\frak{u}}^{\frac{1}{\alpha + p + \xi}} } (b-1)^{\abs{\frak{u}} (\alpha + 1)} \frac{((\alpha + p)m +\abs{\frak{u}}\alpha )^{\abs{\frak{u}}\alpha }}{(\abs{\frak{u}}\alpha )!} \\
    &\leq \delta b^m. \qedhere
\end{align*}
\end{proof}

Now we present the mean squared error of the median-of-means estimator. 
\begin{Theorem}\label{thm:median-of-means}
    Given $0 < \delta < 1/8$, when $f \in \mathcal{W}_{s, \alpha, p, \bm{\gamma}}$ with $\alpha \in \mathbb{N}$, $p\in (0,1]$, the weighted Sobolev--variation space in Definition~\ref{def:weighted_sobolev_variation}, the median-of-means estimator $Q_N^{(r)}(f)$ satisfies
\begin{equation}
    \label{eq:mse_median_estimator}
    \EE\left[\left(Q_N^{(r)}(f) - I\right)^2\right] \leq C_{\epsilon, \theta, s} b^{(\epsilon - 2\theta (\alpha + p) - 1 ) m}  \norm{f}_{{s, \alpha, p, \bm{\gamma}}}^2 + 2(8\delta)^r \norm{f}^2_{L^{\infty}([0, 1)^s)},
\end{equation}
where $\epsilon > 0$, $0 < \theta < 1$, and the constant $C_{\epsilon, \theta, s} \to \infty$ when $\epsilon \to 0$ or $\theta \to 1$. 
\end{Theorem}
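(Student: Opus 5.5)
We follow Pan's median-of-means argument: split each replicate's error on a good event and a bad event, control the good-event error in mean square via the tail sum \eqref{eq:sum_walsh_notinK}, and control the bad event by Lemma~\ref{lem:union_bound} together with a binomial tail for the median.

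\textbf{Good and bad events for one replicate.} For a single replicate $Q_{N,i}$, define the bad event
\[
G_i=\bigcup_{\varnothing\ne\mathfrak{u}}\bigl\{\exists\,\bm{k}_{\mathfrak{u}}\in K_{\mathfrak{u}}(T_{\mathfrak{u},m,\delta,\xi}) \text{ with } (\bm{k}_{\mathfrak{u}},\bm{0})\in\mathcal{D}^\perp\bigr\}.
\]
By Lemma~\ref{lem:union_bound}, $\Pr(G_i)\le\delta$. On $G_i^c$, the error \eqref{eq:error-digital-net} only involves frequencies outside $\bigcup_{\mathfrak{u}} K_{\mathfrak{u}}$. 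Write $E_i=(Q_{N,i}-I)\mathbbm{1}_{G_i^c}$. The event $G_i$ depends only on the generating matrices, while the shift is independent and uniform. Hence, exactly as in the variance computation after Lemma~\ref{lem:prob_each_k}, the cross terms vanish and
\[
\EE[E_i^2]\le\sum_{\mathfrak{u}}\sum_{\bm{k}_{\mathfrak{u}}\notin K_{\mathfrak{u}}}\Pr\bigl((\bm{k}_{\mathfrak{u}},\bm 0)\in\mathcal{D}^\perp\bigr)\,|\hat f(\bm{k}_{\mathfrak{u}},\bm 0)|^2 .
\]
Here we drop the indicator $\mathbbm{1}_{G_i^c}$ after conditioning on the matrices; this is legitimate because the shift average is taken conditionally on the matrices. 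Lemma~\ref{lem:prob_each_k} gives the factor $b^{-m}$, and \eqref{eq:sum_walsh_notinK} gives
\[
\EE[E_i^2]\le b^{-m}\|f\|^2\sum_{\mathfrak{u}}\gamma_{\mathfrak{u}}^2C_{\alpha,p,\theta}^{|\mathfrak{u}|}\,b^{-2\theta T_{\mathfrak{u},m,\delta,\xi}} .
\]

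\textbf{Evaluating the exponent.} Substituting \eqref{eq:def_t_u_m},
\[
b^{-2\theta T_{\mathfrak{u},m,\delta,\xi}}=b^{-2\theta(\alpha+p)m}\bigl(\Gamma_{m,\xi}/\delta\bigr)^{2\theta(\alpha+p)}\gamma_{\mathfrak{u}}^{-2\theta(\alpha+p)/(\alpha+p+\xi)}.
\]
The factor $\Gamma_{m,\xi}$ grows only polynomially in $m$, with degree at most $s\alpha$. It is therefore absorbed into $b^{\epsilon m}$ at the cost of an $s$-dependent constant that blows up as $\epsilon\to0$. The weight factor becomes $\gamma_{\mathfrak{u}}^{2-2\theta(\alpha+p)/(\alpha+p+\xi)}$, summed against $C_{\alpha,p,\theta}^{|\mathfrak{u}|}$; this is a finite constant for fixed $s$. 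The factor $\delta^{-2\theta(\alpha+p)}$ is also absorbed, and $C_{\alpha,p,\theta}\to\infty$ as $\theta\to1$. Together this gives $\EE[E_i^2]\le C b^{(\epsilon-2\theta(\alpha+p)-1)m}\|f\|^2$.

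\textbf{Median step.} Let $Y_i=Q_{N,i}-I$. Set $\eta^2=c\,\EE[E_i^2]$ for a suitable constant $c$, to be chosen below. Call replicate $i$ good if $G_i^c$ holds and $|E_i|\le\eta$. By Markov's inequality and the bound $\Pr(G_i)\le\delta$, each replicate is bad with probability at most $q:=\delta+1/c$. If at most $r-1$ of the $2r-1$ replicates are bad, then at least $r$ of them satisfy $|Y_i|\le\eta$, and so $|Q_N^{(r)}-I|\le\eta$. Otherwise we use the trivial bound $|Q_N^{(r)}-I|\le2\|f\|_\infty$. The binomial tail gives
\[
\Pr(\text{at least } r \text{ bad})\le\binom{2r-1}{r}q^r\le(4q)^r .
\]
Choosing $c=1/\delta$ makes $q=2\delta$, so this probability is at most $(8\delta)^r$. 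Hence
\[
\EE\bigl[(Q_N^{(r)}-I)^2\bigr]\le\eta^2+4\|f\|_\infty^2(8\delta)^r .
\]
The factor $c=1/\delta$ in $\eta^2$ is harmless since it goes into $C_{\epsilon,\theta,s}$. The stated constant $2$ (rather than $4$) needs a little more care: for example, on the bad event with $f$ real one can use $|Q_N^{(r)}-I|\le\|f\|_\infty+|I|$ together with a centering argument, or simply accept the constant $4$. Getting the constants to match the statement, rather than proving the rate, is the expected fiddly point.

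The main obstacle is the good-event bound: one must justify removing the indicator $\mathbbm{1}_{G_i^c}$ while still using orthogonality from the shift. Doing this with the random shift conditionally on the matrices resolves it.
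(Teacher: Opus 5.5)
Your proposal is correct and follows the paper's proof essentially step for step. Your threshold $\eta^2=\delta^{-1}\mathbb{E}[(Q_{N,i}-I)^2\mathbbm{1}_{G_i^c}]$ is exactly the paper's $\delta^{-1}\Pr(\mathcal{A}^c)\,\mathrm{Var}[Q_N(f)\mid\mathcal{A}^c]$, and your Markov step is its Chebyshev step giving per-replicate failure probability $2\delta$; the good-event bound via Lemma~\ref{lem:prob_each_k} and \eqref{eq:sum_walsh_notinK}, the absorption of $\Gamma_{m,\xi}^{2\theta(\alpha+p)}$ into $b^{\epsilon m}$, and the tail bound $\binom{2r-1}{r}(2\delta)^r\le(8\delta)^r$ all match. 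On the constant: the paper also only uses $|Q_N(f)-I|\le 2\|f\|_{L^\infty}$, which gives $4(8\delta)^r\|f\|_{L^\infty}^2$, so its factor $2$ is a slip (as is a dropped $\delta^{-1}$ in its final display, which you correctly absorb into $C_{\epsilon,\theta,s}$); accepting $4$ is the right call, and the centering idea is unnecessary.
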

\begin{proof}
Let $Q_N(f)$ be the QMC rule based on a single HRD with a random digital shift. Let us define the set of vectors $\mathcal{K}$ as
\[ \mathcal{K}=\bigcup_{\emptyset \ne \frak{u}\subset 1{:}s}\bigcup_{\bsk_{\frak{u}}\in K_{\frak{u}}(T_{\frak{u},m,\delta,\xi})}\left\{ (\bsk_{\frak{u}},\bszero)\right\} \]
and consider the event $\mathcal{A}$ that the dual net contains at least one vector from $\mathcal{K}$, which is given by $\mathcal{A}=\left\{ \mathcal{K}\cap \mathcal{D}^{\perp}\ne \emptyset\right\}$. 
Noting that the event $\mathcal{A}$ happens independently of a random digital shift, we have $\E{Q_N(f)\mid \mathcal{A}^c}=\mu$. Applying Chebyshev's inequality, we obtain
\begin{align*}
    & \Pr\left( \abs{Q_N(f) - I}^2 \ge \delta^{-1}\Pr(\mathcal{A}^c) \var{Q_N(f) \mid \mathcal{A}^c } \right) \\
    & = \Pr\left( \mathcal{A} \right)\Pr\left( \abs{Q_N(f) - I}^2 \ge \delta^{-1}\Pr(\mathcal{A}^c) \var{Q_N(f) \mid \mathcal{A}^c } \mid \mathcal{A} \right) \\
    & \quad + \Pr\left( \mathcal{A}^c \right)\Pr\left( \abs{Q_N(f) - I}^2 \ge \delta^{-1}\Pr(\mathcal{A}^c) \var{Q_N(f) \mid \mathcal{A}^c } \mid \mathcal{A}^c \right) \\
    & \le \Pr\left( \mathcal{A} \right)+\Pr\left( \mathcal{A}^c \right)\cdot \frac{1}{\delta^{-1}\Pr(\mathcal{A}^c)}\le 2\delta,
\end{align*}
where the last bound follows from Lemma~\ref{lem:union_bound}.
Notice that
\begin{align*}
    \delta^{-1}\Pr(\mathcal{A}^c) \var{Q_N(f) \mid \mathcal{A}^c } & = \delta^{-1}\Pr(\mathcal{A}^c) \sum_{\bsk\in \natu_*^s}\abs{\hat{f}(\bsk)}^2\Pr\left( \bsk\in \mathcal{D}^{\perp} \mid \mathcal{A}^c\right) \\
    & \le \delta^{-1}\sum_{\bsk\in \natu_*^s\setminus \mathcal{K}}\abs{\hat{f}(\bsk)}^2\Pr\left( \bsk\in \mathcal{D}^{\perp} \right) \\
    & = \delta^{-1}b^{-m}\sum_{\bsk\in \natu_*^s\setminus \mathcal{K}}\abs{\hat{f}(\bsk)}^2 \\
    & \leq \norm{f}_{{s, \alpha, p, \bm{\gamma}}}^2 \frac{\Gamma_{m, \xi}^{2\theta (\alpha + p)} }{b^{ (1+2\theta (\alpha + p)) m} } \delta^{-2\theta (\alpha + p)} \sum_{\substack{\frak{u} \subseteq 1{:}s \\ \frak{u} \neq \varnothing} }  \gamma_{\frak{u}}^{\frac{2\xi + (2-2\theta)(\alpha + p)}{\alpha + p 
     + \xi}} C_{\alpha, p, \theta, \frak{u}} ,
\end{align*}
where the second equality follows from Lemma~\ref{lem:prob_each_k}, and the last bound follows from \eqref{eq:sum_walsh_notinK} and the definition of $T_{\frak{u}, m, \delta, \xi}$.

For $Q_N^{(r)}(f)$ being the median of $(2r - 1)$ i.i.d.\ estimators, we have
\begin{equation}
    \Pr\left( \abs{Q_N^{(r)}(f) - I}^2 \ge \delta^{-1}\Pr(\mathcal{A}^c) \var{Q_N(f) \mid \mathcal{A}^c } \right) \le {2r -1 \choose r} (2\delta)^r \le (8\delta)^r.
\end{equation}
Notice that when $f \in \mathcal{W}_{s, \alpha, p, \bm{\gamma}}$ for $\alpha \in \mathbb{N}$, $p\in (0,1]$, we see that $\norm{f}_{L^{\infty}([0, 1)^s)}$, and so 
\begin{align*}
    \abs*{Q_N(f) - I} &= \abs*{\frac1N \sum_{i = 0}^{N - 1} \left(\int_{[0, 1)^s} \left(f(\bm{x}_i) - f(\bm{x})\right)\, \mathrm{d}\bm{x} \right)}\\
    &\leq \frac1N \sum_{i = 0}^{N - 1} \abs*{ \int_{[0, 1)^s} \left(f(\bm{x}_i) - f(\bm{x})\right)\, \mathrm{d}\bm{x}} \leq 2 \norm{f}_{L^{\infty}([0, 1)^s)},
\end{align*}
with probability $1$. Thus, the mean squared error is bounded as
\begin{align*}
    \EE[({Q}_N^{(r)}(f) - I)^2]&= \EE[({Q}_N^{(r)}(f) - I)^2 \chi \left( \abs{Q_N(f) - I}^2 < \delta^{-1}\Pr(\mathcal{A}^c) \var{Q_N(f) \mid \mathcal{A}^c } \right) ]\\
    & \quad + \EE[({Q}_N^{(r)}(f) - I)^2 \chi \left( \abs{Q_N(f) - I}^2 \ge \delta^{-1}\Pr(\mathcal{A}^c) \var{Q_N(f) \mid \mathcal{A}^c } \right) ] \\
    &\leq \norm{f}_{{s, \alpha, p, \bm{\gamma}}}^2 \frac{\Gamma_{m, \xi}^{2\theta (\alpha + p)} }{b^{ (1+2\theta (\alpha + p)) m} } \delta^{-2\theta (\alpha + p)} \sum_{\substack{\frak{u} \subseteq 1{:}s \\ \frak{u} \neq \varnothing} }  \gamma_{\frak{u}}^{\frac{2\xi + (2-2\theta)(\alpha + p)}{\alpha + p 
     + \xi}} C_{\alpha, p, \theta, \frak{u}}\\
     & \quad + 2 (8\delta)^r \norm{f}^2_{L^{\infty}([0, 1)^s)}. 
\end{align*}
Notice that $\Gamma_{m, \xi}^{2\theta (\alpha + p)} = o(b^{\epsilon m})$, for any $\epsilon > 0$, enabling us to find a constant $\tilde{C}_{\epsilon, s} > 0$, independent of $m$, such that $\Gamma_{m, \xi}^{2\theta (\alpha + p)} \leq \tilde{C}_{\epsilon, s} b^{\epsilon m}$ for any $m > 0$. This leads to
\begin{align*}
    \EE[({Q}_N^{(r)} (f) - I)^2] & \leq \norm{f}_{{s, \alpha, p, \bm{\gamma}}}^2\frac{\tilde{C}_{\epsilon, s}}{b^{(2\theta (\alpha + p) + 1 - \epsilon ) m}} \delta^{-2\theta (\alpha + p)} \sum_{\substack{\frak{u} \subseteq 1{:}s \\ \frak{u} \neq \varnothing} }  \gamma_{\frak{u}}^{\frac{2\xi + (2- 2\theta)(\alpha + p)}{\alpha + p + \xi}} C_{\alpha, p, \theta, \frak{u}} \\
    & \quad + 2(8\delta)^r \norm{f}^2_{L^{\infty}([0, 1)^s)}.
\end{align*}
By substituting 
\[C_{\epsilon, \theta, s} \coloneqq \tilde{C}_{\epsilon, s} \delta^{-2\theta (\alpha + p)} \sum_{\substack{\frak{u} \subseteq 1{:}s \\ \frak{u} \neq \varnothing} }  \gamma_{\frak{u}}^{\frac{2\xi + (2- 2\theta)(\alpha + p)}{\alpha + p + \xi}} C_{\alpha, p, \theta, \frak{u}}\]
into the above equation, we reach the conclusion.
\end{proof}

{
\begin{remark}[Choice of $r$]\label{rem:choice_r}
The upper bound on the mean squared error given in \eqref{eq:mse_median_estimator} consists of two terms. To ensure that the second term does not dominate, it suffices to set
\[ (8\delta)^r = b^{-(2\theta (\alpha + p) + 1 ) m}=N^{-(2\theta (\alpha + p) + 1 )}. \]
Thus, $r$ can be chosen independently of the dimension $s$ as
\[ r = \left\lceil \left(2\theta (\alpha + p) + 1\right)\frac{\log N}{\log (8\delta)^{-1}}\right\rceil . \]
Since the median-of-means estimator requires $rN$ function evaluations, this choice of $r$ affects the convergence rate only by a dimension-independent logarithmic factor. Furthermore, in terms of the total computational cost, it preserves the dimension independence of the implied constant (as discussed in the next remark). If the smoothness parameters $\alpha$ and $p$ are unknown in advance, one can instead choose $r = \lceil g(N)\log N\rceil$ using a slowly increasing function $g$, such as $g(N)=\max(1,\log\log N)$ or $g(N)=\log N$, as suggested in \cite{GK26}.
\end{remark}

Before moving on, we remark that the factor $\norm{f}_{L^{\infty}([0, 1)^s)}$ in the second term of \eqref{eq:mse_median_estimator} can be bounded independently of the dimension $s$ under a mild condition on the weights $\bm{\gamma}$. For any $f \in \mathcal{W}_{s, \alpha, p, \bm{\gamma}}$, it follows from the fundamental theorem of calculus that the function can be decomposed into its projections:
\[  f(\bm{x}) = \sum_{\mathfrak{u} \subseteq \{1:s\}} \int_{[0, 1)^{|\frak{u}|}} \Delta_{\mathfrak{u}}(f_u; \bm{y}_{\mathfrak{u}}, \bm{x}_{\mathfrak{u}}) \, \mathrm{d}\bm{y}_{\mathfrak{u}}. \]
Here, $\Delta_{\mathfrak{u}}(f_{\mathfrak{u}}; \bm{y}_{\mathfrak{u}}, \bm{x}_{\mathfrak{u}})$ denotes the mixed difference of the projection $f_{\mathfrak{u}}$ with respect to the variables in $\mathfrak{u}$, defined by
\begin{align*}
    \Delta_{\mathfrak{u}}(f_{\mathfrak{u}}; \bm{y}_{\mathfrak{u}}, \bm{x}_{\mathfrak{u}})  = \sum_{\frak{v}\subseteq \frak{u}}(-1)^{\abs{\frak{u}\setminus \frak{v}}}f_u(\bm{x}_{\frak{v}},\bm{y}_{\frak{u}\setminus \frak{v}}) = \int_{y_{j_1}}^{x_{j_1}}\cdots \int_{y_{j_k}}^{x_{j_k}}f_{\frak{u}}^{(\bm{1}_{\frak{u}})}(\bm{t}_{\frak{u}})\, \mathrm{d}\bm{t}_{\frak{u}},
\end{align*} 
where $\frak{u}=\{j_1,\ldots,j_k\}$. In fact, the case $s=1$ is easily confirmed as
\[ f(x) = \int_0^1f(y)\, \mathrm{d}y+\int_0^1(f(x)-f(y))\, \mathrm{d}y = f_{\emptyset}+\int_0^1\Delta_{\{1\}}(f;y,x)\, \mathrm{d}y. \] 
Assuming that the decomposition holds for $s=k$, for the case $s=k+1$, we apply the fundamental theorem of calculus to the variable $x_{k+1}$, and then use the induction hypothesis to obtain
\begin{align*}
    f(\bm{x}_{\{1:k+1\}}) & = \int_0^1 f(\bm{x}_{\{1:k\}}, y_{k+1}) \, \mathrm{d}y_{k+1} + \int_0^1 \Delta_{\{k+1\}}(f; y_{k+1}, x_{k+1}) \, \mathrm{d}y_{k+1} \\
    & = f_{\{1:k\}}(\bm{x}_{\{1:k\}}) + \int_0^1 \sum_{\mathfrak{u} \subseteq \{1:k\}} \int_{[0, 1)^{|\frak{u}|}} \Delta_{\mathfrak{u}}\left(\Delta_{\{k+1\}}(f; y_{k+1}, x_{k+1}); \bm{y}_{\mathfrak{u}}, \bm{x}_{\mathfrak{u}}\right) \, \mathrm{d}\bm{y}_{\mathfrak{u}} \, \mathrm{d}y_{k+1}\\
    & = \sum_{\mathfrak{u} \subseteq \{1:k\}} \int_{[0, 1)^{|\frak{u}|}} \Delta_{\mathfrak{u}}(f_u; \bm{y}_{\mathfrak{u}}, \bm{x}_{\mathfrak{u}}) \, \mathrm{d}\bm{y}_{\mathfrak{u}} + \sum_{k+1\in \mathfrak{u} \subseteq \{1:k+1\}} \int_{[0, 1)^{|\frak{u}|}} \Delta_{\mathfrak{u}}(f_u; \bm{y}_{\mathfrak{u}}, \bm{x}_{\mathfrak{u}}) \, \mathrm{d}\bm{y}_{\mathfrak{u}}\\
    & = \sum_{\mathfrak{u} \subseteq \{1:k+1\}} \int_{[0, 1)^{|\frak{u}|}} \Delta_{\mathfrak{u}}(f_u; \bm{y}_{\mathfrak{u}}, \bm{x}_{\mathfrak{u}}) \, \mathrm{d}\bm{y}_{\mathfrak{u}} .
\end{align*}

By the triangle inequality and the integral representation of the mixed difference, we have
\begin{align*}
|f(\bm{x})| & \le \sum_{\mathfrak{u} \subseteq {1:s}} \int_{[0, 1)^{|\mathfrak{u}|}} \left| \Delta_{\mathfrak{u}}(f_{\mathfrak{u}}; \bm{y}_{\mathfrak{u}}, \bm{x}_{\mathfrak{u}}) \right| \, \mathrm{d}\bm{y}_{\mathfrak{u}} \\
& \le \sum_{\mathfrak{u} \subseteq {1:s}} \int_{[0, 1)^{|\mathfrak{u}|}}|f_{\frak{u}}^{(\bm{1}_{\frak{u}})}(\bm{t}_{\frak{u}})|\, \mathrm{d}\bm{t}_{\mathfrak{u}} \le \sum_{\mathfrak{u} \subseteq {1:s}} \gamma_{\frak{u}}\gamma^{-1}_{\frak{u}}\norm{f^{(\bm{1}_{\frak{u}})}_{\frak{u}}}_{L_2} \\
& \le \left( \max_{\mathfrak{u} \subseteq {1:s}}\gamma_{\frak{u}}\right) \sum_{\mathfrak{u} \subseteq {1:s}} \gamma^{-1}_{\frak{u}}\norm{f^{(\bm{1}_{\frak{u}})}_{\frak{u}}}_{L_2} \le \left( \max_{\mathfrak{u} \subseteq {1:s}}\gamma_{\frak{u}}\right)\norm{f}_{{s, \alpha, p, \bm{\gamma}}}.
\end{align*}
Thus, $\norm{f}_{L^{\infty}([0, 1)^s)}$ is bounded independently of the dimension $s$ if the weights $\gamma_{\frak{u}}$ are uniformly bounded.
}

\begin{remark}[Dimension-independent convergence rate]
Now we consider the case when $s \to \infty$ and investigate whether the constant $C_{\epsilon, \theta, s}$ in the error bound~\eqref{eq:mse_median_estimator} is still finite. To this end, we can consider the following conditions, as $s \to \infty$,
\begin{align}
    \tilde{C}_{\epsilon, s} < \infty \quad \text{and}\quad    \sum_{\substack{\frak{u} \subseteq 1{:}s \\ \frak{u} \neq \varnothing} }  \gamma_{\frak{u}}^{\frac{2\xi + (2- 2\theta)(\alpha + p)}{\alpha + p + \xi}} C_{\alpha, p, \theta, \frak{u}}  < \infty. \label{eq:dimension_bound}
\end{align}
Suppose we have the product weight $\{ \gamma_j \leq 1 , j \in \mathbb{N} \}$ such that
\[
    \gamma_{\frak{u}} \leq \prod_{j \in \frak{u}} \gamma_j, \quad \text{and} \quad \sum_{j=1}^{\infty} \gamma_j^{\min(\frac{1}{\alpha + p + \xi},  \frac{2\xi}{\alpha + p + \xi})} < \infty,
\]
then we can show that the conditions are both satisfied. Specifically, we can find the bound $\tilde{C}_{\epsilon, s}$ in the following derivation:
\begin{align*}
    \Gamma_{m, \xi}^{2\theta(\alpha + p)} &= \left( 1 + \sum_{\substack{\frak{u} \subseteq 1{:}s \\ \frak{u} \neq \varnothing}} \gamma_{\frak{u}}^{\frac{1}{\alpha + p + \xi}} \frac{(b-1)^{\abs{\frak{u}}(\alpha + 1)} ((\alpha + p)m + \abs{\frak{u}} \alpha  )^{\abs{\frak{u}} \alpha } }{(\abs{\frak{u}} \alpha  )!} \right)^{2\theta(\alpha + p)} \\
    &\leq \left( 1 + \sum_{\substack{\frak{u} \subseteq 1{:}s \\ \frak{u} \neq \varnothing}} \gamma_{\frak{u}}^{\frac{1}{\alpha + p + \xi}} \left( \frac{(b - 1)^{\alpha + 1} ((\alpha + p)m + \abs{\frak{u}} \alpha )^{ \alpha} }{(\abs{\frak{u}} \alpha / e )^{\alpha}} \right)^{\abs{\frak{u}}} \right)^{2\theta(\alpha + p)}\\
    &\leq \left( 1 + \sum_{\substack{\frak{u} \subseteq 1{:}s \\ \frak{u} \neq \varnothing}} \gamma_{\frak{u}}^{\frac{1}{\alpha + p + \xi}} \left( {(b - 1)^{\alpha + 1} ( e(\alpha + p)m + e )^{ \alpha} } \right)^{\abs{\frak{u}}} \right)^{2\theta(\alpha + p)}\\
    &\leq \prod_{j=1}^s \left(  1 + \gamma_{j}^{\frac{1}{\alpha + p + \xi}} \left( {(b - 1)^{\alpha + 1} e^{\alpha} ( (\alpha + p)m + 1 )^{ \alpha} } \right) \right)^{2 \theta (\alpha + p)} \leq \tilde{C}_{\epsilon, s} b^{\epsilon m},
\end{align*}
which holds for any $\epsilon > 0$. Notice that we use the inequality $n! \geq (n / e)^n$ for $n \in \mathbb{N}$ in the second line and $\tilde{C}_{\epsilon, s} < \infty$ as $s \to \infty$ whenever $\sum_{j=1}^{\infty} \gamma_j^{\frac{1}{\alpha + p + \xi}} < \infty$; see Lemma 3 of~\cite{HN03} for instance. Similarly, we can show that 
\begin{align*}
    \sum_{\substack{\frak{u} \subseteq 1{:}s \\ \frak{u} \neq \varnothing} }  \gamma_{\frak{u}}^{\frac{2\xi + (2- 2\theta)(\alpha + p)}{\alpha + p + \xi}} C_{\alpha, p, \theta, \frak{u}} &\leq \sum_{\substack{\frak{u} \subseteq 1{:}s \\ \frak{u} \neq \varnothing} }  \gamma_{\frak{u}}^{\frac{2\xi}{\alpha + p + \xi}} C_{\alpha, p, \theta, \frak{u}}\\
    &\leq \prod_{j=1}^s \left( 1 + \gamma_j^{\frac{2\xi}{\alpha + p + \xi}} C_{\alpha, p, \theta} \right)\\
    &\leq \exp \left(\sum_{j=1}^s \gamma_j^{\frac{2\xi}{\alpha + p + \xi}} C_{\alpha, p, \theta} \right) < \infty,
\end{align*}
 whenever $\sum_{j=1}^{\infty} \gamma_j^{\frac{2\xi}{\alpha + p + \xi}} < \infty$. 

To conclude, notice that the sum $\sum_{j=1}^{\infty} \gamma_j^{x}$ decreases as $x > 0$ increases. We seek $\xi^{*} = \arg \max \left\{ \min (\frac{1}{\alpha + p + \xi},  \frac{2\xi}{\alpha + p + \xi}) \right\}$ to admit the largest class of weights $\{\gamma_j \leq 1, j \in \mathbb{N}\}$, which yields $\xi^{*} = 1/2$. Thus, if $\sum_{j=1}^{\infty} \gamma_j^{\frac{1}{\alpha + p + 1/2}} < \infty$, we achieve the dimension-independent convergence rate. This result is consistent with the setting in~\cite{Pan26}. 
\end{remark}

\subsection{Greedy optimization of Hankel designs}
\label{sec:optimization_hankel}

The major advantage of the median-of-means estimator lies in its universality, i.e., although we do not need to specify the smoothness ($\alpha$ and $p$ in our function space setting) and the weights $\bm{\gamma}$ in advance, the estimator can adaptively attain a high-order convergence with dimension-independent implied constants, provided that $r$ is suitably chosen (as discussed in Remark~\ref{rem:choice_r}). In some applications, such as partial differential equations with random coefficients \cite{DKLNS14}, however, the smoothness and weights may be known a priori based on the information of the underlying random fields. In such cases, optimization of QMC point sets is desirable and often performs well in practice. 

In this subsection, we consider a simple greedy optimization of Hankel designs.
Let $\alpha\in \natu$, $p\in (0,1]$, and $\bm{\gamma}$ be given.  For a digital net $P\subset [0,1)^s$ in base $b$ with $N=b^m$ points (without any digital shift applied), the worst-case error in $\mathcal{W}_{s, \alpha, p, \bm{\gamma}}$ is defined as
\[ e(\mathcal{W}_{s, \alpha, p, \bm{\gamma}},P) \coloneqq \sup_{\substack{f\in \mathcal{W}_{s, \alpha, p, \bm{\gamma}}\\ \norm{f}_{{s, \alpha, p, \bm{\gamma}}}\le 1} }|I(f)-Q_N(f)|. \]
Here, we assume that either the worst-case error or its upper bound has a computable formula. Our greedy optimization proceeds by first drawing Hankel designs independently and randomly several times, denoted by $P_1,\ldots,P_r$, and then choosing the one that minimizes the worst-case error:
\[ P^* := \arg\min_{1\le j\le r}e(\mathcal{W}_{s, \alpha, p, \bm{\gamma}},P_j). \]
One can use $P^*$ as a deterministic QMC rule, or apply a random digital shift to construct an unbiased estimator. Unbiasedness allows for straightforward error estimation through the estimator's variance, which is a property that the median-of-means estimator lacks.

Regarding this greedy optimization of Hankel designs, the worst-case error is bounded probabilistically as follows:

\begin{Theorem}
    For $\alpha\in \natu$, $p\in (0,1]$, and $\bm{\gamma}=(\gamma_{\mathfrak{u}})_{\mathfrak{u}\subseteq 1{:}s}$ be a set of positive weights, let $\mathcal{W}_{s, \alpha, p, \bm{\gamma}}$ be the weighted Sobolev--variation space as defined in Definition~\ref{def:weighted_sobolev_variation}. For $r\in \natu$, let $P_1,\ldots,P_r$ be i.i.d.\ copies of HRD with infinite precision $E=\infty$ and let $P^*$ be the one which minimizes the worst-case error in $\mathcal{W}_{s, \alpha, p, \bm{\gamma}}$. Then, for any $\delta \in (0,1)$, we have
    \[ \PP\left[ e(\mathcal{W}_{s, \alpha, p, \bm{\gamma}},P^*)\le \inf_{1/(\alpha+p)<\lambda\le 1}\left(\frac{1}{(1-\delta)b^m}\sum_{\emptyset \neq \mathfrak{u}\subseteq 1{:}s}\gamma_{\mathfrak{u}}^{\lambda} C_{\alpha,p}^{\lambda |\mathfrak{u}|}D_{\alpha,p,\lambda}^{|\mathfrak{u}|}\right)^{1/\lambda}\right]\ge 1-(1-\delta)^r, \]
    where $C_{\alpha,p}$ is as given in \eqref{eq:constant_C_alpha_p}, and $D_{\alpha,p,\lambda}>0$ is a constant depending only on $\alpha,p,\lambda$.
\end{Theorem}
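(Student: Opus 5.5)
Our strategy is the standard "averaging plus Markov" argument applied to a Jensen-type (concave power) bound on the worst-case error. First, the worst-case error of a single (unshifted) Hankel net $P$ is bounded using the error expression \eqref{eq:error-digital-net} with $\vec d_j=\bszero$:
\[ e(\mathcal{W}_{s,\alpha,p,\bm\gamma},P)\le \sup_{\norm{f}\le 1}\sum_{\bsk\in\mathcal{D}^\perp\setminus\{\bszero\}}|\hat f(\bsk)| \le \sum_{\emptyset\ne\mathfrak u\subseteq 1{:}s}\gamma_{\mathfrak u}C_{\alpha,p}^{|\mathfrak u|}\sum_{\substack{\bsk_{\mathfrak u}\in\natu^{|\mathfrak u|}\\ (\bsk_{\mathfrak u},\bszero)\in\mathcal{D}^\perp}} b^{-((1-p)\mu_\alpha(\bsk_{\mathfrak u})+p\mu_{\alpha+1}(\bsk_{\mathfrak u}))}=:B(P), \]
where the pointwise bound \eqref{eq:walsh_bound_each_function} was applied and $\bsk$ was split according to its support $\mathfrak u$. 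For $\lambda\in(1/(\alpha+p),1]$ we use Jensen's inequality in its subadditive form $(\sum a_i)^\lambda\le\sum a_i^\lambda$ to obtain
\[ B(P)^\lambda\le \sum_{\mathfrak u}\gamma_{\mathfrak u}^\lambda C_{\alpha,p}^{\lambda|\mathfrak u|}\sum_{\bsk_{\mathfrak u}}\mathbbm 1_{(\bsk_{\mathfrak u},\bszero)\in\mathcal D^\perp}\, b^{-\lambda((1-p)\mu_\alpha+p\mu_{\alpha+1})(\bsk_{\mathfrak u})}. \]

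Second, we take expectations over the HRD. Lemma~\ref{lem:prob_each_k} gives $\Pr((\bsk_{\mathfrak u},\bszero)\in\mathcal D^\perp)=b^{-m}$ for every nonzero frequency, so
\[ \EE[B(P)^\lambda]\le \frac1{b^m}\sum_{\mathfrak u}\gamma_{\mathfrak u}^\lambda C_{\alpha,p}^{\lambda|\mathfrak u|}\Bigl(\sum_{k\in\natu} b^{-\lambda((1-p)\mu_\alpha(k)+p\mu_{\alpha+1}(k))}\Bigr)^{|\mathfrak u|}, \]
because the exponent is additive over coordinates. We therefore set $D_{\alpha,p,\lambda}$ equal to the one-dimensional sum and must show that it is finite exactly when $\lambda(\alpha+p)>1$. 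This is the main technical step. We will group $k$ by its top $\alpha+1$ digit positions $a_1>\dots>a_{\alpha+1}\ge 0$ (with $a_{\alpha+1}=0$ when $k$ has at most $\alpha$ nonzero digits). The number of such $k$ is at most $(b-1)^{\alpha+1}b^{\max(a_{\alpha+1}-1,0)}$, and each contributes $b^{-\lambda(a_1+\dots+a_\alpha+pa_{\alpha+1})}$. Summing first over $a_1,\dots,a_\alpha>a_{\alpha+1}$ gives roughly $b^{-\lambda\alpha a_{\alpha+1}}$ times a constant of the form $\prod_i(b^{\lambda i}-1)^{-1}$. The remaining sum over $a_{\alpha+1}$ is then geometric with ratio $b^{1-\lambda(\alpha+p)}<1$, so it converges. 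Cases with fewer than $\alpha$ nonzero digits contribute only finite geometric sums.

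Third, Markov's inequality applied to $B(P_j)^\lambda$ gives, for each $j$,
\[ \PP\bigl[B(P_j)^\lambda\le \EE[B^\lambda]/(1-\delta)\bigr]\ge\delta. \]
By independence, the probability that every $P_j$ fails this bound is at most $(1-\delta)^r$. On the complementary event, $e(P^*)\le\min_j e(P_j)\le\min_j B(P_j)\le (\EE[B^\lambda]/(1-\delta))^{1/\lambda}$. Since $B$ does not depend on $\lambda$, we can make the event uniform in $\lambda$ by working with $\min_j B(P_j)$: choose $j$ minimising $B(P_j)$, which is $\lambda$-free. The remaining subtlety is that the Markov event depends on $\lambda$. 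We will resolve it by applying Markov to a fixed $\lambda$ and then noting that the infimum is attained or approached; if exact uniformity turns out to fail, we will state the result for each fixed $\lambda$ and take the infimum inside the probability via a limiting argument over a sequence $\lambda_n$ approaching the minimiser.
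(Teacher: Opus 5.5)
Your proposal is correct and follows the paper's proof. Both arguments bound the worst-case error by the dual-net sum of the pointwise Walsh bounds, apply $(\sum_i a_i)^\lambda\le\sum_i a_i^\lambda$ together with Lemma~\ref{lem:prob_each_k} to get $\EE[e^\lambda]\le b^{-m}\sum_{\mathfrak u}\gamma_{\mathfrak u}^\lambda C_{\alpha,p}^{\lambda|\mathfrak u|}D_{\alpha,p,\lambda}^{|\mathfrak u|}$, and finish with Markov's inequality and independence of the $r$ draws. The differences are minor: you keep the sharper exponent $(1-p)\mu_\alpha+p\mu_{\alpha+1}$, which gives a smaller $D_{\alpha,p,\lambda}$ whose finiteness you check by hand, whereas the paper cites the closed form for the weakened $\sum_k b^{-\lambda(\alpha+p)\mu_{\alpha+1}(k)/(\alpha+1)}$; and the $\lambda$-uniformity you hedge about does hold, because the events $\{B(P)\le c\}$ are nested in $c$, so continuity from above along $\lambda_n$ with thresholds decreasing to the infimum puts $\inf_\lambda$ inside the probability with no weakening.
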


\begin{proof}
For any function $f\in \mathcal{W}_{s, \alpha, p, \bm{\gamma}}$ and digital net $P\subset [0,1)^s$ in base $b$ with $N=b^m$ points (without any digital shift applied), the integration error is bounded as
\begin{align*}
    |I(f)-Q_N(f)| & = \left|\sum_{\bm{k} \in \mathcal{D}^\perp \setminus \{\bm{0} \} }\hat{f}
( \bm{k} ) \right| \le \sum_{\bm{k} \in \mathcal{D}^\perp \setminus \{\bm{0} \} }|\hat{f}
( \bm{k} ) | \\
    & \le \norm{f}_{{s, \alpha, p, \bm{\gamma}}} \sum_{\bm{k} \in \mathcal{D}^\perp \setminus \{\bm{0} \} }\gamma_{\supp(\bm{k})} C_{\alpha,p}^{|\supp(\bm{k})|} \, b^{-(\alpha+p) \mu_{\alpha+1}(\bm{k})/(\alpha+1)},
\end{align*}
where we used \eqref{eq:walsh_bound_each_function}. Therefore, the worst-case error in $\mathcal{W}_{s, \alpha, p, \bm{\gamma}}$ is bounded as
\begin{align}
    e(\mathcal{W}_{s, \alpha, p, \bm{\gamma}},P) \le \sum_{\bm{k} \in \mathcal{D}^\perp \setminus \{\bm{0} \} }\gamma_{\supp(\bm{k})} C_{\alpha,p}^{|\supp(\bm{k})|} \, b^{-(\alpha+p) \mu_{\alpha+1}(\bm{k})/(\alpha+1)}.\label{eq:worst-case_error_bound}
\end{align} 

Now, let $P$ be a randomly chosen Hankel design. By using the inequality $(\sum_i a_i)^{\lambda}\le \sum_i a_i^{\lambda}$ that holds for $(a_i)_i$ with $a_i\ge 0$ and $\lambda\in (0,1]$, and by applying Lemma~\ref{lem:prob_each_k}, we have
\begin{align*}
    \EE[e^{\lambda}(\mathcal{W}_{s, \alpha, p, \bm{\gamma}},P)] & \le \EE\left[ \sum_{\bm{k} \in \mathcal{D}^\perp \setminus \{\bm{0} \} }\gamma_{\supp(\bm{k})}^{\lambda} C_{\alpha,p}^{\lambda |\supp(\bm{k})|} \, b^{-\lambda (\alpha+p) \mu_{\alpha+1}(\bm{k})/(\alpha+1)}\right] \\
    & = \sum_{\bsk \in \natu^{s}_*}\gamma_{\supp(\bm{k})}^{\lambda} C_{\alpha,p}^{\lambda |\supp(\bm{k})|} \, b^{-\lambda (\alpha+p) \mu_{\alpha+1}(\bm{k})/(\alpha+1)}\PP\left[ \bsk\in \mathcal{D}^{\perp}\right]\\
    & = \frac{1}{b^m}\sum_{\bsk \in \natu^{s}_*}\gamma_{\supp(\bm{k})}^{\lambda} C_{\alpha,p}^{\lambda |\supp(\bm{k})|} \, b^{-\lambda (\alpha+p) \mu_{\alpha+1}(\bm{k})/(\alpha+1)}\\
    & = \frac{1}{b^m}\sum_{\emptyset \neq \mathfrak{u}\subseteq 1{:}s}\gamma_{\mathfrak{u}}^{\lambda} C_{\alpha,p}^{\lambda |\mathfrak{u}|}\sum_{\bsk_{\mathfrak{u}} \in \natu^{|\mathfrak{u}|}}  b^{-\lambda (\alpha+p) \mu_{\alpha+1}(\bsk_{\mathfrak{u}})/(\alpha+1)}\\
    & = \frac{1}{b^m}\sum_{\emptyset \neq \mathfrak{u}\subseteq 1{:}s}\gamma_{\mathfrak{u}}^{\lambda} C_{\alpha,p}^{\lambda |\mathfrak{u}|}D_{\alpha,p,\lambda}^{|\mathfrak{u}|},
\end{align*}
for $\lambda\in (1/(\alpha+p),1]$, where we write 
\begin{align*}
    D_{\alpha,p,\lambda} & = \sum_{k=1}^{\infty}b^{-\lambda (\alpha+p) \mu_{\alpha+1}(k)/(\alpha+1)}\\
    & = \sum_{\nu=1}^{\alpha}\prod_{i=1}^{\nu}\frac{b-1}{b^{\lambda i(\alpha+p)/(\alpha+1)}-1}+\frac{b^{\lambda (\alpha+p)}-1}{b^{\lambda (\alpha+p)}-b}\prod_{i=1}^{\alpha+1}\frac{b-1}{b^{\lambda i(\alpha+p)/(\alpha+1)}-1} <\infty.
\end{align*} 
We refer to \cite[Lemma~7]{God16} for the second equality.
Since the above bound on the expectation holds for any $\lambda\in (1/(\alpha+p),1]$, the Markov inequality states that
\[ \PP\left[ e(\mathcal{W}_{s, \alpha, p, \bm{\gamma}},P)\le \inf_{1/(\alpha+p)<\lambda\le 1}\left(\frac{1}{(1-\delta)b^m}\sum_{\emptyset \neq \mathfrak{u}\subseteq 1{:}s}\gamma_{\mathfrak{u}}^{\lambda} C_{\alpha,p}^{\lambda |\mathfrak{u}|}D_{\alpha,p,\lambda}^{|\mathfrak{u}|}\right)^{1/\lambda}\right] \ge \delta, \]
for any $0<\delta<1$. That is, when drawing a Hankel design randomly, the worst-case error is bounded by a quantity arbitrarily close to $\mathcal{O}(N^{-(\alpha+p)})$ with probability at least $\delta$.

Then we have
\begin{align*}
    & \PP\left[ e(\mathcal{W}_{s, \alpha, p, \bm{\gamma}},P^*)\le \inf_{1/(\alpha+p)<\lambda\le 1}\left(\frac{1}{(1-\delta)b^m}\sum_{\emptyset \neq \mathfrak{u}\subseteq 1{:}s}\gamma_{\mathfrak{u}}^{\lambda} C_{\alpha,p}^{\lambda |\mathfrak{u}|}D_{\alpha,p,\lambda}^{|\mathfrak{u}|}\right)^{1/\lambda}\right] \\
    & = 1-\PP\left[ e(\mathcal{W}_{s, \alpha, p, \bm{\gamma}},P^*)> \inf_{1/(\alpha+p)<\lambda\le 1}\left(\frac{1}{(1-\delta)b^m}\sum_{\emptyset \neq \mathfrak{u}\subseteq 1{:}s}\gamma_{\mathfrak{u}}^{\lambda} C_{\alpha,p}^{\lambda |\mathfrak{u}|}D_{\alpha,p,\lambda}^{|\mathfrak{u}|}\right)^{1/\lambda}\right] \\
    & = 1-\prod_{j=1}^{r}\PP\left[ e(\mathcal{W}_{s, \alpha, p, \bm{\gamma}},P_r)> \inf_{1/(\alpha+p)<\lambda\le 1}\left(\frac{1}{(1-\delta)b^m}\sum_{\emptyset \neq \mathfrak{u}\subseteq 1{:}s}\gamma_{\mathfrak{u}}^{\lambda} C_{\alpha,p}^{\lambda |\mathfrak{u}|}D_{\alpha,p,\lambda}^{|\mathfrak{u}|}\right)^{1/\lambda}\right]\\
    & \ge 1-(1-\delta)^r,
\end{align*} 
for any $0<\delta<1$. This completes the proof.
\end{proof}

This result implies that a larger $r$ naturally increases the probability of success. However, a very small $r$ is often sufficient in practice. Choosing $r = \lceil \log(1/\epsilon) / \log(1/(1-\delta)) \rceil$ ensures a success probability of at least $1-\epsilon$. For instance, with $\delta=1/2$, $r=15$ is enough to guarantee a success probability exceeding $0.9999$ (specifically, $1-2^{-15} \approx 0.99997$).

\begin{remark}
    Although the worst-case error is only bounded probabilistically, the bound can be independent of the dimension $s$, if there exists a constant $\lambda\in (1/(\alpha+p),1]$ such that
    \[ \sum_{\substack{\emptyset \neq \mathfrak{u}\subseteq \natu\\ |\mathfrak{u}|<\infty}}\gamma_{\mathfrak{u}}^{\lambda} C_{\alpha,p}^{\lambda |\mathfrak{u}|}D_{\alpha,p,\lambda}^{|\mathfrak{u}|}<\infty. \]
    In the case of product weights, this condition can be simplified into
    \[ \sum_{j=1}^{\infty}\gamma_j^{\lambda}<\infty,\]
    which is always satisfied if the following holds:
    \[ \sum_{j=1}^{\infty}\gamma_j^{1/(\alpha+p)}<\infty.\]
\end{remark}

\begin{remark}
To make our greedy optimization feasible, we need a computable formula for either the worst-case error or its upper bound, as shown in \eqref{eq:worst-case_error_bound}. Let $P=\{\bsx_0,\ldots,\bsx_{b^m-1}\}$ be a fixed digital net in base $b$. In the case of product weights and $p=1$, the upper bound equals
\begin{align*}
    \sum_{\bm{k} \in \mathcal{D}^\perp \setminus \{\bm{0} \} }\gamma_{\supp(\bm{k})} C_{\alpha,1}^{|\supp(\bm{k})|} \, b^{-\mu_{\alpha+1}(\bm{k})} & = \sum_{\bm{k} \in \natu_*^s}\gamma_{\supp(\bm{k})} C_{\alpha,p}^{|\supp(\bm{k})|} \, b^{- \mu_{\alpha+1}(\bm{k})}\, \frac{1}{b^m}\sum_{i=0}^{b^m-1}\bwal{\bm{k}}(\bm{x}_i)\\
    & = -1+\frac{1}{b^m}\sum_{i=0}^{b^m-1}\prod_{j=1}^{s}\left[ 1+\gamma_j C_{\alpha,1}\omega_{\alpha+1}(x_{i,j}) \right],
\end{align*}
where
\[ \omega_{\alpha+1}(x) \coloneqq \sum_{k=1}^{\infty} b^{- \mu_{\alpha+1}(k)} \bwal{k_j}(x). \]
Baldeaux et al.~\cite{BDLNP12} studied how to compute $\omega_{\alpha+1}$ efficiently and also derived an explicit form in special cases. For instance, in the case of $b=2$, \cite[Corollary~1]{BDLNP12} shows that
\begin{align*}
    \omega_2(x) =s_1(x)+\tilde{s}_2(x),\quad \omega_3(x) =s_1(x)+s_2(x)+\tilde{s}_3(x),
\end{align*} 
where
\begin{alignat*}{2} 
    & s_1(x)=1-2x,& \quad & s_2(x)=1/3-2(1-x)x, \\
    & \tilde{s}_2(x)=(1-5t_1)/2-(a_1-2)x, & \quad & \tilde{s}_3(x)=(1-43t_1^2)/18+(5t_1-1)x+(a_1-2)x^2,
\end{alignat*}
with $a_1=t_1=0$ when $x=0$, and $a_1\coloneqq -\lfloor \log_2(x)\rfloor, t_1\coloneqq 2^{-a_1}$ for $0<x<1$. Thus, if $\alpha+1\in \{2,3\}$, the worst-case error bound can be computed with $\mathcal{O}(sN)$ costs.
\end{remark}

\section{Numerical Results}
\label{sec:numerical_results}

This section presents numerical results. We first report results for the median-of-means estimator based on HRD, followed by those obtained using the greedy optimization of HRD.

\subsection{Median-of-means}
Here we present numerical results from the median-of-means estimator studied in Subsection~\ref{sec:median-of-means-estimator}. 
In this subsection and the next, we consider the integrand introduced in~\cite{GSM24}:
\begin{equation}
    \label{eq:numerical_example}
    f(\bm{t}) = \prod_{j=1}^s \left[1 + \gamma_j \left( t_j^c - \frac{1}{1+c} \right)\right],
\end{equation}
and consider the exponential decay weights $\gamma_j = \exp(-\lceil c \rceil j)$. In the following, we fix $s = 50$ and present the numerical results for $c = 1.5$ and $c = 2.5$. It is straightforward to check that in both cases, $f \in \mathcal{W}_{s, \ceil{c}, \bm{\gamma}, 1/2}$.

Figure~\ref{fig:m24_s50_c15_c25} shows the squared error of the median-of-means estimator for the integrand~\eqref{eq:numerical_example}, where the median is taken over 15 independent replicates. 
Since the effective dimension is low due to the rapid decay of the weights, the linearly scrambled Sobol' sequence performs best among the three methods. Moreover, HRD achieves substantially smaller errors than URD, which is consistent with its smaller probability bound for certain events that dominate the integration error. All three methods exhibit the theoretically predicted high-probability convergence rates. 

\begin{figure}[htbp]
    \centering
    \includegraphics[width=0.485\linewidth]{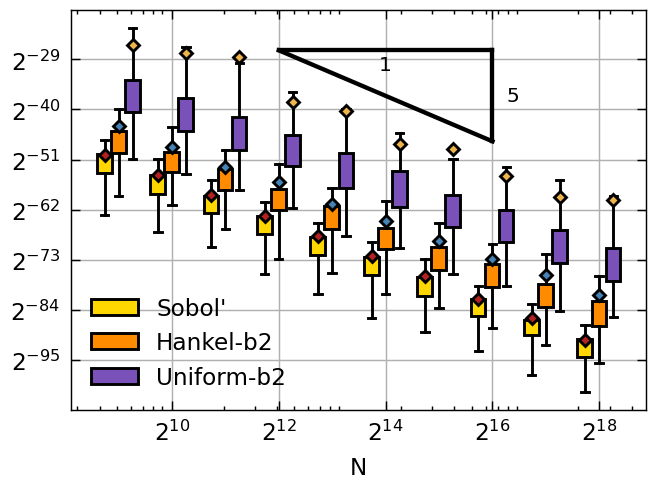}
    \includegraphics[width=0.485\linewidth]{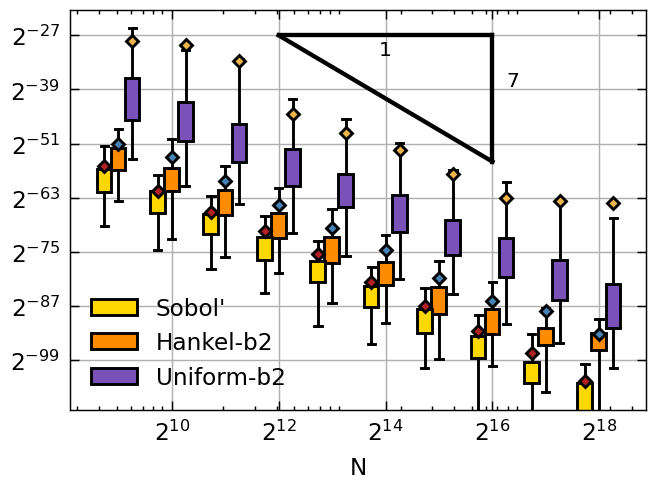}
    \caption{Squared error of the median estimator based on 15 randomized quasi-Monte Carlo (RQMC) samples per replicate. Each boxplot is constructed from 448 samples. Left: $c = 1.5$; right: $c = 2.5$. }
    \label{fig:m24_s50_c15_c25}
\end{figure}

Next, we provide the numerical results for different values of the base $b$ in the HRD in Figure~\ref{fig:m24_s50_c15_c25_b2357}, where the median estimator is taken over 15 independent replicates.  
\begin{figure}[htbp]
    \centering
    \includegraphics[width=0.485\linewidth]{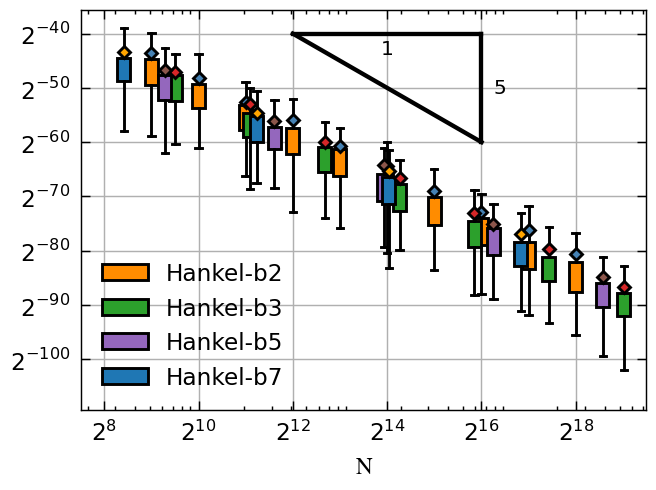}
    \includegraphics[width=0.485\linewidth]{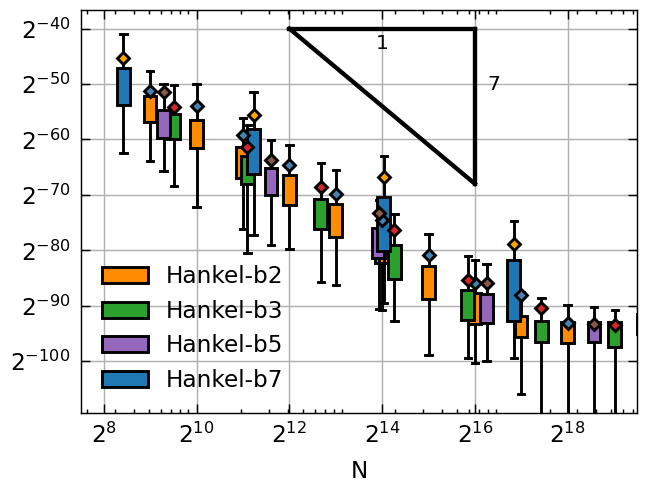}
    \caption{Squared error of the median estimator based on 15 randomized quasi-Monte Carlo (RQMC) samples per replicate. Each boxplot is constructed from 448 samples. Left: $c = 1.5$; right: $c = 2.5$.  }
    \label{fig:m24_s50_c15_c25_b2357}
\end{figure}
The errors of the median-of-means estimators corresponding to all tested values of $b$ exhibit the same convergence rate, and their decay curves largely overlap. This may suggest that the choice of $b$ has only a minor effect on the performance of HRD in this example. 

To further assess the robustness of HRD, we present additional numerical results for isotropic integrands in Appendix~\ref{sec:mom_numex}.

\subsection{Greedy optimization of Hankel designs}

In this subsection, we present the numerical results of the greedy optimization approach proposed in Subsection~\ref{sec:optimization_hankel}. The integrand is the same as that considered in the previous section, namely~\eqref{eq:numerical_example} with $s = 50$. 

First, we compare the computable worst-case error bound between HRD and URD. 
\begin{figure}[htbp] 
    \centering
    \begin{subfigure}[b]{0.48\textwidth} 
        \includegraphics[width=\textwidth]{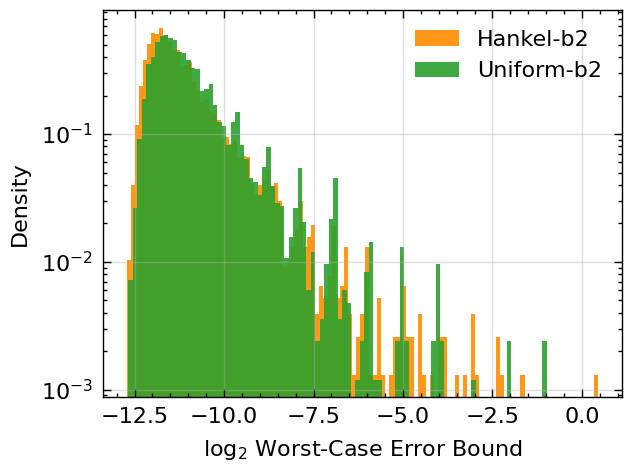}
        \caption{$r = 1, \alpha = 2$}
        \label{fig:wce_r1_a2}
    \end{subfigure}
    \hfill 
    \begin{subfigure}[b]{0.48\textwidth}
        \includegraphics[width=\textwidth]{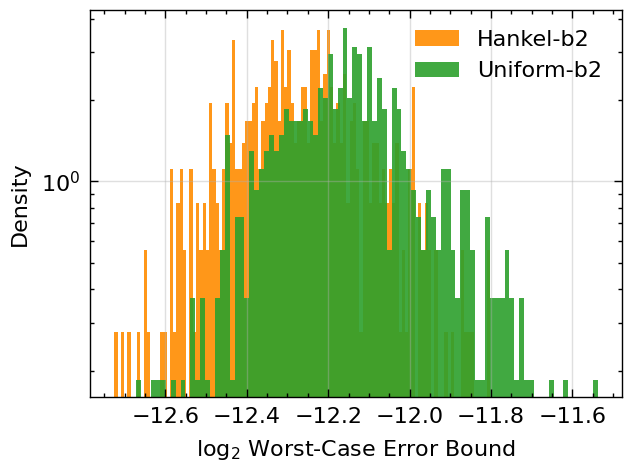}
        \caption{$r = 15, \alpha = 2$}
        \label{fig:wce_r15_a2}
    \end{subfigure}
    
    \vspace{0.5em} 
    
    \begin{subfigure}[b]{0.48\textwidth}
        \includegraphics[width=\textwidth]{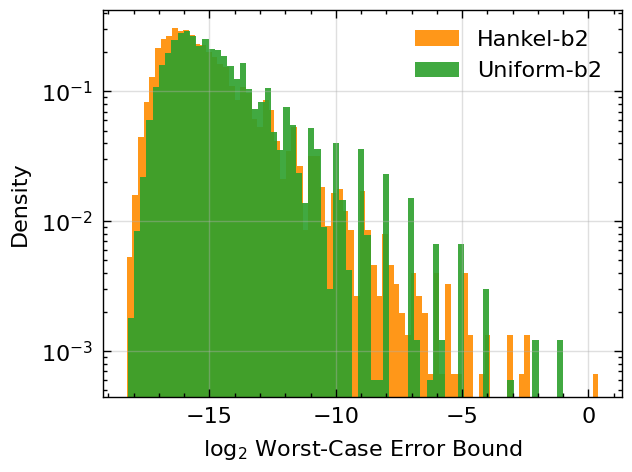}
        \caption{$r = 1, \alpha = 3$}
        \label{fig:wce_r1_a3}
    \end{subfigure}
    \hfill
    \begin{subfigure}[b]{0.48\textwidth}
        \includegraphics[width=\textwidth]{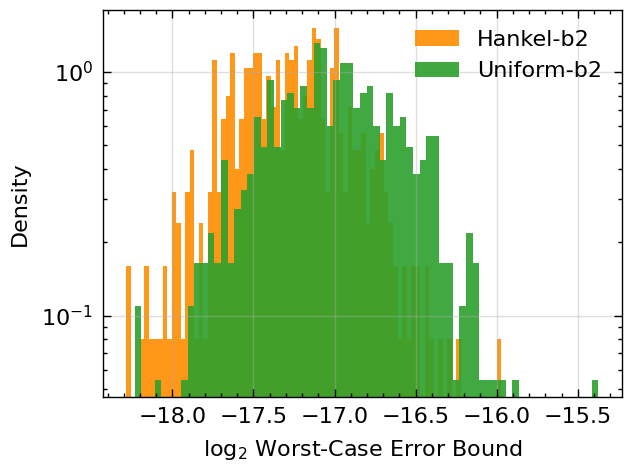}
        \caption{$r = 15, \alpha = 3$}
        \label{fig:wce_r15_a3}
    \end{subfigure}
    
    \caption{Histogram of the computable worst-case error bound for $s = 50$. The upper row corresponds to $\alpha = 2$, and the lower row corresponds to $\alpha  = 3$. The left column ($r = 1$) plots the bounds for 6720 individual samples, and the right column ($r = 15$) plots the smallest bound in each batch of 15 samples, based on 448 batches. }
    \label{fig:wce_s50}
\end{figure}
Figure~\ref{fig:wce_s50} compares the computable worst-case error bounds between HRD and URD in the samplewise case ($r = 1$) and when selecting the best design from a batch of size $15$ ($r = 15)$, with $s = 50$. We use the computable  upper bound from~\cite{BDLNP12}. In the samplewise comparison, HRD has marginally smaller error bounds than URD. The advantage becomes more pronounced when selecting the best design from a batch, as observed in the right column.

In the following, we present the convergence results for the greedy optimization of HRD and compare them with those obtained by LMS applied to the Sobol' sequence. We present numerical results for $c = 1.5$ and $c = 2.5$, and for two types of weights $\bm{\gamma}$: the exponential decay weight $\gamma_j = \frac{1}{\exp(\ceil{c} j)}$ and the equal weight $\gamma_j = 1.0$.

\begin{figure}[htbp] 
    \centering
    \begin{subfigure}[b]{0.48\textwidth} 
        \includegraphics[width=\textwidth]{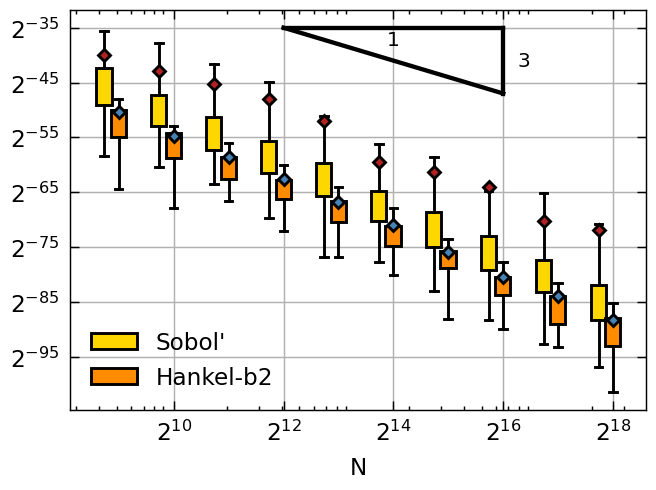}
        \caption{Mean, exponential weight}
        \label{fig:m18_s50_wce_mean_Hankel}
    \end{subfigure}
    \hfill 
    \begin{subfigure}[b]{0.48\textwidth}
        \includegraphics[width=\textwidth]{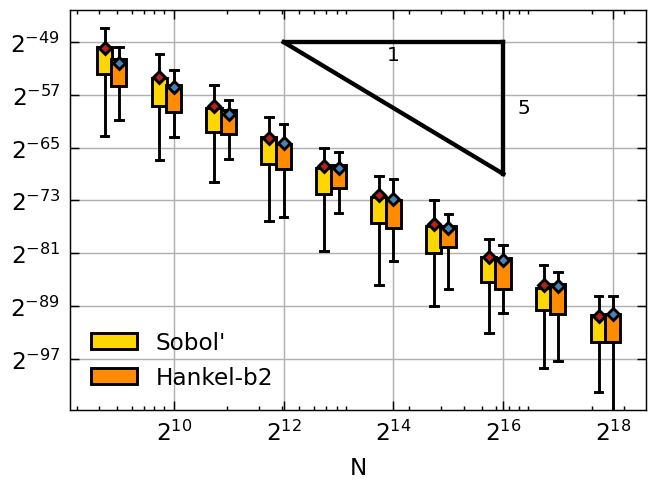}
        \caption{Median, exponential weight}
        \label{fig:m18_s50_wce_median_Hankel}
    \end{subfigure}
    \begin{subfigure}[b]{0.48\textwidth} 
        \includegraphics[width=\textwidth]{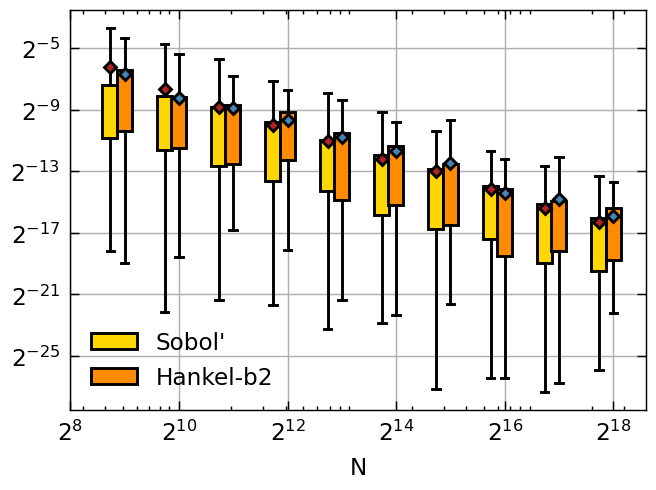}
        \caption{Mean, equal weight}
        \label{fig:m18_s50_wce_mean_Hankel}
    \end{subfigure}
    \hfill 
    \begin{subfigure}[b]{0.48\textwidth}
        \includegraphics[width=\textwidth]{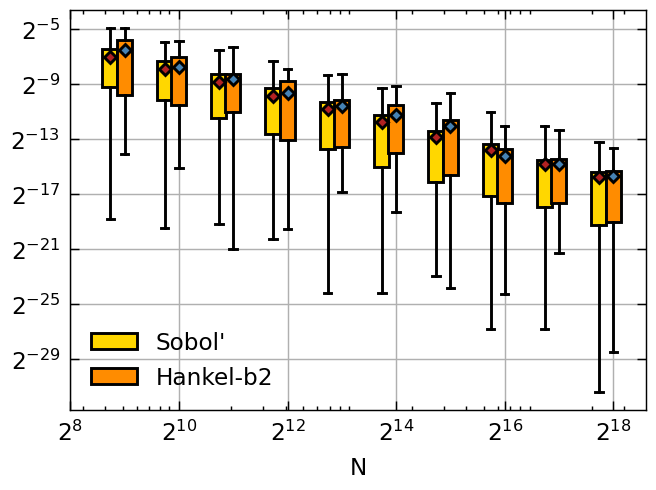}
        \caption{Median, equal weight}
        \label{fig:m18_s50_wce_median_Hankel}
    \end{subfigure}
    \vspace{0.5em} 
    \caption{Comparison between LMS+Sobol' sequence and the optimized HRD for $c = 1.5$ with exponential weight. Results are shown for both the mean and median estimators, with $r = \lceil m \log{m} \rceil$. Each boxplot is constructed from 448 samples. }
    \label{fig:Hankel_sobol_c15}
\end{figure}

We plot the squared errors of the mean and median estimators in Figure~\ref{fig:Hankel_sobol_c15}. Both the mean and median estimators are taken from a minibatch of size $r = \lceil m\log m \rceil$, as suggested in Remark~\ref{rem:choice_r}. We observe that, for the mean estimator, the greedy optimized Hankel design in base 2 significantly outperforms LMS+Sobol', yielding a reduction in mean squared error of more than a factor of $2^{10}$. For the median estimator, both methods exhibit nearly optimal convergence rates, with the optimized Hankel design remaining slightly superior. In the equal weight case, the effective dimension is high, and both the mean and median estimators do not reveal the asymptotic rate, with the LMS+Sobol' approach slightly outperforming.

\begin{figure}[htbp] 
    \centering
    \begin{subfigure}[b]{0.48\textwidth} 
        \includegraphics[width=\textwidth]{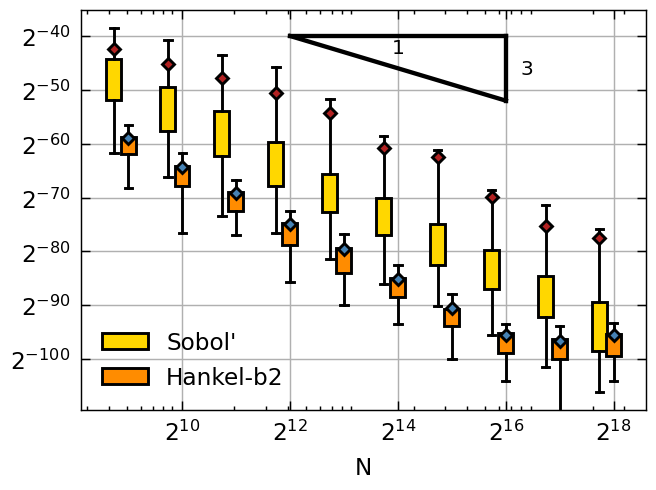}
        \caption{Mean, exponential weight}
        \label{fig:m18_s50_wce_mean_Hankel}
    \end{subfigure}
    \hfill 
    \begin{subfigure}[b]{0.48\textwidth}
        \includegraphics[width=\textwidth]{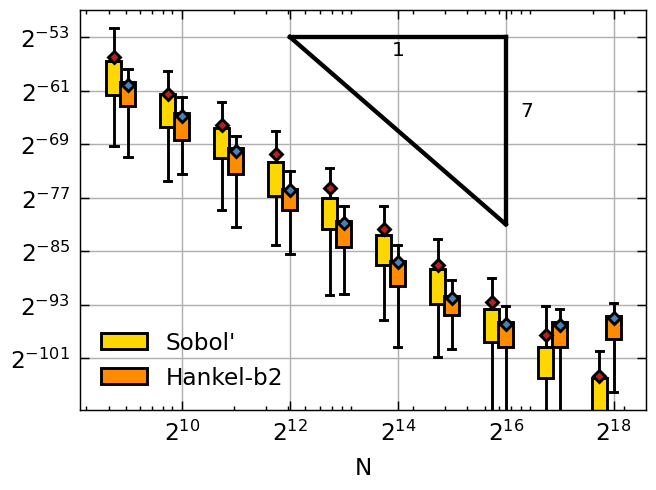}
        \caption{Median, exponential weight}
        \label{fig:m18_s50_wce_median_Hankel}
    \end{subfigure}
    \begin{subfigure}[b]{0.48\textwidth} 
        \includegraphics[width=\textwidth]{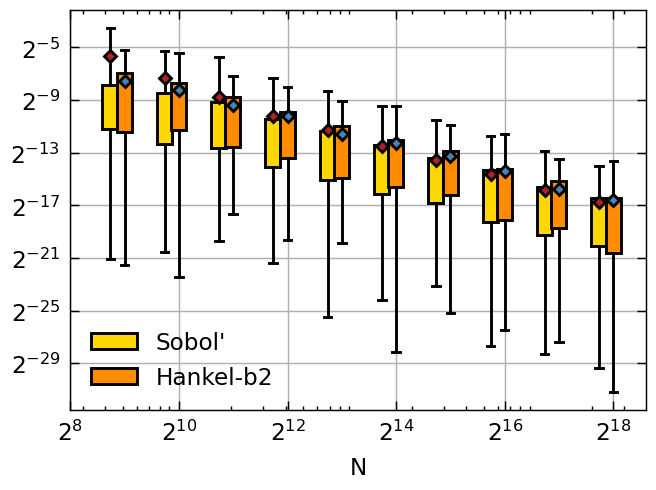}
        \caption{Mean, equal weight}
        \label{fig:m18_s50_wce_mean_Hankel}
    \end{subfigure}
    \hfill 
    \begin{subfigure}[b]{0.48\textwidth}
        \includegraphics[width=\textwidth]{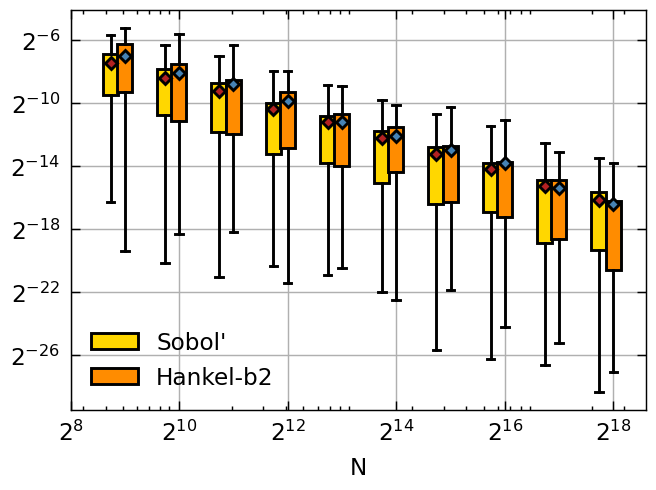}
        \caption{Median, equal weight}
        \label{fig:m18_s50_wce_median_Hankel}
    \end{subfigure}
    \vspace{0.5em} 
    \caption{Comparison between LMS+Sobol' sequence and the optimized HRD for $c = 2.5$ with exponential weight. Results are shown for both the mean and median estimators, with $r = \lceil m \log{m} \rceil$. Each boxplot is constructed from 448 samples.}
    \label{fig:Hankel_sobol_c25}
\end{figure}
Figure~\ref{fig:Hankel_sobol_c25} indicates that, in the case of exponential decay weight, as the function smoothness increases, the advantage of the optimized Hankel design over LMS+Sobol' becomes more pronounced. The gain in the mean squared error is around $2^{20}$. The optimized Hankel design also continues to outperform LMS+Sobol' in the median estimator setting. In the case of equal weight, however, both methods have similar performance. 

\section{Discussion}\label{sec:disc}

In this work, we propose a Hankel random digital net for QMC integration. The construction is not constrained by the maximal dimension and is flexible with respect to the choice of the prime base. We derive the decay of Walsh coefficients for functions in a weighted Sobolev space and analyze the convergence rate for both the median-of-means estimator and a greedy optimization procedure. The proposed HRD is shown to outperform previous randomized designs such as the URD, and the numerical results demonstrate improved performance compared with LMS combined with the Sobol' sequence. Meanwhile, in revisiting the properties of LMS, we also establish the variance equivalence between LMS and Owen-scrambling. 

Note that the cost of evaluating the worst-case error bound is $\mathcal{O}(b^m)$, which is the same as the cost of evaluating the integrands when sampling costs $\mathcal{O}(1)$. However, the proposed greedy optimization method is expected to be particularly efficient in more computationally intensive applications, such as when sampling involves solving differential equations, where the cost of optimization is negligible relative to the overall simulation cost. 

Finally, our numerical results indicate that the error convergence behavior is similar across different choices of base $b$. An interesting open question concerns the best choice of the base $b$, both from theoretical and computational perspectives.  



\bibliographystyle{plain}
\bibliography{ref}

\begin{thebibliography}{10}

\bibitem{BDLNP12}
Jan Baldeaux, Josef Dick, Gunther Leobacher, Dirk Nuyens, and Friedrich Pillichshammer.
\newblock Efficient calculation of the worst-case error and (fast) component-by-component construction of higher order polynomial lattice rules.
\newblock {\em Numer. Algorithms}, 59(3):403--431, 2012.

\bibitem{CE52}
K.~L. Chung and P.~Erd\"os.
\newblock On the application of the {B}orel-{C}antelli lemma.
\newblock {\em Trans. Amer. Math. Soc.}, 72:179--186, 1952.

\bibitem{Dic08}
Josef Dick.
\newblock Walsh spaces containing smooth functions and quasi-{M}onte {C}arlo rules of arbitrary high order.
\newblock {\em SIAM J. Numer. Anal.}, 46(3):1519--1553, 2008.

\bibitem{Dic11}
Josef Dick.
\newblock Higher order scrambled digital nets achieve the optimal rate of the root mean square error for smooth integrands.
\newblock {\em Ann. Statist.}, 39(3):1372--1398, 2011.

\bibitem{DKLNS14}
Josef Dick, Frances~Y. Kuo, Quoc~T. Le~Gia, Dirk Nuyens, and Christoph Schwab.
\newblock Higher order {QMC} {P}etrov-{G}alerkin discretization for affine parametric operator equations with random field inputs.
\newblock {\em SIAM J. Numer. Anal.}, 52(6):2676--2702, 2014.

\bibitem{DP07}
Josef Dick and Friedrich Pillichshammer.
\newblock Strong tractability of multivariate integration of arbitrary high order using digitally shifted polynomial lattice rules.
\newblock {\em J. Complexity}, 23(4-6):436--453, 2007.

\bibitem{DP10}
Josef Dick and Friedrich Pillichshammer.
\newblock {\em Digital Nets and Sequences--Discrepancy Theory and Quasi-{M}onte {C}arlo Integration}.
\newblock Cambridge University Press, Cambridge, 2010.

\bibitem{God15}
Takashi Goda.
\newblock Good interlaced polynomial lattice rules for numerical integration in weighted {W}alsh spaces.
\newblock {\em J. Comput. Appl. Math.}, 285:279--294, 2015.

\bibitem{God16}
Takashi Goda.
\newblock Quasi--{M}onte {C}arlo integration using digital nets with antithetics.
\newblock {\em J. Comput. Appl. Math.}, 304:26--42, 2016.

\bibitem{GD15}
Takashi Goda and Josef Dick.
\newblock Construction of interlaced scrambled polynomial lattice rules of arbitrary high order.
\newblock {\em Found. Comput. Math.}, 15(5):1245--1278, 2015.

\bibitem{GK26}
Takashi Goda and David Krieg.
\newblock A simple universal algorithm for high-dimensional integration.
\newblock {\em Numer. Math.}, 158(1):229--248, 2026.

\bibitem{GL22}
Takashi Goda and Pierre L'Ecuyer.
\newblock Construction-free median quasi--{M}onte {C}arlo rules for function spaces with unspecified smoothness and general weights.
\newblock {\em SIAM J. Sci. Comput.}, 44(4):A2765--A2788, 2022.

\bibitem{GS23}
Takashi Goda and Kosuke Suzuki.
\newblock Improved bounds on the gain coefficients for digital nets in prime power base.
\newblock {\em J. Complexity}, 76:Paper No. 101722, 15, 2023.

\bibitem{GSM24}
Takashi Goda, Kosuke Suzuki, and Makoto Matsumoto.
\newblock A universal median quasi--{M}onte {C}arlo integration.
\newblock {\em SIAM J. Numer. Anal.}, 62(1):533--566, 2024.

\bibitem{GSY18}
Takashi Goda, Kosuke Suzuki, and Takehito Yoshiki.
\newblock Optimal order quadrature error bounds for infinite-dimensional higher-order digital sequences.
\newblock {\em Found. Comput. Math.}, 18(2):433--458, 2018.

\bibitem{Hel02}
Peter Hellekalek.
\newblock Digital {$(t,m,s)$}-nets and the spectral test.
\newblock {\em Acta Arith.}, 105(2):197--204, 2002.

\bibitem{HN03}
Fred~J. Hickernell and Harald Niederreiter.
\newblock The existence of good extensible rank-1 lattices.
\newblock {\em J. Complexity}, 19(3):286--300, 2003.

\bibitem{Hun76}
David Hunter.
\newblock An upper bound for the probability of a union.
\newblock {\em J. Appl. Probability}, 13(3):597--603, 1976.

\bibitem{JK08}
Stephen Joe and Frances~Y. Kuo.
\newblock Constructing {S}obol' sequences with better two-dimensional projections.
\newblock {\em SIAM J. Sci. Comput.}, 30(5):2635--2654, 2008.

\bibitem{Liu25}
Yang Liu.
\newblock Integrability of weak mixed first-order derivatives and convergence rates of scrambled digital nets.
\newblock {\em J. Complexity}, 89:Paper No. 101935, 8, 2025.

\bibitem{Mat98}
Ji\v{r}\'{i} Matou\v{s}ek.
\newblock On the {$L_2$}-discrepancy for anchored boxes.
\newblock {\em J. Complexity}, 14(4):527--556, 1998.

\bibitem{Nie86}
Harald Niederreiter.
\newblock Low-discrepancy point sets.
\newblock {\em Monatsh. Math.}, 102(2):155--167, 1986.

\bibitem{Nie87}
Harald Niederreiter.
\newblock Point sets and sequences with small discrepancy.
\newblock {\em Monatsh. Math.}, 104(4):273--337, 1987.

\bibitem{Nie92}
Harald Niederreiter.
\newblock {\em Random Number Generation and Quasi-{M}onte {C}arlo Methods}, volume~63 of {\em CBMS-NSF Regional Conference Series in Applied Mathematics}.
\newblock Society for Industrial and Applied Mathematics (SIAM), Philadelphia, PA, 1992.

\bibitem{Owe95}
Art~B. Owen.
\newblock Randomly permuted {$(t,m,s)$}-nets and {$(t,s)$}-sequences.
\newblock In {\em Monte {C}arlo and quasi-{M}onte {C}arlo methods in scientific computing ({L}as {V}egas, {NV}, 1994)}, volume 106 of {\em Lect. Notes Stat.}, pages 299--317. Springer, New York, 1995.

\bibitem{Owen97}
Art~B. Owen.
\newblock Monte {C}arlo variance of scrambled net quadrature.
\newblock {\em SIAM J. Numer. Anal.}, 34(5):1884--1910, 1997.

\bibitem{Pan25}
Zexin Pan.
\newblock Automatic optimal-rate convergence of randomized nets using median-of-means.
\newblock {\em Math. Comp.}, 95(359):1415--1446, 2026.

\bibitem{Pan26}
Zexin Pan.
\newblock Dimension-independent convergence rates of randomized nets using median-of-means.
\newblock {\em Math. Comp.}, 2026.

\bibitem{PO23b}
Zexin Pan and Art~B. Owen.
\newblock The nonzero gain coefficients of {S}obol's sequences are always powers of two.
\newblock {\em J. Complexity}, 75:Paper No. 101700, 16, 2023.

\bibitem{PO23}
Zexin Pan and Art~B. Owen.
\newblock Super-polynomial accuracy of one dimensional randomized nets using the median of means.
\newblock {\em Math. Comp.}, 92(340):805--837, 2023.

\bibitem{PO24}
Zexin Pan and Art~B. Owen.
\newblock Super-polynomial accuracy of multidimensional randomized nets using the median-of-means.
\newblock {\em Math. Comp.}, 93(349):2265--2289, 2024.

\bibitem{RT97}
M.~Yu. Rosenbloom and M.~A. Tsfasman.
\newblock Codes for the {$m$}-metric.
\newblock {\em Problemy Peredachi Informatsii}, 33(1):55--63, 1997.

\bibitem{Sob67}
I.~M. Sobol'.
\newblock Distribution of points in a cube and approximate evaluation of integrals.
\newblock {\em \v Z. Vy\v cisl. Mat i Mat. Fiz.}, 7:784--802, 1967.

\bibitem{SY16}
Kosuke Suzuki and Takehito Yoshiki.
\newblock Formulas for the {W}alsh coefficients of smooth functions and their application to bounds on the {W}alsh coefficients.
\newblock {\em J. Approx. Theory}, 205:1--24, 2016.

\end{thebibliography}

\appendix
\section{A fast implementation}
\label{sec:fast_implementation_digital_nets}
Here we describe the construction algorithm of the Hankel random digital net in 1-d. The case of multiple dimensions can be obtained by combining independent 1-d generators. The key idea is to use a Gray code to efficiently generate the sequence of points in a way that only one digit changes at each step. This allows us to update the point incrementally, which is computationally efficient. The algorithm is presented in Algorithm~\ref{alg:hrd_construction}.

\begin{algorithm}[H]
\label{alg:hrd_construction}
\DontPrintSemicolon
\SetKwFunction{GraySteps}{GrayStepsBase-$b$}
\SetKwFunction{HCols}{HankelColumnsMod-$b$}
\SetKwFunction{Gen}{GrayGen-$s$-Mod-$b$}

\caption{Construction of a 1-d Hankel random digital net}

\BlankLine
\textbf{Procedure} \GraySteps{$b,m$}:\;
\Indp
$a[0..m{-}1] \gets 0$,\quad $\mathrm{dir}[0..m{-}1] \gets +1$\;
\For{$n \gets 1$ \KwTo $b^m{-}1$}{
  \For{$t \gets 0$ \KwTo $m{-}1$}{
    \lIf{$0 \le a[t] + \mathrm{dir}[t] < b$}{
      $\mathrm{inc} \gets \mathrm{dir}[t]$;\quad
      $a[t] \gets a[t] + \mathrm{inc}$;\quad
      \For{$j \gets 0$ \KwTo $t{-}1$}{
        $\mathrm{dir}[j] \gets -\mathrm{dir}[j]$
      }
      \textbf{yield} $(t,\mathrm{inc})$;\quad \textbf{break}
    }
  }
}
\Indm
\BlankLine

\textbf{Procedure} \HCols{$b,m,\mathrm{RNG}$}:\;
\Indp
Draw $g[0..2m{-}2] \sim \mathrm{Unif}\{0,\dots,b{-}1\}$ using \(\mathrm{RNG}\)\;
\For{$t \gets 0$ \KwTo $m{-}1$}{
  \For{$i \gets 0$ \KwTo $m{-}1$}{
    $c_t[i] \gets g[i{+}t] \bmod b$
  }
}
\textbf{return} $\{c_t\}_{t=0}^{m-1}$\;
\Indm
\BlankLine

\textbf{Generator} \Gen{$b,m,\mathrm{hankel\_seed},\mathrm{shift\_seed}$ (optional)}:\;
\Indp
$C \gets$ \HCols{$b,m,\mathrm{RNG}(\mathrm{hankel\_seed})$};\quad
$w[i] \gets b^{-(i+1)}$ for $i = 0,\dots,m{-}1$\;
\lIf{\(\mathrm{shift\_seed}\) is \emph{None}}{$\delta \gets 0^m$}
\lElse{Draw $\delta[0..m{-}1] \sim \mathrm{Unif}\{0,\dots,b{-}1\}$ using \(\mathrm{RNG}(\mathrm{shift\_seed})\)}
$d \gets \delta$;\quad $s \gets \langle d, w\rangle$;\quad \textbf{yield} $s$ \tcp*{$s_0$}
\ForEach{$(t,\mathrm{inc})$ in \GraySteps{$b,m$}}{
  \lIf{$\mathrm{inc}=+1$}{$d' \gets (d + c_t) \bmod b$}
  \lElse{$d' \gets (d - c_t) \bmod b$}
  $s \gets s + \langle d' - d, w\rangle$;\quad $d \gets d'$;\quad \textbf{yield} $s$
}
\Indm
\end{algorithm}

We compare and summarize the storage and computational costs of HRD, URD and LMS for generating a $(t, m, s)$-net in base $b$ in Table~\ref{tab:comparison_storage_computational_cost}. 
\begin{table}[htbp]
    \centering
    \begin{tabular}{c | c c}
      Method & Storage  &  Cost \\ \hline
      HRD   & $\mathcal{O}(m+E - 1)$  & $\mathcal{O}(b^m)$ \\
      URD   & $\mathcal{O} (mE) $ & $\mathcal{O}(b^m)$ \\
      LMS+Sobol'   &  $\mathcal{O}(mE)$ & $\mathcal{O}(m^2 + b^m)$\\
    \end{tabular}
    \caption{The storage and cost of different randomization methods. The cost analysis is based on the Gray-code update approach.}
    \label{tab:comparison_storage_computational_cost}
\end{table}

\section{Proof of Lemma~\ref{lemma:W_linfty_bound}}
\label{sec:proof_W_linfty_bound}
\begin{proof}
    As we consider the case $k>0$, we have $\nu\ge 1$. From \cite[Propositions~3.5 and 3.7]{SY16}, it holds that
\begin{equation}
    \begin{aligned}
    \norm{W(k_{\alpha}^{+})}_{L^{\infty}} &= 2^{-\mu_{\alpha}(k_{\alpha}^{+})}, \quad b = 2,\\ 
    \norm{W(k_{\alpha}^{+})}_{L^{\infty}} &\leq \frac{b^{-\mu_{\alpha}(k_{\alpha}^{+})}}{m_b^{n_{\alpha}(k)}} \left( M_b + \frac{b m_b}{b - M_b} \left( 1 - \left(\frac{M_b}{b}\right)^{n_{\alpha}(k)} \right) \right), \quad b > 2,
    \end{aligned}
\end{equation}
with
\begin{equation}
    \begin{aligned}
        m_b &= 2 \sin(\pi / b),\\
        M_b &= \begin{cases}
            2 & \text{if $b$ is even},\\
            2\sin((b+1)\pi/2b) & \text{if $b$ is odd}.
        \end{cases}
    \end{aligned}
\end{equation}
We can find the following bounds for $m_b$ and $M_b$, when $b > 2$:
\begin{equation}
    \begin{aligned}
        4/b &\leq m_b \leq 2\pi/b,\\
        2(b-1)/b &\leq M_b \leq 2.
    \end{aligned}
\end{equation}
Using the bounds of $m_b$, $M_b$, we have when $b > 2$:
\begin{equation}
    \begin{split}
        \norm{W(k_{\alpha}^{+})}_{L^{\infty}} &\leq b^{-\mu_{\alpha}(k_{\alpha}^{+})} (4b^{-1})^{-n_{\alpha}(k)} (2 + 2\pi) \\
        &\leq \left(\frac{1 + \pi}{2} \right)  b^{n_{\alpha}(k)-\mu_{\alpha}(k_{\alpha}^{+})}.
    \end{split}
\end{equation}
This bound also holds when $b = 2$. Thus, the conclusion follows. 

\end{proof}

\section{Additional median-of-means numerical results}
\label{sec:mom_numex}

We present numerical results for two types of isotropic integrands to demonstrate the robustness of HRD. 
Our first example is $f(t) = \exp \left( \sum_{j=1}^s \Phi^{-1} (t_j) \right)$, where $\Phi^{-1}$ is the inverse cumulative distribution function of the normal distribution. This integrand is unbounded and has infinite Hardy--Krause variation. 

Figure~\ref{fig:m24_s2_25_50_lognorm} presents squared error of the median-of-means estimator for the integrand estimate of the first example, where the median is taken over 15 independent replicas. Due to the singularity of this example, the convergence rate do not exceed $\mathcal{O}(N^{-1})$. 
\begin{figure}[htbp]
    \centering
    \includegraphics[width=0.325\linewidth]{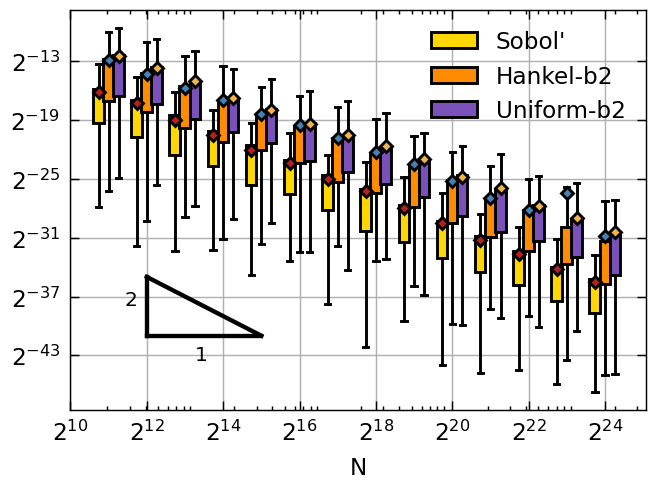}
    \includegraphics[width=0.325\linewidth]{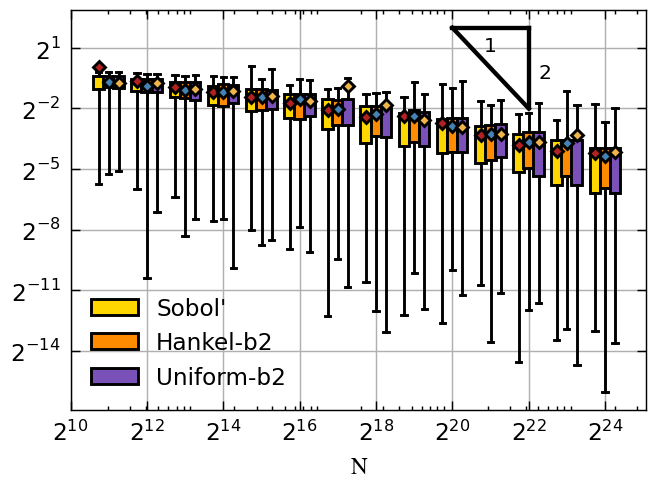}
    \includegraphics[width=0.325\linewidth]{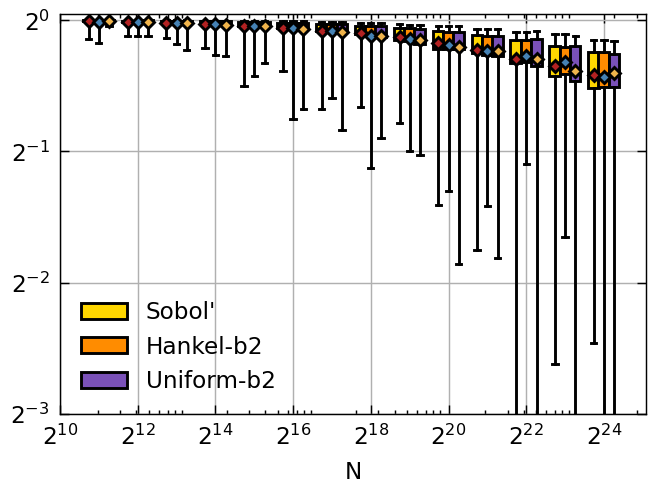}
    \caption{Squared error of the median estimator based on 15 randomized quasi-Monte Carlo (RQMC) samples. Each boxplot is constructed from 448 samples. From left to right, $s = 2, 25, 50$.}
    \label{fig:m24_s2_25_50_lognorm}
\end{figure}

The second example is $f(t) = \prod_{j=1}^s t_j \exp(t_j)$, which is in $C^{\infty}([0, 1)^s)$. In this example, the challenge arises in the variance $\var{f} = \left(\frac{e^2 - 1}{4} - 1\right)^s$, which grows exponentially with dimension $s$. Figure~\ref{fig:m24_s2_25_50_xexpx} presents the squared error of the median-of-means estimator for dimensions $s = 2, 25, 50$.

\begin{figure}[htbp]
    \centering
    \includegraphics[width=0.325\linewidth]{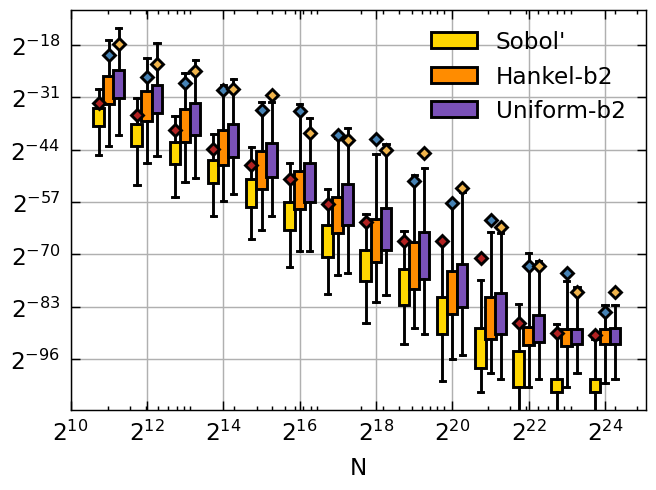}
    \includegraphics[width=0.325\linewidth]{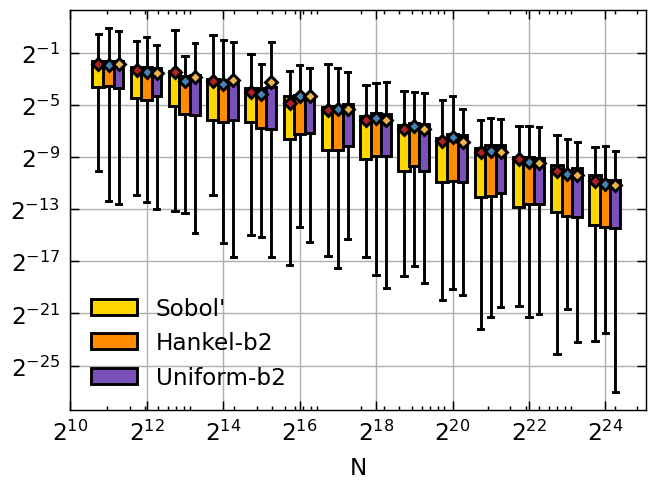}
    \includegraphics[width=0.325\linewidth]{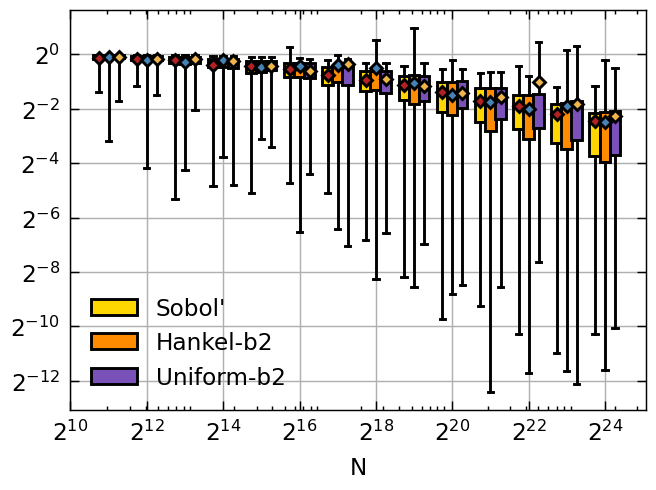}
    \caption{Squared error of the median estimator based on 15 randomized quasi-Monte Carlo (RQMC) samples. Each boxplot is constructed from 448 samples. From left to right, $s = 2, 25, 50$.}
    \label{fig:m24_s2_25_50_xexpx}
\end{figure}

We observe that, for both types of integrand, when $s = 2$, the Sobol' sequence achieves smaller error than HRD and URD. For $s = 25$ and $s = 50$, all three methods are in the pre-asymptotic regime and exhibit comparable errors. This observation further demonstrates the robustness of HRD. 


\end{document}